\newcommand{\fignote}[1]{\textcolor{red}{#1}}
\newtheorem{lemma}{Lemma}[section]
\newtheorem{example}[lemma]{Example}
\newtheorem{remark}[lemma]{Remark}
\newtheorem{proposition}[lemma]{Proposition}
\newtheorem{theorem}[lemma]{Theorem}
\newtheorem{corollary}[lemma]{Corollary}
\theoremstyle{remark}
\newtheorem{assumption}[lemma]{Assumption}
\DeclareMathOperator{\linspan}{span} 
\DeclareMathOperator{\diam}{diam} 
\DeclareMathOperator*{\supp}{supp}
\newcommand{\real}{\mathbb{R}}
\newcommand{\dualp}[1]{\left\langle #1 \right\rangle} 
\DeclareMathOperator{\ran}{Range} 
\newcommand{\param}{\mathcal{P}}
\newcommand{\parama}{\bar{\mathcal{P}}}
\newcommand{\paramb}{\hat{\mathcal{P}}}
\newcommand{\problems}{\textnormal{\textsc{problems}}}
\newcommand{\offfunc}{\textnormal{\textsc{offline}}}
\newcommand{\onfunc}{\textnormal{\textsc{online}}}
\newcommand{\ondata}{\textnormal{\textsc{onlinedata}}}
\newcommand{\instance}{P}
\newcommand{\instances}{\mathbb{P}}
\newcommand{\qi}{q}
\newcommand{\Qi}{Q}
\newcommand{\kset}{K}
\newcommand{\nwidth}{d}
\newcommand{\mwidth}{\delta}
\newcommand{\swidth}{\delta^\gamma}
\newcommand{\lwidth}{\delta^{\gamma,B}}
\newcommand{\entropy}{\epsilon}
\newcommand{\bentropy}{B}
\newcommand{\sm}{s_-}
\renewcommand{\sp}{s_+}
\renewcommand{\sb}{s_b}
\newcommand{\cf}{\mathcal{F}}
\newcommand{\cb}{\mathcal{B}}
\newcommand{\diml}{{\bar{d}}}
\newcommand{\rdom}{\Xi}
\newcommand{\mua}{\bar{\mu}}
\newcommand{\mub}{\hat{\mu}}
\newcommand{\bi}{V_-}
\newcommand{\bo}[1][]{V_{+#1}}
\newcommand{\flows}{\mathcal{B}}
\begin{document}

\title{Performance bounds for Reduced Order Models with Application to Parametric Transport}
\author{D. Rim\footnote{Department of Mathematics, Washington University in St. Louis, St. Louis, MO 63105, USA, \texttt{rim@wustl.edu}}, G. Welper\footnote{Department of Mathematics, University of Central Florida, Orlando, FL 32816, USA, \texttt{gerrit.welper@ucf.edu}}}
\date{}
\maketitle

\begin{abstract}

  The Kolmogorov $n$-width is an established benchmark to judge the performance of reduced basis and similar methods that produce linear reduced spaces. Although immensely successful in the elliptic regime, this width, shows unsatisfactory slow convergence rates for transport dominated problems. While this has triggered a large amount of work on nonlinear model reduction techniques, we are lacking a benchmark to evaluate their optimal performance.

  Nonlinear benchmarks like manifold/stable/Lipschitz width applied to the solution manifold are often trivial if the degrees of freedom exceed the parameter dimension and ignore desirable structure as offline/online decompositions. In this paper, we show that the same benchmarks applied to the full reduced order model pipeline from PDE to parametric quantity of interest provide non-trivial benchmarks and we prove lower bounds for transport equations.

\end{abstract}

\smallskip
\noindent \textbf{Keywords:} entropy, Lipschitz width, Kolmogorov width, reduced order models, transport equations

\smallskip
\noindent \textbf{AMS subject classifications:} 41A46, 41A25, 65N15

\section{Introduction}

Reduced order modeling seeks fast and reliable approximations of a quantity of interest $\qi(\mu)$ computed from the solution of a parametric PDE
\begin{align} \label{eq:ppde}
  \qi(\mu) & = \ell_\mu(u_\mu), &
  & \text{s.t.} & 
  F_\mu(u_\mu) & = 0, 
\end{align}
depending on parameters $\mu \in \param \subset \real^D$, with differential operator $F_\mu\colon U \to V$, continuous functional $\ell_\mu\colon U \to W$ and Banach spaces $U$, $V$ and $W$.

\subsection{Kolmogorov \texorpdfstring{$n$}{n}-width}

The Kolmogorov $n$-width is one of the most used benchmarks in reduced order modeling and measures the best possible approximation from a linear subspace $U_n \subset U$ for functions in a class $K \subset U$:
\begin{equation*}
  \nwidth_n(\kset) = \inf_{\dim U_n = n} \sup_{u \in K} \inf_{\phi \in U_n} \|u - \phi\|_U.
\end{equation*}
Applied to the solution manifold $\kset = \{u_\mu \mid \mu \in \param\} \subset U$ this provides a lower bound for all reduced order models that rely on linear approximation from some reduced space $U_n$, e.g. spanned by snapshots or POD modes. It is well known that greedy methods generate reduced spaces that asymptotically match the Kolmogorov $n$-width 
\cite{
  BinevCohenDahmenDeVorePetrovaWojtaszczyk2011,
  BuffaMadayPateraPrudhommeTurinici2012,
  DeVorePetrovaWojtaszczyk2013%
}
and that for many elliptic problems the width decreases exponentially
\cite{
  CohenDeVoreSchwab2010,
  CohenDeVoreSchwab2011,
  ChkifaCohenDeVoreSchwab2013,
  CohenDeVore2015%
}. 
For transport dominated problems, on the other hand, the situation is less satisfactory with Kolmogorov $n$-width 
\cite{
OhlbergerRave2016,
Welper2017,
ArbesGreifUrban2023%
}
\begin{align*}
  \nwidth_n(\kset) & \sim n^{-1/2}, & U & = L_2(\Omega) \\
  \nwidth_n(\kset) & \sim n^{-1}, &   U & = L_1(\Omega).
\end{align*}
This places severe restrictions on reduced order models that rely on linear approximation and is the motivation for several nonlinear methods. These include reduced order models with shifts or transforms 
\cite{
  Welper2017,
  ReissSchulzeSesterhenn2015,
  CagniartMadayStamm2019,
  CagniartCrisovanMadayEtAl2017,
  NairBalajewicz2017,
  Welper2019,
  KrahSrokaReiss2019,
  Taddei2020%
}
, optimal transport
\cite{
  IolloLombardi2014,
  RimMandli2018,
  EhrlacherLombardiMulaEtAl2019,
  RimPeherstorferMandli2023%
}
, neural networks
\cite{
LeeCarlberg2018,
KimChoiWidemannZohdi2020,
KimChoiWidemannZohdi2022,
Dahmen2023%
}
and other techniques
\cite{
  PeherstorferWillcox2015,
  Peherstorfer2018,
  GeelenWrightWillcox2023,
  BarnettFarhatMaday2023%
}
. To assess their performance, we need a corresponding nonlinear benchmark, which is the main focus of this work.

\subsection{Nonlinear Width and Entropy}
\label{sec:nonlinear-width}

\paragraph{Nonlinear Benchmarks}

For linear width, we approximate functions $u$ in a class $K$ by some linear subspace $U_n$, or equivalently $n$ coefficients in $\real^n$ for a suitable basis. In order to find benchmarks for adaptive approximation methods, we relax the linearity constraint: We project $u$ to a finite dimensional space $\real^n$ with a nonlinear encoder $E_n\colon K \to \real^n$ and then reconstruct it with a corresponding nonlinear decoder $D_n\colon \real^n \to U$, yielding the approximation $u \approx D_n \circ E_n(u)$. In addition, we require stability in form of continuity conditions and then search for the best encoder/decoder pair.
\begin{enumerate}

  \item The \emph{manifold width} \cite{DeVoreHowardMicchelli1989,DeVoreKyriazisLeviatanTikhomirov1993}, requires encoder and decoder pairs to be continuous
  \begin{equation*}
    \mwidth_n(\kset) = \inf_{\substack{D_n, \, E_n \\ \text{continuous}}} \sup_{u \in K} \|u - D_n \circ E_n (u)\|_U.
  \end{equation*}
  Although one is sometimes only interested in the continuity of the decoder $D_n$ or the composition $D_n \circ E_n$, the continuity of the encoder prevents degenerate decoders like space-filling curves.

  \item A refined variant is the \emph{stable manifold width} \cite{CohenDeVorePetrovaWojtaszczyk2022}, where the encoder and decoder are $\gamma$-Lipschitz continuous for some $\gamma \ge 1$
  \begin{equation*}
    \swidth_n(\kset) = \inf_{\substack{D_n, \, E_n, Y \\ \gamma-\text{Lipschitz}}} \sup_{u \in K} \|u - D_n \circ E_n (u)\|_U.
  \end{equation*}
  In order to have a well-defined Lipschitz constant, we also optimize over the space $Y = \real^n$ and in particular its norm $\|\cdot\|_Y$.

  \item Finally, we remove the encoder in favor of an optimal encoding from a bounded set to arrive at the \emph{Lipschitz width} \cite{PetrovaWojtaszczyk2021}
  \begin{equation*}
    \lwidth_n(\kset) = \inf_{\substack{D_n, Y \\ \gamma-\text{Lipschitz}}} \sup_{u \in K} \inf_{\|y\| \le B} \|u - D_n(y)\|_U.
  \end{equation*}

\end{enumerate}
Instead of encoding $u \in K$ by finite dimensional vectors in $\real^n$, we can also encode it by bit sequences of length $n$. Each such sequence $y_i$ is decoded to a single function $u_i$ and hence a guaranteed approximation error $\min_i \|u - u_i\|_U \le \epsilon$ yields an $\epsilon$ cover of $K$. Hence, the best possible error is given by the \emph{metric entropy}
\[
  \entropy_n(K) = \inf \left\{ \epsilon \, \middle| \, K\text{ is covered by }2^n\text{ balls of radius }\epsilon \right\}.
\]
In contrast to manifold width, this benchmark does not provide any stability guarantees but is often easier to compute. See Section \ref{sec:benchmarks} for comparisons.

Although these width are well studied in approximation theory, a straightforward application to reduced order models along the lines of the Kolmogorov $n$-width is problematic.

\paragraph{Problem I: Internal Representation of Encoder and Decoders}

The Kolmogorov $n$-width measures how well we can approximate $u \in K$ from $n$ dimensional linear spaces $U_n$, but provides no information on how to encode or store the linear spaces themselves. For classical approximation methods, like splines, finite elements or wavelets, these spaces are hard coded and hence have negligible cost. For reduced order models, these spaces are defined by their bases $\psi_i$, which in turn are high fidelity finite element PDE solutions, or similar, and thus have non-trivial cost themselves. However, the storage cost for the basis $\psi_i$ can be safely ignored in view of an offline/online decomposition, which only needs derived quantities like stiffness matrices $a(\psi_i, \psi_j)$ or functionals $\ell_\mu(\psi_i)$ of size $\mathcal{O}(n^2)$ to approximate quantities of interest in the online phase.

This online/offline decomposition is applicable to all linear spaces $U_n$ in the definition of the Kolmogorov $n$-width, so that it provides an informative lower bound. On the other hand online/offline decompositions are not known for arbitrary nonlinear encoder/decoder pairs in the definition of the non-linear width or entropy, and hence, it is not clear how much extra storage cost we need for $E_n$ and $D_n$ adapted to the solution manifold $K = \{u_\mu | \mu \in \param\}$ of a parametric problem.

\paragraph{Problem II: Trivial decoders}

For application to the solution manifold, the nonlinear widths tend to be too general. Indeed, for parameter space $\param \subset \real^D$ and $n \ge D$, a trivial decoder is the solution operator itself 
\begin{align*}
  D_n: \mu & \to u_\mu, &
  & \text{s.t.} &
  F_\mu(u_\mu) & = 0.
\end{align*}
A corresponding encoder is then a parameter estimation problem $E_n: u_\mu \to \mu$. For elliptic problems, the decoder is smoothing, even analytic 
\cite{
  CohenDeVoreSchwab2010,
  CohenDeVoreSchwab2011,
  ChkifaCohenDeVoreSchwab2013,
  CohenDeVore2015%
}
, giving rise to very favorable performance of reduced basis and related methods. On the flip side, the encoder or parameter estimation problem tends to be ill-posed and therefore cannot directly serve in the definition of the manifold width (depending on the choice of norms).

For transport dominated problems, though, the situation is different. The forward map is much less benign, resulting in many difficulties and dedicated methods in the literature. On the flip side, the encoder is often continuous, as can be seen in the following simple linear Riemann problem.

\begin{example}
  Consider the linear Riemann problem
  \begin{align*}
    F_\mu(u_\mu) = u_t + \mu u_x & = 0, & 0 < t & < T, \, x \in \real, \\
    u_\mu(0, x) & = g(x), & t & = 0, \, x \in \real,
  \end{align*}
  with parameter $\mu \in \param := [-1, 1]$ and Heaviside function $g(x) = 1$ for $x \ge 0$ and $g(x) = 0$ else. The parameter to solution map is $\mu \to u_\mu(t,x) = g(x - \mu t)$ and Lipschitz in $L_1$. We can recover the parameter by integration
  \[
    \mu = 2 - 2 \int_0^1 \int_{-1}^1 g(x - \mu t) \, dx \, dt
  \]
  yielding a Lipschitz solution to parameter encoder.
  
\end{example}
Hence, for many problems, we can choose the solution map as encoder and the parameter estimation as decoder resulting in zero manifold width
\begin{align*}
  \mwidth_n(K) 
  = \mwidth_n^\gamma(K)
  = \mwidth_n^{\gamma,B}(K) & = 0, &
  & \text{for all} & 
  n & \ge D, &
  \param \subset \real^D.
\end{align*}
Of course this encoder/decoder pair is uninteresting for reduced order modeling because it involves full system solves. Indeed, the nonlinear widths ignore any computational time and space constraints and are therefore only partially informative.

\paragraph{Other Benchmarks}

Besides the manifold, stable and Lipschitz width, there are other benchmarks for nonlinear approximation, often tailored to specific classes of approximation methods. The nonlinear Kolmogorov width \cite{Temlyakov1998} provides optimal best $m$-term approximation, \cite{Donoho2001} considers optimal dictionary approximation and the $(M,N)$-width \cite{RimPeherstorferMandli2023} provides optimal bounds for approximation from transported subspaces.

\subsection{New Contributions}

As we have seen, a simple replacement of the Kolmogorov $n$-width with established nonlinear width entails several difficulties. However, they are avoided by a shift in perspective from the approximation of the solution manifold, or solution to parameter map, to the full reduced order model pipeline:
\begin{center}
  \begin{tikzpicture}

    \node[draw,rounded corners] (pde) at (0,0) {PDE};
    \node[draw,rounded corners] (data) at (4,0) {Data};
    \node[draw,rounded corners] (qi) at (8,0) {$\mu \to \qi(\mu)$};

    \draw[thick,->] (pde) -- (data) node[midway,above,align=left] {Offline\\Phase};
    \draw[thick,->] (data) -- (qi) node[midway,above,align=left] {Online\\Phase};

  \end{tikzpicture}
\end{center}
Given a PDE, the offline phase computes some data, usually reduced stiffness matrices and load vectors. Given this data, the online phase provides an approximation of the quantity of interest $\qi(\mu)$ for any parameter $\mu$. In this view, the offline and online phase naturally serve as an encoder/decoder pair from a class of parametric PDEs to the parameter to quantity of interest map. Note that the input to the decoder is a class of PDEs, e.g. all linear transport equations with controlled smoothness of inflow boundary condition, PDE coefficients and quantity of interest, not one single PDE as for the definition of the solution manifold.

In Section \ref{sec:benchmarks}, we use this new perspective to define manifold, stable and Lipschitz width. In Section \ref{sec:transport}, we apply them to the parametric transport PDE
\begin{align*}
  \qi(\mu) & = \int_{\Gamma_+} g_+ u_\mu, &
  b_\mu \nabla u_\mu & = 0, &
  u_\mu|_{\Gamma_{-}} & = g_-
\end{align*}
with $\sm$ smooth inflow boundary condition $g_-$, $\sp$ smooth quantity of interest $g_+$ and $\sb$ smooth flow field, depending on $D+d-2$ dimensional parameters. We show that the Lipschitz width satisfies the lower bound
\begin{align*}
  \mwidth_n^{\gamma,1}
  & = \Omega(n^{-\alpha}), &
  \alpha & < n^{-\frac{\sm+\sp+ (d-1)}{\max\{D/\sb, d-2\}}}
\end{align*}
for any $\alpha$ that satisfies the given constraint. For motivation and context, Sections \ref{sec:elliptic-rb} and \ref{sec:elliptic} contain some comparison to elliptic problems.

\subsection{Notations}

Throughout the paper, we use $\lesssim$, $\gtrsim$ and $\sim$ for less, bigger and equivalent up to constants that can change in every occurrence and are independent of dimensions and smoothness.

\section{Offline/Online Decomposition}
\label{sec:elliptic-rb}

Before we consider nonlinear width for the full reduced order model pipeline, we first briefly review the offline/online decomposition of a typical reduced basis method. We are interested in many parameter queries of a quantity of interest 
\begin{equation*} 
  \qi(\mu) = \ell(u_\mu)
\end{equation*}
for some functional $\ell \in U' := H^{-1}(\Omega)$ with solution $u_\mu \in U := H^1_0(\Omega)$ given by an elliptic variational problem
\begin{equation*} \label{eq:elliptic}
  \begin{aligned}
    a_\mu(u_\mu, v) & 
    = \sum_{q=1}^Q \theta_q(\mu) a_q(u_\mu, v)
    = f(v) & \text{for all } v & \in H^1_0(\Omega),
  \end{aligned}
\end{equation*}
with $f \in U'$ and an affine decomposition of the parameter dependence. Given a reduced basis
\[
  U_n = \operatorname{span} \{ \psi_1, \dots, \psi_n \} \subset U,
\]
we first approximate $u_\mu$ in $U_n$ with a Galerkin method and then evaluate the functional $\ell$ of the approximation. Using the linearity and affine decomposition, it is easy to see that this yields
\begin{align*}
  \ell(u_\mu) & \approx \boldsymbol{\ell}^T \boldsymbol{u}_\mu, &
  \left(\sum_{q=1}^Q \theta_q(\mu) A_q\right) \boldsymbol{u}_\mu & = \boldsymbol{f},
\end{align*}
or equivalently
\begin{equation} \label{eq:elliptic-online}
  \ell(u_\mu) \approx \boldsymbol{\ell}^T \left( \sum_{q=1}^Q \theta_q(\mu) A_q \right)^{-1} \boldsymbol{f}
\end{equation}
with stiffness matrix and vectors
\begin{align} \label{eq:online-store}
  (A_q)_{ij} & = a_q(\psi_i, \psi_j), &
  \boldsymbol{f}_i & = f(\psi_i), &
  \boldsymbol{\ell}_i & = \ell(\psi_i), &
  i,j & = 1, \dots, n.
\end{align}
In particular, the basis functions $\psi_i$ themselves are not necessary to approximate the output $\mu \to \ell(u_\mu)$, only the $2n + Qn^2$ numbers in \eqref{eq:online-store}. Unravelling the inverse matrix with Cramer's rule, it is easy to see that this is a rational approximation of $\ell(u_\mu)$ in terms of $\theta_q(\mu)$ of degree $(n-1,n)$ in numerator and denominator. Thus, instead of asking how well we can approximate $u_\mu$ on the solution manifold by the Kolmogorov $n$-width, we can ask how good is the rational approximation of $\mu \to \ell(u_\mu)$?

\section{Benchmarks for Reduced Order Models}
\label{sec:benchmarks}

In the last section, we have seen that the online/offline decomposition of the reduced basis method provides a (rational) approximation of the quantity of interest $\qi(\mu) = \ell(u_\mu)$ from some stored matrices and vectors. How good is this approximation, or generally $\qi_{\text{reduced basis}}(\mu)$, $\qi_{\text{POD}}(\mu)$, $\qi_{\text{neural network}}(\mu)$, etc. versus some optimal benchmark? To obtain a meaningful question, we need to specify a norm in which we measure the error, a class of parametric PDEs for which we compute reduced order models and a precise definition what we consider as reduced order model to enter the competition.

\paragraph{Error Space}

We assume that the quantity of interest $\mu \to \qi(\mu)$ is contained in some Banach space $Z$. The space $Z = L_2(\param)$ is a natural choice for POD and $Z = L_\infty(\param)$ for reduced basis methods, which provide point-wise error bounds $|\qi(\mu) - \qi_{\text{reduced basis}}(\mu)|$. Note that $\mu$ is an independent variable and we ask how well we can approximate the function $\mu \to \qi(\mu)$, whereas the Kolmogorov $n$-width considers the difference $\inf_{u_n \in U_n} \|u_\mu - u_n\|_U$ for any fixed $\mu$. 

\paragraph{Compact Model Class $K$}

As for any other approximation method, to obtain meaningful error bounds, we need the quantities of interest $\qi \in Z$ to be confined to a compact set $K \subset Z$. Since we are interested in PDE solutions
\begin{align} \label{eq:ppde-class}
  \qi \colon \param & \to \real, &
  \qi(\mu) & = \ell_\mu(u_\mu), &
  & \text{s.t.} & 
  F_\mu(u_\mu) & = 0, 
\end{align}
for functional $\ell_\cdot\colon \param \times U \to \real$, given by $(\mu, u) \to \ell_\mu(u)$, and differential operator $F_\cdot\colon \param \times U \to V$, given by $(\mu, u) \to F_\mu(u)$, it is natural to define
\[
K := \{\mu \to \qi(\mu) = \ell_\mu(u_\mu) \,|\, F_\mu(u_\mu) = 0, \, (\ell, F) \in \problems \} \subset Z
\]
for a class  $\problems$ of parametric PDEs. Since the PDEs are typically given by initial or boundary conditions, PDE coefficients, etc., it is convenient to combine these into a secondary parameter $\instance \in \instances$ that defines each problem in $\problems$, i.e.
\[
\problems
= \{ (\ell_{\cdot, \instance}, F_{\cdot, P}) | P \in \instances \},
\]
for some $\ell_{\mu, \instance}$ and $F_{\mu, \instance}$ that depend on the parameter $\mu$ and a secondary parameter $\instance \in \instances$ to select a quantity of interest and PDE from $\problems$.

\begin{example} \label{ex:elliptic}
  Consider a class of elliptic PDEs, given in weak form by
  \begin{equation} \label{eq:1:ex:elliptic-2}
    \begin{aligned}
    \dualp{F_\mu(u), v} := (a_\mu \nabla u, \nabla v) - \dualp{f,v} & = 0 & & \text{for all } v \in H^1_0(\Omega) \\
      \qi(\mu) & = \ell(u_\mu)
    \end{aligned}
  \end{equation}
  with domain $\Omega \subset \real^d$, right hand side and quantity of interest $f, \ell \in H_0^{-1}(\Omega)$ and affine diffusion
  \begin{equation} \label{eq:2:ex:elliptic-2}
    a_\mu(x) = \bar{a}(x) + \sum_{q=1}^Q \mu_q \varphi_q(x),
  \end{equation}
  In order to ensure ellipticity, we assume that
  \begin{align*}
    2 & \le \bar{a} \le 3, &
    \sum_{1=q}^Q |\varphi_q(x)| & \le 1, &
    \mu & \in \param := [-1, 1]^Q.
  \end{align*}
  Then, we define the problem class
  \[
    \problems := \left\{ (\ell, F) \middle| \, F\text{ satisfies \eqref{eq:1:ex:elliptic-2}, \eqref{eq:2:ex:elliptic-2} with }\|f\|_{H^{-1}(\Omega)} \le 1, \, \|\ell\|_{H^{-1}(\Omega)} \le 1 \right\}.
  \]
  For fixed $\bar{a}$ this class has the natural parametrization $\instances$ 
  \begin{equation} \label{eq:elliptic-problem-class}
    \instances := \{\varphi \in B_{L_\infty}^Q\} \times \{f \in B_{H^{-1}}\} \times \{\ell \in B_{H^{-1}}\},
  \end{equation}
  where $B_\Box$ denotes a unit ball in space $\Box$. The first component contains the $Q$ functions $\varphi_q$, the second the right hand side $f$ and the third the quantity of interest $\ell$. 

\end{example}

The following example is of primary interest and will be discussed more carefully in Section \ref{sec:transport}.

\begin{example} \label{ex:transport}
  
  Consider the linear transport problems
  \begin{align}
    \qi(\mu) & = \ell(u_\mu) = \int_{\Gamma_+} g_+ u_\mu, &
    F_\mu(u_\mu) & = \left[ b_\mu \nabla u_\mu - f, \,\,\,\,  u_\mu|_{\Gamma_-} - g_- \right]
    = [0, 0]
  \end{align}
  with a given flow field $b_\mu$, right hand side $f$, inflow and outflow boundaries $\Gamma_-$ and $\Gamma_+$, initial value $g_-$ and quantity of interest $g_+$ on the outflow boundary. If we choose a parametric flow field $b_\mu$ for which the PDE has unique solutions, we obtain the class
  \[
    \problems = \left\{ (\ell, F) \, \middle| \,  \|g_-\|_{W^{\sm,p}(\Gamma_-)} \le 1, \, \|g_+\|_{W^{\sp,\infty}(\Gamma_+)} \le 1 \right\}
  \]
  parametrized by
  \begin{equation}
      \instances(b) = \{g_- \in B_{W^{\sm,p}(\Gamma_-)}\} \times \{g_+ \in B_{W^{\sp,\infty}(\Gamma_+)}\} 
  \end{equation}
  where again $B_\Box$ denotes a unit ball in $\Box$. Classes with variable $b$ are considered in Section \ref{sec:transport}.
\end{example}

In the reminder of the paper, we will only work with the parametrization $\instances$ of the PDE and the quantity of interest $\qi \in K \subset Z$. The particular structure \eqref{eq:ppde-class} how $\instance \in \instances$ gives rise to the quantity of interest $\qi$ is not important and therefore we generalize it to the map
\begin{align*}
  \Qi: \instances & \to Z, &
  \Qi(\instance) & = \mu \to \qi(\mu) = q(\cdot)
\end{align*}
that maps a problem instance, to the quantity of interest. Since we only work with the parametrization $\instances$, we do not distinguish it from $\problems$ and call both a \emph{problem class}. We also  refer to their respective elements $\instance \in \instances$ and $(\ell, F) \in \problems$ as a \emph{problems instance}. In conclusion, our compact set $K$ of quantities of interest becomes
\[
  K = \Qi(\instances) \subset Z.
\]

Roughly speaking, one may think of a problem class as all problems that can be addressed by one software implementation. There are many software packages for Example \ref{ex:elliptic} and others for Example \ref{ex:transport}, but usually not for both simultaneously as they have different parametrizations and stability conditions.

\paragraph{Reduced Order Models}

For the purpose of our benchmark, a \emph{reduced order model} consists of two functions 
\begin{equation*}
  \begin{aligned}
    \offfunc_n: & & \instances & \to \real^n \\
    \onfunc_n: & & \real^n & \to Z.
  \end{aligned}
\end{equation*}
The first function represents the offline phase and computes all data for later reconstruction combined into one single vector in $\real^n$, e.g. the matrices and vectors in \eqref{eq:online-store}. The second function represents the online phase and uses the stored data to compute an approximation of the parameter to quantity of interest map $\mu \to \qi(\mu) \in Z$. 

In the following, we consider the worst case performance of the reduced order model, i.e. the smallest $\epsilon > 0$ so that 
\begin{align*} 
  \|\Qi(P) - \onfunc_n \circ \offfunc_n (P)\|_Z & \le \epsilon, & 
  \forall P \in \instances. 
\end{align*}

\begin{example}

  For the reduced basis method in Section \ref{sec:elliptic-rb} and Example \ref{ex:elliptic}, we have:
  \begin{itemize}
    \item $\offfunc_n$: Given a problem instance $\instance = (\varphi, \ell, f)$ and a dimension $n$ choose the smallest dimension $m$ of the reduced basis so that $2m + m^2 \le n$. Compute the reduced basis $\psi_1, \dots, \psi_m \subset U$ with the greedy method and output the corresponding stiffness matrix and vectors in \eqref{eq:online-store} consisting of less than $n$ numbers by the choice of $m$.
    \item $\onfunc_n$: Given the output of the offline phase, approximate $\qi(\mu)$ by \eqref{eq:elliptic-online}.
  \end{itemize}

\end{example}

\paragraph{Nonlinear Width}

For any reduced order method as defined above, the offline and online phase naturally have the structure of an encoder and decoder. Hence, we can use them to define nonlinear width to benchmark reduced order methods. 

\begin{enumerate}
  \item Optimizing over all continuous $\offfunc_n$/$\onfunc_n$, we obtain the manifold width
  \[
    \mwidth_n(\instances) = \inf_{\substack{\offfunc_n \\ \onfunc_n \\ \text{continuous}}} \sup_{\instance \in \instances} \|\Qi(\instance) - \onfunc_n \circ \offfunc_n(\instance) \|_Z.
  \]
  \item Optimizing over all $\gamma$-Lipschitz encoder/decoders and norms $\|\cdot\|_Y$ for $\real^n$, we obtain the stable manifold width
  \[
    \mwidth_n^\gamma(\instances) = \inf_{\substack{\offfunc_n \\ \onfunc_n, Y \\ \gamma-\text{Lipschitz}}} \sup_{\instance \in \instances} \|\Qi(\instance) - \onfunc_n \circ \offfunc_n(\instance) \|_Z.
  \]
  \item With bounded output $\ondata = \offfunc_n(P)$ of the offline phase, we obtain the Lipschitz width
  \[
    \mwidth_n^{\gamma,B}(\instances) = \inf_{\substack{\onfunc_n, Y \\ \gamma-\text{Lipschitz}}} \sup_{\instance \in \instances} \inf_{\|\ondata\| \le B} \|\Qi(\instance) - \onfunc_n(\ondata) \|_Z.
  \]
\end{enumerate}

\begin{remark}
  All three width require a norm or metric on $\instances$ to properly define (Lipschitz) continuity. This provides another justification to replace the problem class $\problems$ with its parametrization $\instances$ because the latter typically has a natural metric as in Examples \ref{ex:elliptic} and \ref{ex:transport}.
\end{remark}

\begin{remark}
  The manifold and stable manifold widths are identical to the definitions in Section \ref{sec:nonlinear-width}, with one exception: The domain of the encoder is a parametrization $\instances$ of the class $\problems$ of parametric PDEs instead of functions in the class $K = \Qi(\instances)$. On the other hand, the definition of the Lipschitz width does not use the encoder and therefore carries over verbatim.
\end{remark}

An alternative benchmark, is the metric entropy of the class $K = \Qi(\instances)$ given by
\[
  \entropy_n(\Qi(\instances)) = \entropy_n(K) = \inf \left\{ \epsilon \middle| K\text{ is covered by }2^n\text{ balls of radius }\epsilon \right\}.
\]
While the widths measure how many real numbers we need to store in order to approximate $\qi \in \Qi(\instances) \subset Z$, the entropy measures how many bits we need for approximation. Indeed, we can equivalently define it similar to the Lipschitz width as
\[
  \bentropy_n(\Qi(\instances))
  = \bentropy_n(K) 
  = \inf_{D} \sup_{q \in K} \inf_{b \in \{0,1\}^n} \|q - D(b)\|,
\]
where the infimum is over all decoders that map bit sequences $\ondata \in \{0,1\}^n$ of length $n$ to approximations, without stability guarantees.

\begin{lemma}
  For any compact set $K$, we have $\entropy_n(K) = \bentropy_n(K)$.
\end{lemma}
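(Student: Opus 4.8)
The claim is that the bit-based benchmark $\bentropy_n(K)$ (optimal decoder from $\{0,1\}^n$) coincides with the classical metric entropy $\entropy_n(K)$ (smallest $\epsilon$ such that $K$ is covered by $2^n$ balls of radius $\epsilon$). I would prove this by establishing the two inequalities $\bentropy_n(K) \le \entropy_n(K)$ and $\entropy_n(K) \le \bentropy_n(K)$ separately, and the heart of the matter is handling infima that may not be attained.

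For $\bentropy_n(K) \le \entropy_n(K)$: fix any $\epsilon > \entropy_n(K)$. By definition of $\entropy_n$, there is a cover of $K$ by $2^n$ balls of radius $\epsilon$, with centers $c_1, \dots, c_{2^n} \in Z$ (centers can be taken in $Z$; if one insists on centers in $K$ a factor-of-2 argument handles it, but the balls-of-radius-$\epsilon$ definition as stated allows arbitrary centers). Enumerate the index set $\{1,\dots,2^n\}$ by bit strings via a bijection $\sigma\colon \{0,1\}^n \to \{1,\dots,2^n\}$ and set $D(b) := c_{\sigma(b)}$. Then for every $q \in K$ there is some $i$ with $\|q - c_i\| \le \epsilon$, hence $\inf_{b} \|q - D(b)\| \le \epsilon$, so $\sup_{q\in K}\inf_b \|q-D(b)\| \le \epsilon$, giving $\bentropy_n(K) \le \epsilon$. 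Letting $\epsilon \downarrow \entropy_n(K)$ yields the inequality.

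For the reverse $\entropy_n(K) \le \bentropy_n(K)$: fix any decoder $D\colon\{0,1\}^n \to Z$ and let $\eta := \sup_{q\in K}\inf_{b}\|q-D(b)\|$. For any $\epsilon > \eta$, every $q \in K$ satisfies $\|q - D(b)\| \le \epsilon$ for at least one $b$, so the $2^n$ closed balls of radius $\epsilon$ centered at the points $D(b)$, $b \in \{0,1\}^n$, cover $K$; hence $\entropy_n(K) \le \epsilon$. Taking the infimum over $\epsilon > \eta$ gives $\entropy_n(K) \le \eta$, and then the infimum over all decoders $D$ gives $\entropy_n(K) \le \bentropy_n(K)$. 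Combining both directions yields equality.

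The only genuinely delicate point is the treatment of non-attained infima and the choice of open versus closed balls in the definition of $\entropy_n$; I expect this to be the main (minor) obstacle, and it is resolved cleanly by the standard device of working with $\epsilon > \entropy_n(K)$ (resp.\ $\epsilon > \eta$) and passing to the limit, as above. Compactness of $K$ is not strictly needed for these two inequalities — finiteness of the covers is forced by the cardinality constraint $2^n$ — but it does guarantee that $\entropy_n(K) < \infty$ so that both quantities are finite reals, which is why it is included in the hypothesis. One should also remark that the number $2^n$ is exactly $|\{0,1\}^n|$, which is what makes the correspondence between bit strings and ball indices exact rather than merely an inequality.
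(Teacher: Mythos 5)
Your proof is correct and follows essentially the same route as the paper's: identify the $2^n$ bit strings with the $2^n$ ball centers (decoder outputs) to get each inequality in turn. The only difference is that you handle possibly non-attained infima via $\epsilon > \entropy_n(K)$ and a limiting argument, whereas the paper simply speaks of an ``optimal'' cover and decoder; your version is a slightly more careful rendering of the same argument.
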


\begin{proof}

Let $q_i$, $i=1, \dots, 2^n$ be an optimal $\epsilon$ cover in the definition of the entropy $\entropy_n(K)$. Let $b \in \{0,1\}^n$ be the binary representation of $i$ and define $D(b) := q_i$. Then, by construction $\inf_{b \in \{0,1\}^n} \|q - D(b)\| \le \epsilon$ and thus $\bentropy_n(K) \le \entropy_n(K)$.

In the other direction, let $D: \{0,1\}^n$ be an optimal decoder in the definition of $\bentropy_n(K)$. For every bit sequence $b \in \{0,1\}^n$, let $i$ be the corresponding integer number and $q_i := D(b)$. Then $q_i$ builds an $\bentropy_n(K)$-cover of $K$ and thus $\entropy_n(K) \le \bentropy_n(K)$.
  
\end{proof}

\paragraph{Comparison of Benchmarks}

The recent papers \cite{CohenDeVorePetrovaWojtaszczyk2022,PetrovaWojtaszczyk2021,PetrovaWojtaszczyk2022} establish connections between the different widths and entropy. Several of the results directly carry over to our slight modifications for reduced order modeling. Since the definitions of entropy and Lipschitz width are unchanged from Section \ref{sec:nonlinear-width}, the paper \cite{PetrovaWojtaszczyk2021} shows that they have comparable asymptotics, if polynomial or exponential.

\begin{theorem}[{\cite[Corollary 4.8]{PetrovaWojtaszczyk2021}}] \label{th:entropy-lipwidth}
  Let $Z$ be a Banach space, let $K = \Qi(\instances) \subset Z$ be compact and assume that $\gamma$ is bigger than the diameter of $K$. Then
  \begin{enumerate}
    \item \emph{Polynomial rates:} For $\alpha > 0$ and $\beta \in \real$
    \begin{align*}
      \entropy_n(\Qi(\instances)) & = \mathcal{O} \left( \frac{[\log_2 n]^\beta}{n^\alpha} \right) &
      & \Rightarrow &
      \mwidth_n^{\gamma,1}(\instances) & = \mathcal{O} \left( \frac{[\log_2 n]^\beta}{n^\alpha} \right) &
      \\
      \entropy_n(\Qi(\instances)) & = \Omega \left( \frac{[\log_2 n]^\beta}{n^\alpha} \right) &
      & \Rightarrow &
      \mwidth_n^{\gamma,1}(\instances) & = \Omega \left( \frac{[\log_2 n]^\beta}{n^\alpha [\log_2 n]^\alpha} \right)
    \end{align*}

    \item \emph{Exponential rates:} For $0 < \alpha < 1$ and constants $c, c' > 0$
    \begin{align*}
      \entropy_n(\Qi(\instances)) & = \mathcal{O} \left( 2^{-c n^\alpha} \right) &
      & \Rightarrow &
      \mwidth_n^{\gamma,1}(\instances) & = \mathcal{O} \left( 2^{-c n^\alpha} \right)
      \\
      \entropy_n(\Qi(\instances)) & = \Omega \left( 2^{-c n^\alpha} \right) &
      & \Rightarrow &
      \mwidth_n^{\gamma,1}(\instances) & = \Omega \left( 2^{-c' n^{\alpha/(1-\alpha)}} \right)
    \end{align*}

  \end{enumerate}
  
\end{theorem}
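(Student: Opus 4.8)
The statement is the content of \cite[Corollary 4.8]{PetrovaWojtaszczyk2021}, transcribed into the present notation: there the metric entropy $\entropy_n$ and the Lipschitz width $\mwidth_n^{\gamma,1}$ are defined for an arbitrary compact subset of a Banach space, with no reference to any reduced-order-model structure, so the plan is to apply that corollary directly with the compact set taken to be $K = \Qi(\instances) \subset Z$; the standing hypothesis $\gamma > \diam K$ is exactly the one required there. For orientation we recall the two mechanisms behind the four implications.

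\emph{From covers to Lipschitz decoders (the $\mathcal{O}$-directions).} Given an optimal $\entropy_N(K)$-cover with centers $q_1, \dots, q_{2^N}$, one builds a $\gamma$-Lipschitz map $\onfunc_m \colon \{\|y\| \le 1\} \subset \real^m \to Z$, with $m$ a fixed multiple of $N$, whose image passes within $\entropy_N(K)$ of every center: informally one threads a piecewise-linear path through the centers inside a product of intervals of bounded diameter and rescales so the Lipschitz constant stays below $\gamma$, which is possible precisely because $\gamma > \diam K$ and the domain has bounded diameter. This gives $\mwidth_n^{\gamma,1}(K) \lesssim \entropy_{cn}(K)$ for a constant $c$ depending only on $\gamma$, and the regularity of polynomial and exponential decay then turns $\entropy_{cn}(K)$ into $\entropy_n(K)$ up to the advertised logarithmic or constant factors, yielding both $\mathcal{O}$-statements.

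\emph{From Lipschitz decoders to covers (the $\Omega$-directions).} Conversely, let $\onfunc_n \colon \{\|y\| \le 1\} \subset \real^n \to Z$ be $\gamma$-Lipschitz with $\sup_{q \in K} \inf_{\|y\| \le 1} \|q - \onfunc_n(y)\|_Z \le \epsilon$, where $\epsilon = \mwidth_n^{\gamma,1}(K)$. Covering the unit ball of $\real^n$ by at most $(3/r)^n$ balls of radius $r$ and pushing the centers through $\onfunc_n$ yields, by the Lipschitz estimate and the defining inequality, an $(\epsilon + \gamma r)$-cover of $K$; the choice $r = \epsilon/\gamma$ gives a $2\epsilon$-cover of cardinality $(3\gamma/\epsilon)^n$, hence $\entropy_M(K) \le 2\epsilon$ with $M = \lceil n \log_2(3\gamma/\epsilon) \rceil$. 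Substituting the assumed lower bound on $\entropy_M(K)$ and solving the resulting implicit inequality for $\epsilon$ — in the polynomial case $\log_2(1/\epsilon) \sim \log_2 n$, so $M \sim n \log_2 n$ and one loses the factor $[\log_2 n]^{-\alpha}$; in the exponential case the inequality forces $\log_2(1/\epsilon) \lesssim n^{\alpha/(1-\alpha)}$, which is the change of exponent $\alpha \mapsto \alpha/(1-\alpha)$ — produces the two $\Omega$-statements.

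\emph{Main obstacle.} Everything except the construction of the $\gamma$-Lipschitz decoder in the first part is routine bookkeeping with covering numbers of balls in $\real^n$ and with inverting the two rate functions. The genuinely delicate step, carried out in \cite{PetrovaWojtaszczyk2021}, is to route a Lipschitz map on a domain of bounded diameter through $2^N$ prescribed points while keeping the domain dimension linear in $N$ and the Lipschitz constant below the fixed threshold $\gamma$; it is precisely there that $\gamma > \diam K$ enters.
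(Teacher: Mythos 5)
Your proposal is correct and matches the paper's treatment: the paper proves nothing beyond observing that the Lipschitz width (and entropy) definitions are unchanged from the standard setting — the encoder plays no role — so \cite[Corollary 4.8]{PetrovaWojtaszczyk2021} applies verbatim to $K=\Qi(\instances)$, which is exactly your first step. Your sketch of the two mechanisms (threading a Lipschitz decoder through cover centers, and pushing a cover of the unit ball of $\real^n$ through the decoder) is a faithful outline of the cited argument and is consistent with the stated rates, so no further comparison is needed.
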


Since the Lipschitz width does not pose any restrictions on the encoder, it is simple to compare Lipschitz and stable width.

\begin{theorem}[{\cite[Theorem 6.1]{PetrovaWojtaszczyk2021}}]

  Let $\Qi(\instances)$ be compact, $n \ge 1$ and $\gamma \ge 0$. Then
  \[
    \mwidth_n^{\gamma^2 \diam(\instances), 1}(\instances)
    \le \mwidth_n^\gamma(\instances).
  \]
  
\end{theorem}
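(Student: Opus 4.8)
The plan is to show that any $\gamma$-Lipschitz decoder realizing the stable manifold width can be converted into a $\gamma^2\diam(\instances)$-Lipschitz decoder (with a suitable encoder being irrelevant for the Lipschitz width, since it drops the encoder entirely). The key observation is that the stable manifold width already provides a $\gamma$-Lipschitz pair $(\offfunc_n, \onfunc_n)$ together with a norm $\|\cdot\|_Y$ on $\real^n$, and that $\offfunc_n(\instances) \subset Y$ is a bounded set because $\offfunc_n$ is $\gamma$-Lipschitz on the bounded set $\instances$: indeed for any fixed $P_0 \in \instances$ we have $\|\offfunc_n(P)\|_Y \le \|\offfunc_n(P_0)\|_Y + \gamma\,\mathrm{dist}(P,P_0) \le \|\offfunc_n(P_0)\|_Y + \gamma\diam(\instances)$. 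So up to a harmless translation the image of the offline phase lands in a ball of radius roughly $\gamma\diam(\instances)$, and the online phase is already a Lipschitz decoder on that ball.

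The steps I would carry out, in order. First, fix $\varepsilon > \mwidth_n^\gamma(\instances)$ and pick a $\gamma$-Lipschitz pair $(\offfunc_n,\onfunc_n)$ and norm $\|\cdot\|_Y$ achieving error at most $\varepsilon$. Second, bound the diameter of the image $\offfunc_n(\instances)$ in $Y$ by $\gamma\diam(\instances)$ as above, and translate so that it is contained in the ball $\{\|y\|_Y \le \gamma\diam(\instances)\}$ — translation does not change the Lipschitz constant of $\onfunc_n$ if we correspondingly compose $\offfunc_n$ with the same translation. Third, rescale: we want a decoder on the unit ball (since the Lipschitz width uses bound $B=1$), so set $D(y) := \onfunc_n\big(\gamma\diam(\instances)\,y\big)$ on $\{\|y\|_Y \le 1\}$. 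This $D$ is Lipschitz with constant $\gamma \cdot \gamma\diam(\instances) = \gamma^2\diam(\instances)$, because $\onfunc_n$ has Lipschitz constant $\gamma$ and the rescaling multiplies distances by $\gamma\diam(\instances)$. Fourth, verify the approximation bound: for each $P\in\instances$, the (translated, rescaled) offline output $y_P := \offfunc_n(P)/(\gamma\diam(\instances))$ satisfies $\|y_P\|_Y \le 1$ and $D(y_P) = \onfunc_n(\offfunc_n(P))$, so $\inf_{\|y\|\le 1}\|\Qi(P) - D(y)\|_Z \le \|\Qi(P) - \onfunc_n\circ\offfunc_n(P)\|_Z \le \varepsilon$. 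Taking the infimum over decoders gives $\mwidth_n^{\gamma^2\diam(\instances),1}(\instances) \le \varepsilon$, and letting $\varepsilon \downarrow \mwidth_n^\gamma(\instances)$ finishes the proof.

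The main thing to be careful about — rather than a genuine obstacle — is the bookkeeping of norms on $Y=\real^n$: the definitions optimize over the norm $\|\cdot\|_Y$, so one must make sure that the translation and rescaling are compatible with using the \emph{same} norm throughout, and that the Lipschitz constant of $\onfunc_n$ with respect to $\|\cdot\|_Y$ is what gets multiplied by the scaling factor. A minor subtlety is the degenerate case $\diam(\instances)=0$ (a single point), where the statement is trivially true, and the case $\gamma = 0$, where all Lipschitz maps are constant; these should be dispatched in a line. Otherwise the argument is a routine translate–rescale composition, exactly mirroring the proof of the analogous inequality in \cite{PetrovaWojtaszczyk2021}, since our definition of the Lipschitz width is unchanged from theirs and the stable manifold width differs only in the domain of the encoder, which plays no role here.
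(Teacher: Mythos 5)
Your argument is correct and is essentially the proof the paper relies on: the paper simply notes that the translate--rescale argument of \cite[Theorem 6.1]{PetrovaWojtaszczyk2021} carries over verbatim (the only change being that the encoder's domain is $\instances$ rather than $K=\Qi(\instances)$), and your write-up reproduces exactly that argument, with the image bound $\gamma\diam(\instances)$, the rescaling to the unit ball, and the resulting Lipschitz constant $\gamma^2\diam(\instances)$. No gaps beyond the degenerate cases you already dispatch.
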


\begin{proof}

The only difference to the standard width is that the domain of the encoder is $\instances$ instead of $K = \Qi(\instances)$. This does not change the proof in \cite[Theorem 6.1]{PetrovaWojtaszczyk2021}, which carries over verbatim.
  
\end{proof}

\paragraph{Applications}

In this paper, we are mainly interested in lower performance bounds for transport dominated parametric PDEs. In Section \ref{sec:transport}, we show results for the entropy. By the last two theorems, this directly implies lower bounds for the Lipschitz and stable width. Indeed, if
\[
  \entropy_n(\Qi(\instances)) \gtrsim n^{-\alpha}
\]
then for $\gamma \ge \diam(K)$ we have
\begin{align*}
  \mwidth_n^{\gamma, 1} & \gtrsim n^{-\bar{\alpha}}, &
  \mwidth_n^{(\gamma / \diam(\instances))^{1/2}} & \gtrsim n^{-\bar{\alpha}}.
\end{align*}
for all $\bar{\alpha} < \alpha$.

\paragraph{Comparison with Kolmogorov \texorpdfstring{$n$}{n}-width}

In the discussion above, we impose no restrictions on $\onfunc_n$ and $\offfunc_n$ other than (Lipschitz) continuity. This allows lower bounds for a large variety of methods, including classical reduced basis methods, but also alternative techniques such as neural networks. If, on the other hand, we do have the additional structure of a reduced basis method, the Lipschitz width provides a lower bound for the Kolmogorov $n$-width of the solution manifold. To this end, we first consider a relatively broad definition of a reduced basis method.
\begin{enumerate}[label=(RB\arabic*)]

  \item \label{item:rb-problems} \emph{Problem class:} Assume every problem $(\ell, F_\mu) \in \problems$ consists of a parametric operator $F_\mu: U \to V$ for $\mu \in \param$ and two Banach spaces $U$ and $V$ and a linear functional $\ell$ with $\|\ell\|_{U'} \le 1$. They define the quantity of interest $\qi_\mu = \ell(u_\mu)$, where $u_\mu$ solves $F_\mu(u_\mu) = 0$.

  \item \label{item:rb-online} \emph{Reduced basis method:} Assume we have a reduced solution operator $G_\mu: \ondata \to \real^m$ from the reduced data to reduced basis coefficients so that for every reduced basis $U^m = \linspan\{\psi_1, \dots, \psi_m\}$ the reconstruction 
  \[
    u_\mu^m := \sum_{i=1}^m G_\mu(\ondata)_i \psi_i
  \]
  is near optimal
  \begin{equation*}
    \|u_\mu^m - u_\mu\|_U \le c \inf_{u^m \in U^m} \|u^m - u_\mu\|_U,
  \end{equation*}
  $G_\mu$ is $\gamma$-Lipschitz in $\mu$ and the $\ondata$ is bounded $\|\ondata\| \le B \subset \real^n$ with $n \le \bar{c} m^\beta$ for constants $c, \bar{c}, \beta \ge 0$. Furthermore, the $\ondata$ contains the vector $\boldsymbol{\ell} := [\ell(\psi_i)]_{i=1}^m$.

\end{enumerate}
This definition is an abstraction of the standard reduced basis method in Section \ref{sec:elliptic-rb}. Indeed, for the latter we have
\begin{itemize}
  \item Arbitrary reduced basis $\psi_1,\dots, \psi_m$, 
  \item \ondata: Stiffness matrices and load vectors in \eqref{eq:online-store} of size $Qm^2+2m \sim cn^2$.
  \item Solution operator: Galerkin solution $G_\mu(\ondata) = \left(\sum_{q=1}^Q \theta_q(\mu) \boldsymbol{A}_q\right)^{-1} \boldsymbol{f}$.
  \item  Note: $G_\mu$ is independent of $\psi_i$, given \ondata.
  \item Note: $u_\mu^m$ is optimal (up to constants) by C\'{e}a's lemma.
\end{itemize}
Given the full reduced order model, we obtain the following comparison between Lipschitz width and Kolmogorov $n$-width of the solution manifold.

\begin{proposition}

  Assume that \ref{item:rb-problems} and \ref{item:rb-online} hold. Let $\instances$ be a parametrization of $\problems$ and $\mathcal{M}_\instance = \{u_\mu | \, \mu \in \param, \, F_\mu(u_\mu) = 0 \}$ be the solution manifold for problem instance $(\ell, F) \in \problems$, parametrized by $\instance \in \instances$. Let $Z = L_\infty(\param)$. Then
  \begin{equation*}
    \mwidth_{\bar{c} m^\beta} ^{\gamma, B}(\instances) \le c \sup_{\instance \in \instances} \nwidth_m(\mathcal{M}_\instance).
  \end{equation*}

\end{proposition}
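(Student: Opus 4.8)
The plan is to bound the Lipschitz width from above by exhibiting one admissible decoder built from the reduced basis method \ref{item:rb-online}. Since by assumption the reduced solution operator $G_\mu$ depends only on the reduced data $\ondata$ and the parameter $\mu$ --- not on the basis $\psi_i$, nor, beyond $\ondata$, on the problem instance --- I would define the online phase
\[
  \onfunc_n(\ondata) := \left[\, \mu \mapsto \boldsymbol{\ell}^T G_\mu(\ondata) \,\right] \in Z = L_\infty(\param)
\]
with $n := \bar c m^\beta$, where $\boldsymbol{\ell}$ is the sub-block of $\ondata \in \real^n$ that \ref{item:rb-online} guarantees to contain it and the reduced data of \ref{item:rb-online} (of dimension at most $\bar c m^\beta$) is padded with zeros that $\onfunc_n$ discards. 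Crucially, $\onfunc_n$ is a single map $\real^n \to Z$, independent of the instance, hence a legitimate competitor in the infimum defining $\mwidth_n^{\gamma,B}(\instances)$.

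For the error bound I would fix $\epsilon > 0$ and, for each instance $\instance \in \instances$, pick an $m$-dimensional space $U^m = \linspan\{\psi_1, \dots, \psi_m\} \subset U$ with
\[
  \sup_{\mu \in \param} \inf_{\phi \in U^m} \|u_\mu - \phi\|_U \le \nwidth_m(\mathcal{M}_\instance) + \epsilon,
\]
which exists by definition of the Kolmogorov width, and let $\ondata = \ondata(\instance)$ be the reduced data produced from this basis by \ref{item:rb-online}, so $\|\ondata\| \le B$. Linearity of $\ell$ gives $\boldsymbol{\ell}^T G_\mu(\ondata) = \sum_i G_\mu(\ondata)_i\, \ell(\psi_i) = \ell(u_\mu^m)$, so using $\|\ell\|_{U'} \le 1$ from \ref{item:rb-problems} and near-optimality from \ref{item:rb-online},
\[
  \|\Qi(\instance) - \onfunc_n(\ondata)\|_{L_\infty(\param)} = \sup_{\mu \in \param} |\ell(u_\mu - u_\mu^m)| \le \sup_{\mu \in \param} \|u_\mu - u_\mu^m\|_U \le c\,\bigl( \nwidth_m(\mathcal{M}_\instance) + \epsilon \bigr).
\]
Taking $\inf_{\|\ondata\| \le B}$, then $\sup_{\instance \in \instances}$, and then $\epsilon \to 0$ yields the claimed inequality --- provided $\onfunc_n$ is $\gamma$-Lipschitz.

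The Lipschitz continuity of $\onfunc_n$ will be the main obstacle and the only place the stability part of \ref{item:rb-online} is needed. I would write $\onfunc_n(\ondata)(\mu) - \onfunc_n(\ondata')(\mu) = \boldsymbol{\ell}^T G_\mu(\ondata) - (\boldsymbol{\ell}')^T G_\mu(\ondata')$ and estimate it as the difference of products of a bounded linear factor (the $\boldsymbol{\ell}$ block, of norm $\le B$) with a bounded factor that is Lipschitz in $\ondata$ uniformly in $\mu$ (the vector $G_\mu(\ondata)$, of norm $\le B$, by \ref{item:rb-online}); products of bounded Lipschitz maps are Lipschitz, and taking $\sup_{\mu \in \param}$ preserves the estimate into $L_\infty(\param)$. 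The care needed will be to fix the norm $\|\cdot\|_Y$ on $\real^n$ and to track the multiplicative constants (norm equivalences and the factor $B$) so that the resulting Lipschitz constant is the prescribed $\gamma$; once this is done $\onfunc_n$ is admissible in the definition of $\mwidth_n^{\gamma,B}$ and, together with the previous paragraph, the proof is complete.
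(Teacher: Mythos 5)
Your proposal is correct and follows essentially the same route as the paper: the same decoder $\ondata \mapsto \left[\mu \mapsto \boldsymbol{\ell}^T G_\mu(\ondata)\right]$ with $\boldsymbol{\ell}$ read off from $\ondata$, the same use of $\|\ell\|_{U'} \le 1$ and the near-optimality in \ref{item:rb-online} to bound the $L_\infty(\param)$ error by $c$ times the best approximation from $U^m$, and the same passage $\inf_{\|\ondata\| \le B}$, then optimization over the reduced space (your $\epsilon$-near-optimal choice is equivalent to the paper's infimum over $U^m$), then $\sup_{\instance \in \instances}$, with padding/monotonicity to reach $n = \bar{c} m^\beta$. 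The only divergence is cosmetic: where you sketch a product-of-bounded-Lipschitz-maps estimate and flag the bookkeeping needed to land exactly on the constant $\gamma$, the paper simply asserts the $\gamma$-Lipschitz continuity of the decoder directly from the stability assumption in \ref{item:rb-online}.
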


Since the dimension $n$ of $\ondata$ and the dimension $m$ of the reduced basis are of different order, typically $n \sim m^2 = m^\beta$, the convergence rates of Lipschitz width and Kolmogorov $n$-width in the last result differ. Hence it is not clear if the comparison is sharp.

\begin{proof}

Let $\boldsymbol{\ell} := [\ell(\psi_i)]_{i=1}^m$. We define a decoder by $D_\mu = \boldsymbol{\ell}^T G_\mu(\ondata)$. By assumption on $\ell$ and $G_\mu$, the decoder is $\gamma$-Lipschitz uniformly in $\mu$ and therefore also $\gamma$-Lipschitz in the supremum norm $Z=L_\infty(\param)$. By the optimality assumption in \ref{item:rb-online}, we have
\begin{multline*}
  |\ell(u_\mu) - D_\mu(\ondata)|
  = \left| \ell \left( u_\mu - \sum_{i=1}^m G_\mu(\ondata)_i \psi_i \right) \right|
  \\
  \le \left\| u_\mu - \sum_{i=1}^m G_\mu(\ondata)_i \psi_i \right\|
  = \left\| u_\mu - u_\mu^m \right\|
  \le c \inf_{u^m \in U^m} \left\| u_\mu - u^m \right\|.
\end{multline*}
Since the bound is uniform in $\mu$, this implies
\begin{equation*}
  \inf_{\substack{y \in \real^m \\ \|y\| \le B}} \|\ell(u_\cdot) - D_\cdot(y)\|_Z
  \le \sup_{\mu \in \param} |\ell(u_\mu) - D_\mu(\ondata)| 
  \le c \sup_{\mu \in \param} \inf_{u^m \in U^m} \left\| u_\mu - u^m \right\|.
\end{equation*}
Note that given $\ondata$, including $\boldsymbol{\ell}$, the decoder is independent of the reduced basis $U^m$. Thus, taking the infimum over the linear space $U^m \subset U$ of dimension $m$ and then the supremum over arbitrary problems $\instance \in \instances$, implies
\begin{equation*}
  \sup_{\instance \in \instances} \inf_{\substack{y \in \real^n \\ \|y\| \le B}} \|\ell(u_\cdot) - D_\cdot(y)\|_Z
  \le c \sup_{\instance \in \instances} \nwidth_m(\mathcal{M}_\instance)
\end{equation*}
Taking the infimum over all decoders and norms $\|\cdot\|_Y$ on $\real^n$ provides
\begin{equation*}
  \mwidth_n^{\gamma, B}(\instances) \le c \sup_{\instance \in \instances} \nwidth_m(\mathcal{M}(\instance))
\end{equation*}
so that $n \le \bar{c} m^\beta$ and the monotonicity of $\mwidth_m^{\gamma, B}$ shows the result.

\end{proof}

The assumptions \ref{item:rb-online} are well-known for the reduced basis method applied to elliptic parametric problems. For transport dominated problems they are less clear because of stability issues for Galerkin methods 
\cite{
  RozzaVeroy2007,
  GernerVeroy2012,
  DahmenPleskenWelper2014,
  Dahmen2015%
}
. Nonetheless, a lower bound for the Lipschitz width entails either a lower bound for the Kolmogorov $n$-width, or if the latter is much better, that we cannot build a reduced basis method with the properties that we are used to from the elliptic case.

\section{Application to Elliptic Problems}
\label{sec:elliptic}

Although we are mainly interested in transport problems, for the sake of completeness, we mention some well-known upper bounds for elliptic problems. In this case, the map $\Qi: \instances \to K$ of problem class parametrizations to quantity of interest is smoothing. This allows compact $K$ with excellent performance of reduced order models and corresponding small entropies, even if the problem classes' parametrizations $\instances$ are non-compact. For Example \ref{ex:elliptic}, we obtain entropy bounds based on the following result:

\begin{theorem} [{\cite[Section 2.5]{CohenDeVore2015}}]
  For the problem class \eqref{eq:elliptic-problem-class}, all functions in $K = \Qi(\instances)$ are analytic and uniformly bounded in an extended polydisc $L_\infty([-r,r]^Q)$ for some $r>1$.
\end{theorem}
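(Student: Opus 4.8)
The plan is the classical complexification argument for affinely parametrized elliptic problems. Fix a problem instance $\instance = (\varphi, f, \ell) \in \instances$, so that $\qi(\mu) = \ell(u_\mu)$ where $u_\mu \in H^1_0(\Omega)$ solves $(a_\mu \nabla u_\mu, \nabla v) = \dualp{f,v}$ for all $v$, with $a_\mu = \bar a + \sum_{q=1}^Q \mu_q \varphi_q$. First I would allow complex parameters $\mu \in \mathbb{C}^Q$ and pass to the sesquilinear form $b_\mu(w,v) := \int_\Omega a_\mu \, \nabla w \cdot \overline{\nabla v}$ on $H^1_0(\Omega;\mathbb{C})$. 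Choosing any $r$ with $1 < r < 2$, say $r = 3/2$, the assumptions $2 \le \bar a$ and $\sum_q |\varphi_q| \le 1$ give, for all $x \in \Omega$ and all $|\mu_q| \le r$,
\[
  \operatorname{Re} a_\mu(x) \ge 2 - r \sum_{q} |\varphi_q(x)| \ge 2 - r > 0, \qquad |a_\mu(x)| \le 3 + r,
\]
uniformly in the instance. Hence $b_\mu$ is a bounded, uniformly coercive sesquilinear form, since $\operatorname{Re} b_\mu(w,w) \ge (2-r)\|\nabla w\|_{L_2}^2$, and by the Lax--Milgram theorem for sesquilinear forms the complex solution operator $A(\mu)^{-1}\colon H^{-1}(\Omega;\mathbb{C}) \to H^1_0(\Omega;\mathbb{C})$, with $A(\mu)w := b_\mu(w,\cdot)$, exists and satisfies $\|A(\mu)^{-1}\| \le (2-r)^{-1}$ on the open polydisc $\{\mu \in \mathbb{C}^Q : |\mu_q| < r\}$.

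Second, I would establish holomorphy of $\mu \mapsto u_\mu = A(\mu)^{-1} f$ on that polydisc as an $H^1_0(\Omega;\mathbb{C})$-valued map. Since $\mu \mapsto A(\mu)$ is affine, hence entire, into the space of bounded operators $H^1_0 \to H^{-1}$, and operator inversion is holomorphic on the open set of boundedly invertible operators (locally a Neumann series), the composition $\mu \mapsto A(\mu)^{-1}$ and therefore $\mu \mapsto u_\mu$ is holomorphic; alternatively one checks complex differentiability in each variable separately and invokes Hartogs' theorem. Composing with the bounded linear functional $\ell$, $\|\ell\|_{H^{-1}} \le 1$, shows that $\qi(\mu) = \ell(u_\mu)$ is holomorphic on $\{|\mu_q| < r\}$, so in particular $\qi$ extends from $\param = [-1,1]^Q$ to a function that is analytic on a neighborhood of $[-r,r]^Q$.

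For the uniform bound, coercivity gives $\|u_\mu\|_{H^1_0} \lesssim \|f\|_{H^{-1}}/(2-r)$ and hence, for every instance $\instance \in \instances$ and every $\mu$ in the polydisc,
\[
  |\qi(\mu)| = |\ell(u_\mu)| \le \|\ell\|_{H^{-1}} \, \|u_\mu\|_{H^1_0} \lesssim \frac{\|\ell\|_{H^{-1}}\,\|f\|_{H^{-1}}}{2-r} \le \frac{1}{2-r},
\]
which with $r = 3/2$ bounds $\|\qi\|_{L_\infty([-r,r]^Q)}$ by an absolute constant, independent of $Q$ and of the instance. Thus every $\qi \in K = \Qi(\instances)$ extends to a holomorphic function on a polydisc of radius $r > 1$ and is bounded there uniformly, as claimed.

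The only genuinely delicate point is handling the non-symmetric complex coefficient $a_\mu$: one must use the variant of Lax--Milgram that only requires $\operatorname{Re} b_\mu(w,w) \gtrsim \|w\|^2$ rather than symmetry of the form, and then read off holomorphy from general operator-analytic facts. Everything else is a routine composition of estimates. Since this is precisely the content of \cite[Section 2.5]{CohenDeVore2015}, the remaining details can be imported verbatim.
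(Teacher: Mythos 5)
Your proposal is correct and is essentially the standard argument: the paper itself does not prove this statement but cites \cite[Section 2.5]{CohenDeVore2015}, where exactly this complexification with uniform ellipticity on a complex polydisc, the sesquilinear Lax--Milgram bound, and holomorphy of the affine operator family under inversion is carried out. The only cosmetic point is the radius bookkeeping: since your estimates hold for $|\mu_q|\le r''$ for any $r''<2$, take the complexification radius strictly larger than the $r>1$ appearing in the statement (e.g. $r''=3/2$, $r=5/4$) so that $\qi$ is analytic and uniformly bounded on all of $[-r,r]^Q$, not just its interior.
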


Without loss of generality, we may assume that the $L_\infty$ bound of $K$ is one. Then, \cite[p.505, Lemma 5.5]{LorentzGolitschekMakovoz1996} yields the following entropy bound, where
\[
  H_\epsilon(K) = \inf \left\{ n \, \middle| \, K\text{ is covered by }2^n\text{ balls of radius }\epsilon \right\}.
\]
is defined as the entropy, but minimized over the size $n$ of the cover instead of the radius $\epsilon$.

\begin{corollary}
  For the problem class \eqref{eq:elliptic-problem-class}
  \begin{equation*}
    H_\epsilon(\Qi(\instances)) \lesssim \log^{Q+1} \left(\frac{1}{\epsilon}\right).
  \end{equation*}
\end{corollary}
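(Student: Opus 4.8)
The plan is to combine the quoted analyticity result with the classical entropy estimate for classes of analytic functions on a polydisc. By the theorem of \cite{CohenDeVore2015}, every $q \in K = \Qi(\instances)$ extends to a function analytic on the polydisc $[-r,r]^Q$ with a uniform $L_\infty$ bound, which we normalize to one. Thus $K$ is (contained in) a bounded subset of the space of functions analytic on a neighborhood of $[-1,1]^Q$, and such classes are known to have very small metric entropy: one can approximate each function by a truncated multivariate Taylor (or Chebyshev) polynomial, whose coefficients decay geometrically because of the analytic extension, and then quantize the finitely many remaining coefficients. I would invoke \cite[p.505, Lemma 5.5]{LorentzGolitschekMakovoz1996} directly, as the excerpt already signals, since it gives precisely a bound of the form $H_\epsilon(K) \lesssim \log^{Q+1}(1/\epsilon)$ for the unit ball of functions analytic and bounded on an extended polydisc in $Q$ variables.

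Concretely, the steps are: first, cite the theorem to place $K$ inside the unit ball of $A_\infty([-r,r]^Q)$ for some fixed $r>1$ depending only on the problem class; second, normalize so the $L_\infty$-bound on $[-1,1]^Q$ is one (this only rescales constants); third, apply the Lorentz--Golitschek--Makovoz lemma to this ball, which yields $H_\epsilon \lesssim \log^{Q+1}(1/\epsilon)$ with an implied constant depending on $r$ and $Q$ but not on $\epsilon$; fourth, observe that $H_\epsilon$ is monotone under set inclusion, so the bound for the ball transfers to $K \subset A_\infty$. Since the statement is asserted "up to constants" via $\lesssim$, no careful tracking of the $Q$- or $r$-dependence of the constant is needed.

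The only genuine content is bookkeeping: checking that the normalization in the quoted theorem matches the hypotheses of the cited lemma (bounded on the larger polydisc $[-r,r]^Q$, unit sup-norm on $[-1,1]^Q$), and confirming the lemma is stated for the polydisc rather than a single interval — but the multivariate version is standard and is exactly what the excerpt points to. I do not anticipate a real obstacle here; the proof is essentially a two-line citation chain, and the main thing to get right is that the $\log$ exponent is $Q+1$ (one factor per variable, plus one from the radial truncation), which is what Lemma 5.5 delivers.
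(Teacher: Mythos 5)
Your proposal is correct and follows essentially the same route as the paper: the paper likewise normalizes the uniform $L_\infty$ bound of $K$ on the extended polydisc to one and then invokes \cite[p.~505, Lemma 5.5]{LorentzGolitschekMakovoz1996} directly, so the argument is exactly the citation chain you describe. No gap to report.
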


In fact, the papers
\cite{
  CohenDeVoreSchwab2010,
  CohenDeVoreSchwab2011,
  ChkifaCohenDeVoreSchwab2013,
  CohenDeVore2015%
} 
contain a much more careful analysis that allows infinite dimensional parameters $Q = \infty$.

\section{Application to Transport Problems}
\label{sec:transport}

In this section, we establish upper and lower bounds for the entropy of parametric linear transport PDEs.

\subsection{Main Result}

We consider entropy bounds for linear stationary transport problems
\begin{equation} \label{eq:transport-class}
  \begin{aligned}
    b_\mu \cdot \nabla u_\mu & = 0, &  & \text{in }\Omega \subset \real^d \\
    u_\mu|_{\Gamma_-} & = g_-, & & \text{on }\Gamma_- \subset \partial \Omega,
  \end{aligned}
\end{equation}
in $d \ge 2$ dimensions with quantity of interest
\begin{equation*}
  \qi(\mu) := \ell(u_\mu) := \int_{\Gamma_+} g_+ u_\mu.
\end{equation*}
The PDE's right hand side is set to zero, because it leaves the entropy invariant as argued in Section \ref{sec:rhs-entropy}. As usual, the boundary $\partial \Omega = \Gamma_- \cup \Gamma_0 \cup \Gamma_+$ is split into inflow, characteristic and outflow boundaries
\begin{align*}
  \Gamma_- := \Gamma_-(b_\mu) := \{x \in \partial \Omega | \, b_\mu(x) \cdot n(x) < 0\}, \\
  \Gamma_0 := \Gamma_0(b_\mu) := \{x \in \partial \Omega | \, b_\mu(x) \cdot n(x) = 0\}, \\
  \Gamma_+ := \Gamma_+(b_\mu) := \{x \in \partial \Omega | \, b_\mu(x) \cdot n(x) > 0\}.
\end{align*}
Since these are parameter dependent, we define $g_-$ and $g_+$ on the entire boundary $\partial \Omega$ so that all inflows and quantities of interest are well-defined.

Existence and uniqueness for the transport problem \eqref{eq:transport-class} can be delicate, for example if the characteristic equations have periodic solutions that never reach the boundary \cite{Evans2010}. Therefore, we make the following the minimal assumption on the flow fields to ensure that all quantities of interest are well defined.
\begin{assumption} \label{assumption:pde-solution}
  Let $\flows(g_+)$ be the set of all parametric flow fields $b_\mu$ for which the transport equation \eqref{eq:transport-class} has a unique solution on a subdomain $\omega \subset \Omega$ that includes the quantity of interest $\supp(g_+) \subset \omega$, for all parameters $\mu \in \param$.
\end{assumption}
Intuitively, this means that all characteristics passing through the support of $g_+$ also intersect the inflow boundary. All remaining characteristics do not influence the quantity of interest and are therefore of no interest to us.

For $0 < \sm, \sp$, $1\le \sb$ and $1 \le p < \infty$ and some sufficiently large constant $C_b$ (dependent on the smoothness of $\Xi$ in Assumption \ref{assumption:rdom:reference-domain} below), we consider two different problem classes parametrized by:
\begin{equation} \label{eq:transport-class-parametrization-inflow}
  \instances(b, g_+) := \left\{ g_- : \, g_- \in B_{W^{\sm,p}(\partial \Omega)} \right\}
\end{equation}
and
\begin{equation} \label{eq:transport-class-parametrization}
  \begin{multlined}
    \instances := 
      \left\{ (g_-, g_+, b) : \, g_- \in B_{W^{\sm,p}(\partial \Omega)}, \, g_+ \in B_{W^{\sp,\infty}(\partial \Omega)}, \, \right. \\
      \left. b \in B_{C^{\sb}(\Omega \times \param)}(C_b) \cap \flows(g_+) \right\},
  \end{multlined}
\end{equation}
where $B_\Box = B_\Box(1)$ and $B_\Box(r)$ denotes the ball of radius $r$ in the function space $\Box$. The first one contains only one fixed transport field $b$ and has multiple smooth inflow boundary conditions and outflow quantities of interest. The second one expands this to arbitrary smooth transport fields. For both classes, the error of the quantity of interest is measured in the $Z = L_\infty(\param)$-norm. 

In our setup, only the flow field $b_\mu$ depends on the parameter and therefore strongly influences the entropy numbers. Indeed, if it is parameter independent, the quantities of interest are so as well and the entropy is negligible. Therefore, in order to obtain meaningful bounds we need some control over the parameter dependence in $b$. This is achieved by a flow field defined on a reference domain, which is then mapped into the actual domain $\Omega$ by the following map.

\begin{assumption} \label{assumption:rdom:reference-domain}
  There is a map $\rdom$ with
  \begin{enumerate}
    \item $\rdom \colon [-1,1] \times [-1,1]^{d-1} \to \Omega$
    \item $\rdom$ is one-to-one.
    \item $\{\rdom(0,w): \, w \in [-1,1]^{d-1}\} =: \bi \subset \partial \Omega$
    \item $\{\rdom(1,w): \, w \in [-1,1]^{d-1}\} =: \bo \subset \partial \Omega$
    \item $\|\rdom\|_{C^{\sb}([-1,1] \times [-1,1]^{d-1})} \lesssim 1$, $\|\rdom^{-1}\|_{C^{\sb}(\rdom([-1,1] \times [-1,1]^{d-1}))} \lesssim 1$, 
  \end{enumerate}
\end{assumption}
Strongly parameter dependent flow fields are constructed from $\partial_1 \rdom$ with an extra parametric drift in in Section \ref{sec:transport:reference-domain}. The sets $\bi$ and $\bo$ will be subsets of the inflow and outflow boundaries $\Gamma_-$ and $\Gamma_+$.  An illustration is in Figure \ref{fig:reference-domain}. We obtain the following entropy bounds, first for fixed flow field.

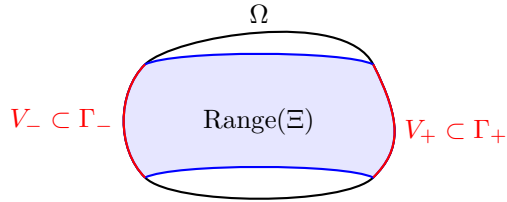
\begin{figure}[htb]

  \begin{center}
    \begin{tikzpicture}[scale=0.75]

      \draw [thick]
            (0,0) 
                  .. controls (0.5,-0.5) and (3.5,-0.5) .. 
            (4,0)
                  .. controls (4.5, 0.5) and (4.5, 1) .. 
            (4,2) 
                  .. controls (3.5, 3) and (0.5, 2.5) .. 
            (0,2) node[midway, above] {$\Omega$}
                  .. controls (-0.5, 1.5) and (-0.5, 0.5) ..
            (0,0);

      \draw [thick,blue,fill=blue!10]
            (0,0) 
                  .. controls (0.5,0.25) and (3.5,0.25) .. 
            (4,0)
                  .. controls (4.5, 0.5) and (4.5, 1) .. 
            (4,2) 
                  .. controls (3.5, 2.25) and (0.5, 2.25) .. 
            (0,2) 
                  .. controls (-0.5, 1.5) and (-0.5, 0.5) ..
            (0,0);

      \draw [thick,red]
            (0,2) 
                  .. controls (-0.5, 1.5) and (-0.5, 0.5) ..
            (0,0) node[midway,left] {$\bi \subset \Gamma_-$};

      \draw [thick,red]
            (4,0)
                  .. controls (4.5, 0.5) and (4.5, 1) .. 
            (4,2) node[midway,right] {$\bo \subset \Gamma_+$};

      \draw (2,1) node {$\operatorname{Range}(\Xi)$};

    \end{tikzpicture}
  \end{center}

  \caption{Illustration of Assumption \ref{assumption:rdom:reference-domain}.}
  \label{fig:reference-domain}
  
\end{figure}

\begin{theorem} \label{th:transport-entropy}

  Let the problem class with parametrization $\instances(b, g_+)$ be defined by \eqref{eq:transport-class-parametrization-inflow},  let Assumption \ref{assumption:rdom:reference-domain} be satisfied and $\param$ be an open parameter set of dimension larger than $d-1$. Then there is a flow field $b \in C^{\sb}(\Omega \times \param) \cap \flows(g_+)$ and outflow quantity of interest $g_+ \in B_{W^{\sp,\infty}(\partial \Omega})$ with
  \begin{equation*}
    \entropy_n(\Qi(\instances(b, g_+))) \gtrsim n^{-\frac{\sm+\sp+ (d-1)}{d-1}}.
  \end{equation*}
  
\end{theorem}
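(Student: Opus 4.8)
The plan is to prove the lower bound by embedding a large class of smooth functions—whose entropy is known—into the set $\Qi(\instances(b,g_+))$ of quantities of interest, so that the entropy of the latter inherits the lower bound. Since $\instances(b,g_+)$ varies only $g_-$ over the unit ball of $W^{\sm,p}(\partial\Omega)$, and the decoder $\Qi$ is linear in $g_-$, the key is to choose a single well-adapted flow field $b$ and quantity of interest $g_+$ so that the linear map $g_- \mapsto (\mu \mapsto \qi(\mu))$ is, up to constants, an isometry (or at least bounded below) from a rich subspace of $W^{\sm,p}(\partial\Omega)$ into $L_\infty(\param)$, landing in a Sobolev-type ball on the parameter side. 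Concretely: use the reference map $\rdom$ from Assumption \ref{assumption:rdom:reference-domain} to push forward the trivial flow $\partial_1\rdom$ and add a parametric drift that lets the first $d-1$ parameters act as a shift/relabeling of the transversal coordinate $w\in[-1,1]^{d-1}$ on the inflow slice $\bi$. Because $b_\mu \cdot \nabla u_\mu = 0$, the solution $u_\mu$ is constant along characteristics, so evaluating $u_\mu$ on $\bo$ and integrating against $g_+$ just reads off $g_-$ on $\bi$ composed with a $\mu$-dependent diffeomorphism of $[-1,1]^{d-1}$; choosing $g_+$ a fixed bump supported near a point of $\bo$, the quantity of interest becomes essentially $\mu \mapsto (\text{mollification of }g_-)(\text{point determined by }\mu)$.

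The main steps, in order: (1) Construct $b_\mu$ explicitly on $\rdom([-1,1]\times[-1,1]^{d-1})$ using $\partial_1\rdom$ plus a parametric transversal drift, verify $b_\mu \in C^{\sb}(\Omega\times\param)$ with the required norm bound (this is where the $C_b$ constant and the $C^{\sb}$ bounds on $\rdom, \rdom^{-1}$ enter), and check $b\in\flows(g_+)$ so that Assumption \ref{assumption:pde-solution} holds—every characteristic through $\supp(g_+)$ reaches $\bi\subset\Gamma_-$. (2) Solve the transport equation by characteristics to get the closed form $\qi(\mu) = \int_{[-1,1]^{d-1}} g_+(\rdom(1,w))\, g_-(\rdom(0, \Phi_\mu(w)))\,J_\mu(w)\,dw$ for the induced diffeomorphism $\Phi_\mu$ and Jacobian $J_\mu$. (3) Pick $g_+$ so the integral localizes to a fixed small patch, reducing $\qi(\mu)$ to a fixed mollifier applied to $g_-$ evaluated at a point $\xi(\mu)\in[-1,1]^{d-1}$ that sweeps out (as $\mu$ ranges over a $(d-1)$-dimensional sub-slice of $\param$) an open subset of $[-1,1]^{d-1}$. (4) Now the map $g_-\big|_{\bi} \mapsto (\mu\mapsto\qi(\mu))$ is, after the mollification, a bounded-below linear map; invoke the classical fact that the metric entropy of the unit ball of $W^{\sm,p}$ of a $(d-1)$-dimensional domain (here the relevant smoothness of the range is $\sm + \sp$: the $\sp$ derivatives of $g_+$ allow integrating by parts against $g_-$, effectively trading $\sp$ orders, and the $(d-1)$ in the exponent is the dimension of the image slice) inside $L_\infty$ decays like $n^{-(\sm+\sp+(d-1))/(d-1)}$—this is the Birman–Solomyak / Lorentz–Golitschek–Makovoz estimate—and transport that lower bound through the bounded-below map.

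The hard part will be step (2)–(3): making the characteristic computation rigorous while simultaneously controlling (a) the smoothness of $b_\mu$ near $\Gamma_0$ and the boundary, where $b_\mu\cdot n$ changes sign, and (b) the interplay between the $g_+$ localization and the mollification scale, so that the resulting mollifier is genuinely fixed (parameter-independent) and the map $g_-\mapsto\qi$ remains bounded below by a constant independent of $n$, $d$, $\sm$, $\sp$. A secondary subtlety is that the entropy lower bound for the mollified Sobolev ball must match $n^{-(\sm+\sp+(d-1))/(d-1)}$ on the nose—so the mollification must not cost smoothness, which requires choosing the mollifier at a fixed scale and observing that on the unit ball of $W^{\sm+\sp,p}$ (or the appropriate Besov space) a fixed mollification is a bounded, bounded-below operation in the relevant topology; alternatively one restricts to a finite-dimensional wavelet-type subspace of $g_-$ on which the mollifier is invertible with controlled constants and applies the Varshamov–Gilbert / volumetric packing argument there, which is the cleanest route to the claimed rate. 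Finally, one must cite Section \ref{sec:rhs-entropy} to dispense with the right-hand side and Section \ref{sec:transport:reference-domain} for the flow-field construction referenced in the theorem's hypotheses.
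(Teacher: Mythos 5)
Your step (1) and the characteristic formula in step (2) match the paper's construction (the flow field built from $\rdom$ with a parametric transversal shift is exactly Lemmas \ref{lemma:rdom:one-to-one}--\ref{lemma:rdom:assumptions}), but the core entropy step (3)--(4) has a genuine gap. The ``classical fact'' you invoke is misstated: the Birman--Solomyak / Lorentz--Golitschek--Makovoz asymptotics for the unit ball of $W^{s,p}$ on a $(d-1)$-dimensional domain in $L_\infty$ is $n^{-s/(d-1)}$, so even crediting the map with smoothness $\sm+\sp$ you would get $n^{-(\sm+\sp)/(d-1)}$, never the exponent $\frac{\sm+\sp+(d-1)}{d-1}$. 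In the paper the extra $(d-1)$ and the $\sp$ are not smoothness gains obtained by integrating by parts; they are \emph{penalties}: $g_+$ is chosen as a bump at scale $h\sim n^{-1/(d-1)}$, so its height is forced to be $\lesssim h^{\sp}$ by the $W^{\sp,\infty}$ constraint and its support has measure $\sim h^{d-1}$, which is exactly why $\ell(g_{-,i}\circ\cb_{\mu_i})\sim h^{\sm+\sp+(d-1)}$ in Lemma \ref{lemma:bump-functions}. Smoothness of $g_+$ makes the lower bound \emph{worse}, not better, and the scale of $g_+$ must shrink with $n$ --- it cannot be ``genuinely fixed'' as your argument requires.

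This is not only a bookkeeping issue; the mechanism itself fails. With a fixed (scale-independent) $g_+$, the linear map $g_-\mapsto q$ is a smoothing, compact operator (every $\mu$-derivative can be moved onto $g_+$, cf.\ Lemma \ref{lemma:transport:upper}), so it is not bounded below from any infinite-dimensional subspace into $L_\infty(\param)$, and no bounded-below transfer of Sobolev-ball entropy is available. Your fallback --- restrict to $\sim h^{-(d-1)}$ scale-$h$ bumps and pack --- also breaks with a wide $g_+$: every test parameter $\mu$ then ``sees'' all bumps simultaneously (the images $\cb_\mu$ of all bump supports lie inside $\supp(g_+)$), so the functions $\mu\mapsto q_i(\mu)$ have strongly overlapping supports, the cross terms at any evaluation point are of the same order $h^{\sm+(d-1)}$ as the diagonal term, and the $2^n$ coefficient patterns cannot be separated at the required threshold. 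The paper's way out is precisely to shrink $\supp(g_+)$ to $\bo[,h]$ so that the supports of the $q_i$ in $\mu$ are contained in disjoint sets $M_i$ (Assumption \ref{assumption:forward-backward-sets}, verified for the reference-domain flow in Lemma \ref{lemma:rdom:assumptions}), after which the disjoint-support packing lemma (Lemma \ref{lemma:entropy-lower-bound-construction}) applies with $\epsilon\sim h^{\sm+\sp+(d-1)}$ and $n\sim h^{-(d-1)}$, giving Proposition \ref{prop:transport-entropy} and hence the theorem. To repair your proposal you would have to let $g_+$ depend on $n$ and prove the $\mu$-support disjointness, at which point you have reproduced the paper's argument rather than an embedding of a Sobolev ball.
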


Entropy bounds for variable flow field are given in the next result.

\begin{theorem} \label{th:transport-entropy-var}

  Let the problem class with parametrization $\instances$ be defined by \eqref{eq:transport-class-parametrization},  let Assumption \ref{assumption:rdom:reference-domain} be satisfied and $\param$ be an open parameter set of dimension larger than $d-2 + D$, for some $D \ge 0$. Then
  \begin{equation*}
    \entropy_n(\Qi(\instances)) \gtrsim n^{-\frac{\sm+\sp+ (d-1)}{\max\{D/{\sb}, d-2\}}}.
  \end{equation*}
  
\end{theorem}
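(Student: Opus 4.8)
We argue by constructing a large packing of $\Qi(\instances)$ in $Z=L_\infty(\param)$. Fix a scale $h>0$ to be chosen and suppose we can produce a family of problem instances $P_\sigma\in\instances$, indexed by $\sigma$ in some set of size $2^N$, with
\[
  \|\Qi(P_\sigma)-\Qi(P_{\sigma'})\|_{L_\infty(\param)} \;\gtrsim\; h^{\,\sm+\sp+(d-1)}, \qquad \sigma\ne\sigma'.
\]
Then no cover of $\Qi(\instances)$ by $2^n$ balls of radius smaller than a fixed multiple of $h^{\sm+\sp+(d-1)}$ can separate all $2^N$ instances, so $\entropy_n(\Qi(\instances))\gtrsim h^{\sm+\sp+(d-1)}$ as soon as $n<N$. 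The plan is to build, for each $h$, two such families: one with $N\gtrsim h^{-(d-2)}$ and one with $N\gtrsim h^{-D/\sb}$; taking the larger gives $N\gtrsim h^{-\max\{d-2,\,D/\sb\}}$, and the choice $h\sim n^{-1/\max\{d-2,\,D/\sb\}}$ then yields the stated rate.

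For the $h^{-(d-2)}$-family I would reuse the mechanism behind Theorem~\ref{th:transport-entropy}. Using the reference map $\rdom$ of Assumption~\ref{assumption:rdom:reference-domain}, pass to coordinates $(s,w)\in[-1,1]\times[-1,1]^{d-1}$ in which the characteristics of the base flow $\partial_1\rdom$ are the curves $s\mapsto\rdom(s,w)$ running from $\rdom(0,w)\in\bi\subset\Gamma_-$ to $\rdom(1,w)\in\bo\subset\Gamma_+$. Add a parametric drift so that the flow map shifts the $w$-coordinate by $\tau(\mu)$, where $\tau$ sweeps a $(d-2)$-dimensional grid of spacing $h$ as $\mu$ ranges over a coordinate box in $\param$ (possible since $\dim\param>d-2+D\ge d-2$), keeping $b_\mu\in B_{C^{\sb}(\Omega\times\param)}(C_b)\cap\flows(g_+)$. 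Take $g_+$ a single width-$h$, amplitude $\sim h^{\sp}$ bump on $\bo$, and $g_-=\sum_j\sigma_j\phi_j$ a sign-weighted superposition of $\sim h^{-(d-2)}$ width-$h$, amplitude $\sim h^{\sm}$ bumps $\phi_j$ placed at the grid nodes on $\bi$. Up to lower-order distortion, $\Qi(P_\sigma)(\mu)$ equals $h^{\sm+\sp+(d-1)}$ times the sign of whichever $\phi_j$ the drift $\tau(\mu)$ brings into alignment with $g_+$; hence two patterns disagreeing at a node give quantities of interest separated by $\gtrsim h^{\sm+\sp+(d-1)}$, and $N=h^{-(d-2)}$.

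For the $h^{-D/\sb}$-family I would encode the sign pattern in the flow field itself. Isolate $D$ coordinates of $\param$ forming a box $\Lambda\subset\real^D$, subdivide $\Lambda$ into $\rho^{-D}$ sub-cubes with $\rho:=h^{1/\sb}$ (note $\rho\ge h$ since $\sb\ge1$), and to each sub-cube $k$ attach a perturbation $\delta b^{(k)}_\mu$, supported in the $\Lambda$-variables strictly inside sub-cube $k$ and in $x$ along the characteristics feeding $\supp g_+$, of amplitude $\sim h$; its $C^{\sb}(\Omega\times\param)$-norm is then $\sim h\,\rho^{-\sb}\sim1$, so $\bar b_\mu+\sum_k\sigma_k\delta b^{(k)}_\mu$ stays in $B_{C^{\sb}(\Omega\times\param)}(C_b)\cap\flows(g_+)$, and the disjoint $\mu$-supports prevent interference. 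Each $\delta b^{(k)}$ translates the characteristics reaching $\supp g_+$ by $\sim\sigma_k h$ for $\mu$ in sub-cube $k$. With $g_-,g_+$ fixed single bumps of width $h$ (so $\Qi(P)(\mu)\sim h^{\sm+\sp+(d-1)}\,\psi(r(\mu))$ for a fixed order-one cross-correlation profile $\psi$ of $h$-scale support), and the base flow arranged so that the relative position $r(\mu)$ sweeps a range $\gtrsim h$ within each sub-cube, flipping $\sigma_k$ changes $\Qi(P_\sigma)(\mu)$ by an order-one fraction of $h^{\sm+\sp+(d-1)}$ for some $\mu$ in sub-cube $k$; thus distinct patterns are $\gtrsim h^{\sm+\sp+(d-1)}$-separated, with $N=\rho^{-D}=h^{-D/\sb}$.

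I expect the main difficulty to be the rigorous control of the perturbed flows: verifying for every admissible sign pattern that the resulting $b_\mu$ lies in $B_{C^{\sb}(\Omega\times\param)}(C_b)\cap\flows(g_+)$ — existence, uniqueness, and the requirement that every characteristic through $\supp g_+$ still reaches $\Gamma_-$ — and proving that each perturbation shifts exactly the intended characteristics by $\sim h$, localized to its own sub-cube, with all distortion terms (curvature of characteristics, the $C^{\sb}$ change of variables $\rdom$, the non-bump remainder of the transported $g_-$, the deviation of the flow map from a pure translation, and the overlap between neighbouring bumps) provably of lower order than $h^{\sm+\sp+(d-1)}$ \emph{uniformly over the whole parameter box}, since the error is measured in $L_\infty(\param)$. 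A secondary point is the precise $W^{\sm,p}$-normalization of the bump families and the degenerate case $d=2$ (where $d-2=0$ and only the flow-field mechanism contributes), both of which should follow once the bump estimates are carried out carefully, and may in fact give a somewhat sharper exponent for finite $p$.
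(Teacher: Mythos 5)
Your overall strategy is sound and, at the structural level, it is the same as the paper's: bump functions of width $h$ and amplitudes $h^{\sm}$, $h^{\sp}$ giving a separation $\sim h^{\sm+\sp+(d-1)}$, a packing/pigeonhole lemma (the paper's Lemma \ref{lemma:entropy-lower-bound-count}), one source of $\sim h^{-(d-2)}$ bits from inflow patterns moved by a parametric drift built on the reference map $\rdom$, one source of $\sim h^{-D/\sb}$ bits from amplitude-$h$, width-$h^{1/\sb}$ modifications of the flow field in the $\mub$-variables, and $h \sim n^{-1/\max\{d-2,D/\sb\}}$. Two differences are worth noting. First, you build two separate separated families and take the larger, while the paper (Proposition \ref{prop:transport-entropy-var}) builds a single product family $q_{\theta,\vartheta}$ indexed simultaneously by the inflow pattern $\theta$ and the flow-field pattern $\vartheta$; for the exponent $\max\{d-2,D/\sb\}$ the two are equivalent, since the product only improves the count by a factor absorbed in the max. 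Second, and more substantively, your flow-field bits are $\pm h$ shifts detected through the cross-correlation profile, which is exactly why you are forced to worry about sweeping the alignment window inside each $\mub$-sub-cube and about uniform-in-$\mu$ distortion terms; the paper instead uses an on/off switch (Assumption \ref{assumption:extra-dim-independent}): the reserved $(d-1)$-st reference direction is used so that in the off state the backward--forward characteristics miss $\boh$ entirely and the quantity of interest is \emph{exactly} zero, so no quantitative error control in $L_\infty(\param)$ is needed. Your own construction can be simplified to this (fix the base alignment offset at $-h$ rather than sweeping, so the flip toggles between exact alignment and complete misalignment), and as written the sweep version has an unaddressed coordination issue between where the alignment window is crossed and where the $\mub$-bump is of order one.

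The genuine missing piece is the one you flag yourself: the existence of admissible flow fields realizing these shifts. This is not a routine perturbation estimate in the paper but an exact construction (Section \ref{sec:transport:reference-domain}): characteristics are defined by $X_\mu(t,x)=\rdom_\mu(t,\xi_{-,\mu}^{-1}(x))$ with $\rdom_\mu(t,w)=\rdom(t,(1-t)(w+\mu)+tw)$, so that $\cf_\mu$ and $\cb_\mu$ are exact translations by $\mu$ in reference coordinates (Lemmas \ref{lemma:rdom:inflow-to-outflow}--\ref{lemma:rdom:forward-backward}), $\|b\|_{C^{\sb}(\Omega\times\param)}\lesssim 1$ follows by the chain rule from Assumption \ref{assumption:rdom:reference-domain}, and membership in $\flows(g_+)$ is automatic on $\ran(\rdom)$; all the ``lower-order distortion terms'' you list simply do not arise. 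Until that construction (or an equivalent one, including the $W^{\sb,\infty}$-bounded reparametrization $\vartheta(\mub)=\sum_k\vartheta_k\phi_k(\mub)$ that turns the one-dimensional switch into a $D$-dimensional parameter dependence, as in the paper) is carried out, your argument is a correct reduction of the theorem to an unproven construction rather than a proof.
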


With Theorem \ref{th:entropy-lipwidth} or \cite[Corollary 4.8]{PetrovaWojtaszczyk2021}, the entropy lower bounds imply bounds for the Lipschitz width.

\begin{corollary} \label{cor:transport-entropy}
   
  Let all assumptions of Theorem \ref{th:transport-entropy} be satisfied. Then there is a flow field $b \in C^{\sb}(\Omega \times \param)$ and outflow quantity of interest $g_+ \in B_{W^{\sp,\infty}(\partial \Omega})$ so that for $\gamma > \diam(\Qi(\instances(b, g_+)))$ and any $\alpha$ that satisfies the condition below
  \begin{align*}
    \mwidth_n^{\gamma,1}(\instances(b, g_+))
    & = \Omega(n^{-\alpha}), &
    \alpha & <\frac{\sm+\sp+ (d-1)}{d-1}.
  \end{align*}
  
\end{corollary}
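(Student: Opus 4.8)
The plan is to obtain Corollary~\ref{cor:transport-entropy} directly from two results already available: the entropy lower bound of Theorem~\ref{th:transport-entropy} and the entropy-to-Lipschitz-width transfer of Theorem~\ref{th:entropy-lipwidth} (i.e.\ \cite[Corollary~4.8]{PetrovaWojtaszczyk2021}). Essentially all of the substance sits upstream in Theorem~\ref{th:transport-entropy}; the rest is bookkeeping. First I would fix the data: applying Theorem~\ref{th:transport-entropy} under the hypotheses carried verbatim into the corollary --- Assumption~\ref{assumption:rdom:reference-domain}, an open $\param$ with $\dim\param>d-1$, and the one-field problem class \eqref{eq:transport-class-parametrization-inflow} --- produces a flow field $b\in C^{\sb}(\Omega\times\param)\cap\flows(g_+)$ and an outflow weight $g_+\in B_{W^{\sp,\infty}(\partial\Omega)}$ such that, writing $K:=\Qi(\instances(b,g_+))$ and $\bar\alpha:=\frac{\sm+\sp+(d-1)}{d-1}$,
\[
  \entropy_n(K)\ \gtrsim\ n^{-\bar\alpha}.
\]
These are exactly the $b$ and $g_+$ named in the corollary, so no new construction is needed.

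Next I would invoke Theorem~\ref{th:entropy-lipwidth} for this $K$. Its hypotheses are met: $K$ is compact --- this is already implicit in the statement of Theorem~\ref{th:transport-entropy}, and stems from the smoothing of the solution map $g_-\mapsto \qi(\cdot)$ together with the Sobolev embedding built into the smoothness indices --- and $\gamma>\diam(K)$ is assumed in the corollary. Feeding the bound above into the polynomial-rate branch of Theorem~\ref{th:entropy-lipwidth} with $\beta=0$ and exponent $\bar\alpha$ gives
\[
  \mwidth_n^{\gamma,1}(\instances(b,g_+))\ \gtrsim\ \frac{1}{n^{\bar\alpha}\,[\log_2 n]^{\bar\alpha}}.
\]
It then remains to absorb the polylogarithmic factor into the polynomial rate: for every exponent $\alpha$ obeying the inequality displayed in the corollary, the factor $[\log_2 n]^{-\bar\alpha}$ costs at most an arbitrarily small perturbation of the exponent, so $\mwidth_n^{\gamma,1}(\instances(b,g_+))=\Omega(n^{-\alpha})$. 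This is exactly the logarithmic slack already flagged in the ``Applications'' paragraph of Section~\ref{sec:benchmarks}.

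I do not expect a genuine obstacle inside the corollary itself: the difficulty is concentrated in Theorem~\ref{th:transport-entropy}, which is used here as a black box. The two points that deserve a sentence of care are (i) verifying that $K=\Qi(\instances(b,g_+))$ is compact, so that Theorem~\ref{th:entropy-lipwidth} legitimately applies --- which is inherited from the construction behind Theorem~\ref{th:transport-entropy} --- and (ii) the logarithmic bookkeeping in the last step, which is the reason the admissible exponent $\alpha$ is governed by a strict inequality rather than matching the entropy rate $\bar\alpha$ on the nose. If in addition one wants a lower bound for the stable manifold width, the same short argument combined with the stable/Lipschitz comparison quoted from \cite[Theorem~6.1]{PetrovaWojtaszczyk2021} yields $\mwidth_n^{(\gamma/\diam(\instances))^{1/2}}=\Omega(n^{-\alpha})$ under the same constraint, again as indicated in Section~\ref{sec:benchmarks}.
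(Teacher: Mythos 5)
Your proposal is correct and follows essentially the same route as the paper's own proof: Theorem \ref{th:transport-entropy} supplies the flow field $b$, the quantity of interest $g_+$ and the entropy lower bound, the polynomial $\Omega$-branch of Theorem \ref{th:entropy-lipwidth} transfers this to the Lipschitz width, and the logarithmic factor is then traded for the slightly perturbed polynomial exponent governed by the strict inequality on $\alpha$ --- exactly the two-step argument the paper gives. Your side remarks on the compactness of $\Qi(\instances(b,g_+))$ and on the stable-width variant via \cite[Theorem 6.1]{PetrovaWojtaszczyk2021} match the discussion already present in Section \ref{sec:benchmarks}.
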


\begin{corollary} \label{cor:transport-entropy-var}
   
  Let all assumptions of Theorem \ref{th:transport-entropy-var} be satisfied and $\gamma > \diam(\Qi(\instances))$. Then for any $\alpha$ that satisfies the condition below
  \begin{align*}
    \mwidth_n^{\gamma,1}(\instances)
    & = \Omega(n^{-\alpha}), &
    \alpha & < n^{-\frac{\sm+\sp+ (d-1)}{\max\{D/\sb, d-2\}}}.
  \end{align*}
  
\end{corollary}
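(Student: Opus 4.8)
This is a corollary, so the plan is simply to chain two results already in hand: the entropy lower bound of Theorem~\ref{th:transport-entropy-var} and the entropy-to-Lipschitz-width transfer of Theorem~\ref{th:entropy-lipwidth} (polynomial-rate case), exactly along the lines of the ``Applications'' paragraph. Abbreviate $\alpha_0 := \frac{\sm+\sp+(d-1)}{\max\{D/\sb,\,d-2\}}$, the exponent produced by Theorem~\ref{th:transport-entropy-var} under the standing hypotheses (Assumption~\ref{assumption:rdom:reference-domain} and $\dim\param > d-2+D$), and let $\gamma > \diam(\Qi(\instances))$ as assumed.

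The steps, in order, are as follows. First I would check that the hypotheses of Theorem~\ref{th:entropy-lipwidth} hold: $Z = L_\infty(\param)$ is a Banach space, $K = \Qi(\instances)$ is compact (being the continuous image of the compact parametrization $\instances$, the same compactness that underlies all of Section~\ref{sec:transport}), and $\gamma$ exceeds $\diam(K)$ by assumption. Second, I would invoke Theorem~\ref{th:transport-entropy-var} to get $\entropy_n(\Qi(\instances)) \gtrsim n^{-\alpha_0}$. Third, I would feed this into the lower-bound half of the polynomial case of Theorem~\ref{th:entropy-lipwidth} with logarithmic exponent equal to zero, obtaining
\[
  \mwidth_n^{\gamma,1}(\instances) \;\gtrsim\; \frac{1}{n^{\alpha_0}\,[\log_2 n]^{\alpha_0}} \;=\; \bigl(n\log_2 n\bigr)^{-\alpha_0}.
\]
Fourth, I would absorb the logarithm into the polynomial rate: for every $\alpha$ strictly larger than $\alpha_0$ we have $\bigl(n\log_2 n\bigr)^{-\alpha_0} \gtrsim n^{-\alpha}$ for all sufficiently large $n$ (because $n^{\alpha-\alpha_0}$ eventually dominates $[\log_2 n]^{\alpha_0}$), so that $\mwidth_n^{\gamma,1}(\instances) = \Omega(n^{-\alpha})$, which is the asserted rate. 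If the companion lower bound for the stable width $\mwidth_n^{\gamma}$ is also desired, it follows by one further application of the Lipschitz-versus-stable comparison quoted right after Theorem~\ref{th:entropy-lipwidth}.

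There is no genuine obstacle in the present proof: all the hard work lies in Theorem~\ref{th:transport-entropy-var} itself (the construction of a strongly parameter-dependent flow field from the reference-domain map of Assumption~\ref{assumption:rdom:reference-domain} and the packing/separation argument behind the entropy estimate). The only points here requiring a moment's attention are confirming that $\Qi(\instances)$ is compact, so that Theorem~\ref{th:entropy-lipwidth} is in force, and the elementary bookkeeping in the fourth step that converts the $(n\log_2 n)^{-\alpha_0}$ estimate into a clean polynomial lower bound valid for the admissible exponents.
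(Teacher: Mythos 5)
Your route is exactly the paper's: Theorem \ref{th:transport-entropy-var} gives $\entropy_n(\Qi(\instances)) \gtrsim n^{-\alpha_0}$ with $\alpha_0 := \frac{\sm+\sp+(d-1)}{\max\{D/\sb,\,d-2\}}$, the polynomial case of Theorem \ref{th:entropy-lipwidth} transfers this to $\mwidth_n^{\gamma,1}(\instances) \gtrsim (n\log_2 n)^{-\alpha_0}$, and the logarithm is then absorbed into a pure polynomial rate; the paper's own proof is a two-line version of this same chain.

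One point should be corrected rather than glossed over: your absorption step is valid precisely for $\alpha$ strictly \emph{larger} than $\alpha_0$, since $(n\log_2 n)^{-\alpha_0} \ge c\, n^{-\alpha}$ for all large $n$ forces $n^{\alpha-\alpha_0} \gtrsim (\log_2 n)^{\alpha_0}$. That is the opposite of the corollary's printed condition ``$\alpha < \dots$'' (whose right-hand side moreover carries a spurious $n^{-}$), so it is not literally ``the asserted rate.'' For $\alpha < \alpha_0$ the claimed lower bound does not follow from the entropy estimate at all: it would fail if, say, $\mwidth_n^{\gamma,1} \sim n^{-\alpha_0}$, which nothing proved here excludes. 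What your argument (and the paper's) actually establishes is $\mwidth_n^{\gamma,1}(\instances) = \Omega(n^{-\alpha})$ for every $\alpha > \alpha_0$, i.e.\ for rates slightly diminished relative to the entropy rate, and you should state this explicitly instead of matching it to the misprinted condition. A secondary, smaller gap: compactness of $K = \Qi(\instances)$ in $L_\infty(\param)$ is a hypothesis of Theorem \ref{th:entropy-lipwidth}, and ``continuous image of the compact parametrization'' does not establish it, because the unit balls of $W^{\sm,p}$, $W^{\sp,\infty}$ and $C^{\sb}$ that make up $\instances$ are not compact in their own norms; one would need, e.g., uniform boundedness and equicontinuity of the family $\mu \mapsto \qi(\mu)$ (a point the paper also leaves implicit).
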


\begin{proof} [Proof of Corollaries \ref{cor:transport-entropy} and \ref{cor:transport-entropy-var}]

Both corollaries follow directly from Theorems \ref{th:transport-entropy} and \ref{th:transport-entropy-var}, respectively, together with Theorem \ref{th:entropy-lipwidth}, which compares entropy and Lipschitz width. We omit the log factors in the latter theorem in favor of a slightly diminished polynomial rate.

\end{proof}

These lower bounds have several practical implications:
\begin{enumerate}

  \item In order to achieve high order, $\sm+\sp$ must be large and therefore either the inflow boundary condition or the quantity of interest must be smooth. This also implies that high order methods for the PDE solution $u_\mu$ that are available in the literature, e.g. \cite{Welper2017,Welper2019}, cannot yield high order approximations of quantities of interest with full online/offline splitting, unless the functional is smooth, see Section \ref{sec:implications-for-transform-methods} for details.

  \item In the given setup, the regularity of the transport field $b_\mu$ is critical for the dimension dependence of the lower bounds. For only one $b$, the entropy (lower bound) is dimension independent. For arbitrary $C^{\sb}$ smooth $b$, we suffer the usual curse of dimensionality. For comparison, $u_\mu$ can be approximated with dimension independent bounds if the flow field is in Barron spaces \cite{LaakmannPetersen2020} or has compositional sparsity \cite{Dahmen2023}. Corresponding entropy benchmarks are left for future work. Also note that the situation is quite different form the elliptic case, where dimension independent orders are achieved for very high dimensional diffusion parameters.

\end{enumerate}

The results are proven in Section \ref{sec:transport:proof-main}. Section \ref{sec:transport-simple} discusses a very simple case, which highlights the relation of the entropy numbers to the known entropy of convolutions. Our subsequent proofs follow a similar argument. In Sections \ref{sec:transport:entropy-b}, \ref{sec:transport:entropy-b-var}, we provide conditions on the flow field $b$, which allow us to prove entropy bounds. In Section \ref{sec:transport:reference-domain}, we construct flow fields that satisfy the stated conditions and in Section \ref{sec:transport:proof-main} we prove the theorems above. We include some simple upper bounds in Section \ref{sec:transport:upper-bounds}.

\subsection{Non-Zero Right Hand Side}
\label{sec:rhs-entropy}

The main results consider transport problems $b_\mu \nabla u = f$ with right and side $f=0$. This choice $f=0$ is justified by the following lemma.

\begin{lemma}
  For problems
  \begin{align}
    \qi(\mu) & = \ell(u_\mu) = \int_{\Gamma_+} g_+ u_\mu, &
    b_\mu \nabla u_\mu & = f, &
    u_\mu|_{\Gamma_-} & = g_-
  \end{align}
  parametrized by
  \begin{equation*}
    \instances(b,g_+,f) := \left\{ g_- : \, g_- \in B_{W^{\sm,p}(\Gamma_-)} \right\}
  \end{equation*}
  we have
  \begin{equation*}
    \entropy_n(\Qi(\instances(b,g_+,f))) = \entropy_n(\Qi(\instances(b,g_+,0))).
  \end{equation*}
\end{lemma}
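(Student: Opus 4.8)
The idea is to exploit the linearity of the transport operator: the solution splits into a part that carries the $g_-$-dependence and a fixed part carrying the right hand side $f$, so that the two problem classes produce quantities of interest that differ only by a single translation vector in $Z$, and metric entropy is invariant under translation.

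First I would fix $b \in \flows(g_+)$ and, for each admissible inflow datum $g_- \in B_{W^{\sm,p}(\Gamma_-)}$ and each $\mu \in \param$, write $u_\mu = v_\mu + w_\mu$, where $v_\mu$ solves $b_\mu \nabla v_\mu = 0$ with $v_\mu|_{\Gamma_-} = g_-$ and $w_\mu$ solves $b_\mu \nabla w_\mu = f$ with $w_\mu|_{\Gamma_-} = 0$. By Assumption \ref{assumption:pde-solution} the homogeneous problem has a unique solution on a subdomain $\omega \supset \supp(g_+)$; since the equation is linear, the same holds for the inhomogeneous problem with zero inflow (any two such solutions differ by a homogeneous solution with zero inflow data, which vanishes on $\omega$), so $w_\mu$ is well-defined on $\omega$. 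The key point is that $w_\mu$ does not depend on $g_-$, whereas $v_\mu$ does not depend on $f$ and for $f = 0$ one has $w_\mu \equiv 0$, $v_\mu = u_\mu$.

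Next I would pass to the quantity of interest. By linearity of $\ell$,
\[
  \qi(\mu) = \ell(u_\mu) = \ell(v_\mu) + \ell(w_\mu),
\]
so the map $z_0 \colon \mu \mapsto \ell(w_\mu)$ is a single fixed element of $Z = L_\infty(\param)$, independent of $g_-$ (it lies in $Z$ because $\entropy_n(\Qi(\instances(b,g_+,f)))$ is meaningful only if $\Qi(\instances(b,g_+,f)) \subset Z$). Since the classes $\instances(b,g_+,f)$ and $\instances(b,g_+,0)$ share the same underlying parameter set $\{g_- \in B_{W^{\sm,p}(\Gamma_-)}\}$, this yields the identity of subsets of $Z$,
\[
  \Qi(\instances(b,g_+,f)) = z_0 + \Qi(\instances(b,g_+,0)).
\]
Consequently the map $q \mapsto q - z_0$ is a bijective isometry between the two sets, and in particular: if $q_1,\dots,q_{2^n}$ is an $\epsilon$-cover of $\Qi(\instances(b,g_+,0))$, then $z_0 + q_1,\dots,z_0 + q_{2^n}$ is an $\epsilon$-cover of $\Qi(\instances(b,g_+,f))$ of the same cardinality, giving $\entropy_n(\Qi(\instances(b,g_+,f))) \le \entropy_n(\Qi(\instances(b,g_+,0)))$; applying the same argument with $-z_0$ gives the reverse inequality, hence equality.

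The argument is essentially routine. The only point that needs a little care is the well-definedness of $w_\mu$, and hence of the translation vector $z_0$ as an element of $Z$; this I would settle as above, using linearity of the transport equation together with Assumption \ref{assumption:pde-solution} to reduce uniqueness for the inhomogeneous problem to that of the homogeneous one.
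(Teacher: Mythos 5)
Your proposal is correct and follows essentially the same route as the paper: split $u_\mu$ into the homogeneous solution with inflow $g_-$ and the inhomogeneous solution with zero inflow, observe that the quantity of interest is shifted by the single fixed function $\mu \mapsto \ell(w_\mu)$, and conclude that this translation leaves the entropy unchanged. Your extra care about the well-definedness of $w_\mu$ and the explicit cover argument are fine elaborations of the same idea.
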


\begin{proof}

  Define $u_\mu^0$ and $u_\mu^f$ by 
  \begin{align*}
    b_\mu \nabla  u_\mu^0 & = 0 &
    b_\mu \nabla  u_\mu^f & = f \\
    u_\mu^0|_{\Gamma_-} & = g_- &
    u_\mu^f|_{\Gamma_-} & = 0,
  \end{align*}
  respectively. By linearity we have $u_\mu = u_\mu^0 + u_\mu^f$ and thus
  \begin{equation*}
    \qi(\mu) = \ell(u_\mu^0) + \ell(u_\mu^f).
  \end{equation*}
  Since the first summand is independent of the right hand side $f$ and the second is independent of the problem instance given by the inflow boundary $g_-$, all functions $\qi(\mu)$ are shifted by the single function $\mu \to \ell(u_\mu^0)$ between the two classes $\Qi(\instances(b, g_+, f))$ and $\Qi(\instances(b, g_+, 0))$, respectively. Clearly this leaves the entropy invariant. 

\end{proof}

\subsection{A Simple Case}
\label{sec:transport-simple}

Before we prove the main theorems, we discuss a simple problem class for which the entropy calculation can be reduced to a well-known case:
\begin{equation*} 
  \begin{aligned}
    \partial_t u_\mu + \mu \partial_x u_\mu & = 0 & &  \text{in } \real \times \real_+ \\
    u_\mu & = g_- & & \text{on } \{0\} \times \real,
  \end{aligned}
\end{equation*}
with parameter $\mu \in \param := [-1, 1]$ and quantities of interest
\begin{equation*} 
  \ell(u_\mu) = \int_\real u_\mu(x,1) g_+(x) \text{d}x
\end{equation*}
at some final time $t$, which we set to $t=1$ for simplicity. To ease the argument below, regarding spatial inflow boundary conditions, we choose the infinite domain $\real \times \real_+$. With solution
\begin{equation*}
  u_\mu(x,t) = g_-(x-\mu t)
\end{equation*}
we compute the quantity of interest explicitly as
\[
  \qi(\mu) 
  = \ell(u_\mu)
  = \int_\real g_-(x - \mu) g_+(x) \text{d}x.
\]
Setting $Rg_-(x) := g_-(-x)$, this simplifies to the convolution
\begin{equation*}
  \qi(\mu) = (Rg_- * g_+)(\mu).
\end{equation*}
Generally, we expect low entropy only if either $g_-$ or $g_+$ admit some smoothness. In the following, we use Sobolev smoothness, which allows us to construct proofs along standard lines \cite{LorentzGolitschekMakovoz1996} by constructing sufficiently many quantities of interest from disjoint bump functions.

\subsection{Entropy Bounds for Highly Parameter Dependent Flows: fixed \texorpdfstring{$b$}{b}}
\label{sec:transport:entropy-b}

In this section, we provide entropy bounds for flow fields $b_\mu$ that are strongly parameter dependent, characterized by the way they move small balls from inflow to outflow and back. To this end, we fix a parameter independent subset $\bo \subset \Gamma_+$ of the outflow boundary and define two maps that push points from inflow to outflow, and vice versa
\begin{equation} \label{eq:inflow-to-outflow}
  \begin{aligned}
    & \cf_\mu \colon \Gamma_- \to \Gamma_+ & 
    \cf_\mu(x) & = \Gamma_+ \cap \{X_{\mu}(t; x) : \, t \in \real\}  
    \\
    & \cb_\mu \colon \Gamma_+ \to \Gamma_- & 
    \cb_\mu(y) & = \Gamma_- \cap \{X_{\mu}(t; y) : \, t \in \real\}  
  \end{aligned}
\end{equation}
along characteristics
\begin{align*}
  \frac{d}{dt} X_\mu(t;x) & = b_\mu(X_\mu(t;x)), &
  X(0,x) & = x.
\end{align*}
We characterize the dependence of the flow field on the parameter by observing how $\cf_\mu$ and $\cb_\mu$ move small balls
\begin{align} \label{eq:balls-outflow}
  \bo[,h] & = \xi_+(B_h) \subset \Gamma_+, &
  \bo & := \bo[,1], &
  B_h = \{z \in \real^{d-1} : \, |z| \le h\}
\end{align}
defined by a parametrization $\xi_+$ of (part) of the outflow boundary $\Gamma_+$. First, we ensure that $\cf_\mu$ and $\cb_\mu$ are well defined.
\begin{assumption} \label{assumption:backward-forward}
  The two maps $\cf_\mu$ and $\cb_\mu$ are well-defined, single-valued and satisfy
  \begin{equation} \label{eq:assumption:forward-backward}
    \begin{aligned} 
      \cf_\mu \circ \cb_\mu(y) & = y, &
      y & \in \bo.
      \\
      \cb_\mu \circ \cf_\mu(x) & = x, &
      x & \in \cb(\bo).
    \end{aligned}
  \end{equation}
\end{assumption}

Informally, condition \eqref{eq:assumption:forward-backward} requires that for every parameter $\mu$, the characteristics through the outflow $\bo$ can be traced backward to some part of the inflow boundary $\Gamma_-$. We ensure the richness of the transport directions as follows. Transporting a small ball $\bo[,h]$ backwards with different parameters, we obtain disjoint supports.

\begin{assumption} \label{assumption:backward-sets}
  For every $0 < h \le 1$ there are $n \sim h^{-\diml}$, $\diml \le d-1$ parameters $\mu_1, \dots, \mu_n$ such that $\cb_{\mu_i}(\bo[,h])$ are disjoint. See Figure \ref{fig:backward-sets}.
\end{assumption}

\begin{figure}[htb]
  \hfill
  \begin{subfigure}[b]{0.4\textwidth}
    \centering
    \begin{tikzpicture}[scale=0.75]

      \draw [thick]
            (0,0) 
                  .. controls (0,-1) and (3.5,-1) .. 
            (4,0) node[midway,circle,fill=blue] (m1) {} 
                  .. controls (4.5, 1) and (4.5, 1) .. 
            (4,2) 
                  .. controls (3.5, 3) and (0, 1) .. 
            (0,0) node[midway,circle,fill=blue] (m2) {} ;

      \draw [thick,red]    
            (4,0) .. controls (4.5, 1) and (4.5, 1) .. 
            (4,2)
            node[pos=0.8,right] {$\bo$}
            node[pos=0.4,circle,fill=blue] (B) {}
            node[pos=0.4,right,blue] {$\bo[,h]$};

      \draw[thick,blue,->] (B) to[out=190,in=40] node[midway,below] {$\mu_1$} (m1);
      \draw[thick,blue,->] (B) to[out=170,in=-40] node[midway,below] {$\mu_2$} (m2);

    \end{tikzpicture}
    \caption{Assumption \ref{assumption:backward-sets}.}
    \label{fig:backward-sets}
  \end{subfigure}
  \hfill
  \begin{subfigure}[b]{0.4\textwidth}
    \centering
    \begin{tikzpicture}[scale=0.75]

      \draw [thick]
            (0,0) 
                  .. controls (0,-1) and (3.5,-1) .. 
            (4,0) node[midway,circle,fill=blue] (m1) {} 
                  .. controls (4.5, 1) and (4.5, 1) .. 
            (4,2) 
                  .. controls (3.5, 3) and (0, 1) .. 
            (0,0) node[midway,circle,fill=blue] (m2) {} ;

      \draw [thick,red]    
            (4,0) .. controls (4.5, 1) and (4.5, 1) .. 
            (4,2)
            node[pos=0.8,right] {$\bo$}
            node[pos=0.4,circle,fill=blue] (B) {}
            node[pos=0.4,right,blue] {$\bo[,h]$}
            node[pos=0.3,circle,fill=brown] (f1) {}
            node[pos=0.9,circle,fill=brown] (f2) {};

      \draw[thick,blue,->] (B) to[out=190,in=40] node[pos=0.7,below] {$\mu_1$} (m1) node[below right] {$\in M_1$};
      \draw[thick,blue,->] (B) to[out=170,in=-40] node[pos=0.7,below] {$\mu_2$} (m2) node[above] {$\not\in M_2$};
      \draw[thick,brown,->] (m1) to[out=40,in=190] node[pos=0.7,below] {$\mu$} (f1);
      \draw[thick,brown,->] (m2) to[out=-40,in=170] node[pos=0.7,below] {$\mu$} (f2);

    \end{tikzpicture}
    \caption{Assumption \ref{assumption:forward-backward-sets} for $\mu \approx \mu_1$.}
    \label{fig:forward-backward-sets}
  \end{subfigure}
  \hfill~

  \caption{Illustration of assumptions.}
  
\end{figure}
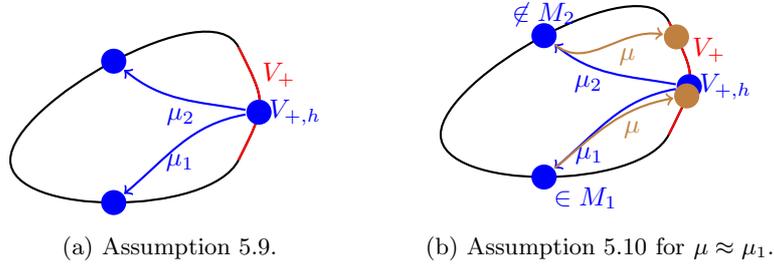

In addition, going backward with parameter $\mu_i$ and forward with $\mu$ will generally end up in different supports, unless $\mu$ is close to $\mu_i$:
\begin{assumption} \label{assumption:forward-backward-sets}
  For all $i=1, \dots, n$, the sets
  \[
    M_i := \{ \mu \in \param : \cf_\mu \circ \cb_{\mu_i}(\bo[,h]) \cap \bo[,h] \ne \emptyset \}
  \]
  are pairwise disjoint. See Figure \ref{fig:forward-backward-sets}.
\end{assumption}

Finally, we assume that the flow field is smooth in the following sense.
\begin{assumption} \label{assumption:smooth}
  For $\sm,\sp>0$, $1 \le p < \infty$, assume that the linear operator 
  \begin{align}
    & \bar{\cb}_\mu \colon W_0^{\sm,p}(\bo) \to W^{\sm,p}(\Gamma_-), & 
    \bar{\cb}_\mu g_+ & = g_+ \circ \cb_\mu
  \end{align}
 is bounded, uniformly in $\mu$.
\end{assumption}

Flow fields that satisfy all assumptions are constructed in Section \ref{sec:transport:reference-domain}. Given that we already have such a flow field, we obtain the following entropy bounds.

\begin{proposition}
  \label{prop:transport-entropy}

  Let $b$ be a flow field and $\instances(b, g_+)$ be the problem class defined in \eqref{eq:transport-class} and \eqref{eq:transport-class-parametrization-inflow}. Let Assumptions \ref{assumption:backward-forward}, \ref{assumption:backward-sets}, \ref{assumption:forward-backward-sets} and \ref{assumption:smooth} be satisfied with $\diml = d-1$. Then there is a outflow quantity of interest $g_+ \in B_{W^{\sp,\infty}(\partial \Omega})$ with
  \begin{equation*}
    \entropy_n(\Qi(\instances(b, g_+)) \gtrsim n^{-\frac{\sm+\sp+ (d-1)}{d-1}}.
  \end{equation*}

\end{proposition}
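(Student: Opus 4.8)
The plan is to follow the classical entropy lower-bound strategy from \cite{LorentzGolitschekMakovoz1996}: build an exponentially large family of well-separated quantities of interest inside $K = \Qi(\instances(b, g_+))$, parametrized by sign patterns of disjoint bump functions, and conclude that $K$ cannot be covered by too few small balls. First I would fix the scale $h$ and invoke Assumption \ref{assumption:backward-sets} to obtain $n \sim h^{-(d-1)}$ parameters $\mu_1, \dots, \mu_n$ whose backward-transported balls $\cb_{\mu_i}(\bo[,h])$ are disjoint in $\Gamma_-$. On each such ball I would place a scaled bump $\eta_i$, built by transporting a fixed reference bump on $\bo[,h]$ backward via $\bar{\cb}_{\mu_i}$; Assumption \ref{assumption:smooth} controls the $W^{\sm,p}(\Gamma_-)$ norm of each $\eta_i$, and the disjointness of supports lets me form, for every sign vector $\varepsilon \in \{-1,+1\}^{n}$, an admissible inflow datum $g_-^\varepsilon = c\, h^{\sigma} \sum_i \varepsilon_i \eta_i \in B_{W^{\sm,p}(\partial\Omega)}$, for a suitable power $\sigma$ of $h$ and constant $c$ chosen so the unit-ball constraint holds.

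Next I would compute the induced quantities of interest. Since $u_\mu$ solves the transport equation with $u_\mu|_{\Gamma_-} = g_-^\varepsilon$, the trace of $u_\mu$ on $\Gamma_+$ is $g_-^\varepsilon \circ \cb_\mu$ pulled through the characteristics, so $\qi^\varepsilon(\mu) = \int_{\Gamma_+} g_+ \, (u_{\mu})|_{\Gamma_+}$ is, up to the smooth test function $g_+$ and Jacobian factors, an integral of $g_-^\varepsilon$ against a transported copy of $g_+$. Evaluating at $\mu \approx \mu_i$ and using Assumption \ref{assumption:forward-backward-sets} (the sets $M_i$ are pairwise disjoint), only the $i$-th bump contributes near $M_i$, so $\qi^\varepsilon$ and $\qi^{\varepsilon'}$ differ by at least a fixed multiple of $\varepsilon_i - \varepsilon_i'$ times the "mass" of one transported bump whenever $\varepsilon_i \neq \varepsilon_i'$. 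Tracking how the $W^{\sp,\infty}$ smoothness of $g_+$ and the $h^{d-1}$ volume of the ball enter, one finds the pairwise $L_\infty(\param)$ separation of distinct $\qi^\varepsilon$ is $\gtrsim h^{\sm + \sp + (d-1)}$ (the exponent being exactly the numerator in the claimed rate: $\sm$ from the rescaling of the inflow bump to stay in the unit ball, $\sp$ from pairing against the smooth $g_+$, $d-1$ from the ball volume). I would need to pick $g_+$ itself — a single fixed choice, e.g. a smooth bump on $\bo$ — and check it lies in $B_{W^{\sp,\infty}(\partial\Omega)}$ and that Assumption \ref{assumption:backward-forward} makes all the backward tracings legitimate.

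From here the entropy bound is mechanical. The $2^{n}$ functions $\{\qi^\varepsilon\}$ are $\delta$-separated with $\delta \sim h^{\sm+\sp+(d-1)}$, so any cover of $K$ by balls of radius $\delta/2$ needs at least $2^{n}$ of them, giving $\entropy_{n'}(K) \gtrsim h^{\sm+\sp+(d-1)}$ whenever $2^{n'} < 2^{n}$, i.e. for $n' \lesssim n \sim h^{-(d-1)}$. Solving $h \sim (n')^{-1/(d-1)}$ yields $\entropy_{n'}(K) \gtrsim (n')^{-(\sm+\sp+(d-1))/(d-1)}$, which is the assertion after relabeling $n' \to n$. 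The main obstacle I expect is the second step: rigorously showing that evaluating $\qi^\varepsilon$ near $M_i$ isolates a single bump with a quantitatively controlled lower bound on its contribution. This requires (a) a clean formula for the solution trace on $\Gamma_+$ in terms of $\cb_\mu$ including the change-of-variables Jacobian (bounded above and below using Assumption \ref{assumption:rdom:reference-domain}'s $C^{\sb}$ control on $\rdom^{\pm 1}$), (b) ensuring the bump on $\bo[,h]$ survives transport to $\Gamma_-$ and back without its mass degenerating, and (c) verifying that the disjointness in Assumptions \ref{assumption:backward-sets} and \ref{assumption:forward-backward-sets} is uniform enough in $h$ that the separation constant does not secretly depend on $h$. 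Everything else — the combinatorial $2^n$ counting, the Sobolev norm bookkeeping for the rescaled bumps, the passage to $\entropy_n$ — is routine.
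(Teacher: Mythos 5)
Your proposal follows essentially the same route as the paper: backward-transport a fixed bump on $\bo[,h]$ through the $n\sim h^{-(d-1)}$ parameters of Assumption \ref{assumption:backward-sets}, rescale by $h^{\sm}$ (resp.\ $h^{\sp}$) to stay in the unit balls, use the disjointness of the sets $M_i$ from Assumption \ref{assumption:forward-backward-sets} to get $2^n$ pairwise separated quantities of interest with separation $\sim h^{\sm+\sp+(d-1)}$, and conclude by the standard packing-versus-covering count, exactly as in Lemmas \ref{lemma:bump-functions} and \ref{lemma:entropy-lower-bound-construction}. The only cosmetic differences are your use of sign patterns instead of $\{0,1\}$ coefficients and your worry about Jacobian factors, which the paper sidesteps because the inflow bump is defined as the pullback $\psi_h^+\circ\cf_{\mu_i}$, so that $\cf_{\mu_i}\circ\cb_{\mu_i}=\operatorname{Id}$ gives the lower bound $\int_{\Gamma_+}(\psi_h^+)^2\sim h^{d-1}$ directly.
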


The proof is based on bump functions constructed in the following lemma.

\begin{lemma} \label{lemma:bump-functions}

  Let Assumptions \ref{assumption:backward-forward}, \ref{assumption:backward-sets}, and \ref{assumption:smooth} be satisfied. Then, there are functions $g_{-,i}$, $i=1, \dots, n$ and $g_+$ such that
  \begin{align*}
    \left\| \sum_{i=1}^n \theta_i g_{-,i} \right\|_{W^{\sm,p}(\Gamma_-)} & \le 1, &
    \supp(g_{-,i}) & = \supp(g_+ \circ \cf_{\mu_i}),
    \\
    \left\| g_+ \right\|_{W^{\sp,\infty}(\Gamma_+)} & \le 1, &
    \supp(g_+) & \subset \bo[,h], 
  \end{align*}
  and
  \begin{align*}
    \ell(g_{-,i} \circ \cb_{\mu_i})
    & \sim h^{\sm+\sp+ (d-1)}, &
    \ell(v)
    & := \int_{\Gamma_+} g_+ v
  \end{align*}
  for $\sm,\sp>0$, $1 \le p < \infty$, $n \sim h^{-\diml}$, $\diml \le d-1$ and any $\theta_i \in \{0, 1\}$. In addition
  \begin{align*}
    \supp\left( \mu \to \ell(g_{-,i} \circ \cb_{\mu}) \right) & \subset M_i, &
    M_i & := \{\mu: \bo[,h] \cap \cf_\mu \circ \cb_{\mu_i}(\bo[,h]) \ne \emptyset\}
  \end{align*}
\end{lemma}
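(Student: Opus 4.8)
The plan is to build everything from a single bump function on the small outflow ball $\bo[,h]$ and to pull it back along the characteristic maps, using the three geometric assumptions to keep supports disjoint and the smoothness assumption to control Sobolev norms. Concretely, first I would fix a smooth reference bump $\phi$ on $\real^{d-1}$ supported in the unit ball $B_1$ with, say, $\int \phi \neq 0$, and set $g_+ := c\, \phi\big( \xi_+^{-1}(\cdot)/h \big)$ on $\bo[,h]$ (extended by zero to all of $\partial\Omega$), where the constant $c$ is chosen so that $\|g_+\|_{W^{\sp,\infty}(\Gamma_+)} \le 1$; a standard scaling computation gives $c \sim h^{\sp}$ since differentiating $k \le \sp$ times costs a factor $h^{-k}$ and the $L_\infty$ norm is scale invariant. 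This takes care of the second line of displayed estimates: $\supp(g_+)\subset\bo[,h]$ and $\|g_+\|_{W^{\sp,\infty}} \le 1$.

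Next I would define $g_{-,i}$ by pushing $g_+$ backward along the $i$-th characteristic map: the natural choice is $g_{-,i} := a_i\,(g_+\circ \cf_{\mu_i})$ restricted to $\Gamma_-$, where $\cf_{\mu_i} = \cb_{\mu_i}^{-1}$ on the relevant set by Assumption \ref{assumption:backward-forward}, so that $g_{-,i}\circ\cb_{\mu_i} = a_i\, g_+$ on $\bo[,h]$ (here $a_i$ is a normalizing scalar chosen below). By construction $\supp(g_{-,i}) = \supp(g_+\circ\cf_{\mu_i})$, giving the first-line support identity. Assumption \ref{assumption:backward-sets} says the sets $\cb_{\mu_i}(\bo[,h])$ are pairwise disjoint, hence the $g_{-,i}$ have disjoint supports, so for any $\theta_i\in\{0,1\}$ the norm $\big\|\sum_i \theta_i g_{-,i}\big\|_{W^{\sm,p}(\Gamma_-)}^p = \sum_i \theta_i \|g_{-,i}\|_{W^{\sm,p}}^p \le \sum_i \|g_{-,i}\|_{W^{\sm,p}}^p$. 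To bound this I use Assumption \ref{assumption:smooth}: writing $g_{-,i} = \bar{\cb}_{\mu_i}^{-1}(a_i g_+)$ — more carefully, $g_{-,i}$ agrees with $a_i g_+\circ\cf_{\mu_i}$, and $\bar{\cb}_{\mu_i}$ maps $g_+ \mapsto g_+\circ\cb_{\mu_i}$ with a uniform bound, so its inverse (the composition with $\cf_{\mu_i}$) is likewise uniformly bounded on the relevant Sobolev spaces — I get $\|g_{-,i}\|_{W^{\sm,p}(\Gamma_-)} \lesssim |a_i|\,\|g_+\|_{W^{\sm,p}(\bo[,h])} \sim |a_i|\, h^{\sp}\, h^{-\sm}\, h^{(d-1)/p}$, the last factors again from scaling of a bump on a ball of radius $h$ in $\real^{d-1}$. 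With $n \sim h^{-(d-1)}$ (the case $\diml = d-1$, or $\le$ in general) and all $a_i$ equal to a common $a$, the sum is $\lesssim n\, (|a| h^{\sp-\sm}h^{(d-1)/p})^p \sim |a|^p h^{-(d-1)} h^{(\sp-\sm)p} h^{d-1} = |a|^p h^{(\sp-\sm)p}$, so choosing $|a| \sim h^{\sm-\sp}$ makes the bound $\le 1$.

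Finally, with this choice of $a$, I compute the quantity of interest: $\ell(g_{-,i}\circ\cb_{\mu_i}) = \int_{\Gamma_+} g_+\,(g_{-,i}\circ\cb_{\mu_i}) = \int_{\Gamma_+} g_+ \cdot a\, g_+ = a\,\|g_+\|_{L_2}^2$ up to the Jacobian of $\xi_+$ (which is $\sim 1$ by Assumption \ref{assumption:rdom:reference-domain}-type bounds). Scaling gives $\|g_+\|_{L_2}^2 \sim h^{2\sp}\, h^{d-1}$, hence $\ell(g_{-,i}\circ\cb_{\mu_i}) \sim h^{\sm-\sp}\cdot h^{2\sp+(d-1)} = h^{\sm+\sp+(d-1)}$, which is exactly the claimed asymptotic. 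For the last support statement, note $\mu \mapsto \ell(g_{-,i}\circ\cb_{\mu}) = \int_{\Gamma_+} g_+ \cdot (g_{-,i}\circ\cb_{\mu})$, and $g_{-,i}\circ\cb_\mu$ is supported (after moving to the outflow side) on $\cf_\mu\circ\cb_{\mu_i}(\supp g_+) \subset \cf_\mu\circ\cb_{\mu_i}(\bo[,h])$; this meets $\supp g_+ \subset \bo[,h]$ only when $\mu \in M_i$, which is the definition given. I expect the main obstacle to be the bookkeeping of the composition operators and their inverses — making precise that $\cf_{\mu_i}$ really is the inverse of $\cb_{\mu_i}$ on the sets that matter (this is where Assumption \ref{assumption:backward-forward} and the well-definedness in \eqref{eq:inflow-to-outflow} are used) and that the boundedness in Assumption \ref{assumption:smooth} transfers to the pullback direction with constants uniform in $\mu$ and independent of $h$; the scaling estimates themselves are routine.
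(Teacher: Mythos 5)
Your construction is essentially identical to the paper's proof: the same rescaled bump on $\bo[,h]$ for $g_+$ (normalized by $h^{\sp}$), the same pullbacks $g_+\circ\cf_{\mu_i}$ (normalized so that $g_{-,i}\sim h^{\sm}\,\psi_h^+\circ\cf_{\mu_i}$) with disjoint supports from Assumption \ref{assumption:backward-sets}, the same scaling bookkeeping giving $\ell(g_{-,i}\circ\cb_{\mu_i})\sim h^{\sm+\sp+(d-1)}$ via $\cf_{\mu_i}\circ\cb_{\mu_i}=\operatorname{Id}$, and the same support argument yielding containment in $M_i$. The only caution is your side remark that boundedness of the pullback by $\cf_{\mu_i}$ follows because it is ``the inverse'' of the bounded operator $\bar{\cb}_{\mu_i}$ — bounded operators need not have bounded inverses; rather, this uniform boundedness of the composition with $\cf_{\mu_i}$ is exactly the intended content of Assumption \ref{assumption:smooth} (which the paper invokes directly), so it should be cited as an assumption, not derived.
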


\begin{proof}

All $g_{-,i}$ and $g_+$ are constructed from a bump-function $\psi: \real^{d-1} \to \real$ with $\psi \in C^\infty(\real^{d-1})$, $0 \le \psi \le 1$, $\operatorname{supp}(\psi) \subset B_1$, and $\|\psi\|_1 \sim \|\psi\|_2 \sim 1$. For $i = 1, \dots, n$, we first define the functions
\begin{align*}
  \psi_h(z) & := \psi(z/h), \\
  \psi_h^+(y) & := (\psi_h \circ \xi_+^{-1})(y), \\
  \psi_{h,i}^-(x) & := (\psi_h^+ \circ \cf_{\mu_i})(x),
\end{align*}
where $\xi_+$ is the parametrization of the outflow boundary defined in \eqref{eq:balls-outflow}. The regularity Assumption \ref{assumption:smooth} and standard scaling arguments yield
\begin{align*} 
  \|\psi_{h,i}^-\|_{W^{\sm,p}(\Gamma_-)} & \lesssim h^{-\sm+(d-1)/p}, &
  \|\psi_h^+\|_{W^{\sp,\infty}(\Gamma_+)} & \lesssim h^{-\sp}
\end{align*}
uniformly in $i$. Define
\begin{align}
  g_{-,i} & := c_{\sm} h^{\sm} \psi_{h,i}^-, &
  g_+ & := c_{\sp} h^{\sp} \psi_h^+
\end{align}
for some constants $c_{\sm}$ and $c_{\sp}$. Then, we have
\begin{equation*}
  \left\| g_+ \right\|_{W^{\sp,\infty}(\Gamma_+)} 
  = c_{\sp} h^{\sp} \left\| \psi_+ \right\|_{W^{\sp,\infty}(\Gamma_+)} 
  \le 1
\end{equation*}
for $c_{\sp}$ sufficiently small and
\begin{multline*}
  \left\| \sum_{i=1}^n \theta_i g_{-,i} \right\|_{W^{\sm,p}(\Gamma_-)}^p 
  \le c_{\sm}^p h^{ps} \sum_{i=1}^n \|\psi_{h,i}^-\big\|_{W^{\sm,p}(\Gamma_-)}^p 
  \\
  \le c c_{\sm}^p h^{ps} \sum_{i=1}^n h^{-ps + (d-1)} 
  \le c c_{\sm}^p n h^{d-1}
  \le 1
\end{multline*}
for $c_{\sm}$ sufficiently small, where we have used that all $\psi_{h,i}^-$ have disjoint support and $n \sim h^{-\diml} \le h^{-(d-1)}$ by Assumption \ref{assumption:backward-sets}. By construction, we have
\begin{align*}
  \supp(g_{-,i}) & = \supp(g_+ \circ \cf_{\mu_i}), &
  \supp(g_+) & \subset \bo[,h], 
\end{align*}
Since $\cf_{\mu_i} \circ \cb_{\mu_i} = \operatorname{Id}$
\begin{multline*}
  \ell(g_{-,i} \circ \cb_{\mu_i})
  = \int_{\Gamma_+} (g_{-,i} \circ \cb_{\mu_i}) g_+ 
  = c_{\sm} c_{\sp} h^{\sm+\sp} \int_{\Gamma_+} (\psi_h^+ \circ \cf_{\mu_i} \circ \cb_{\mu_i}) \psi_h^+   
  \\
  = c_{\sm} c_{\sp} h^{\sm+\sp} \int_{\Gamma_+} (\psi_h^+)^2
  \sim h^{\sm+\sp+ (d-1)}.
\end{multline*}
Finally, we compute the support of $\mu \to \ell(g_{-,i} \circ \cb_{\mu}) = \mu \to \int_{\Gamma_+} g_+ (g_{-,i} \circ \cb_{\mu})$. This function is non-zero only if $\supp(g_+) \cap \supp(g_{-,i} \circ \cb_{\mu}) \ne 0$. Since $g_{-,i} \circ \cb_{\mu}(y) \ne 0$ implies $\cf_{\mu_i} \circ \cb_\mu(y) \in \supp(\psi_+) \subset \bo[,h]$ equivalent to $y \in \cb_\mu^{-1} \circ \cf_{\mu_i}^{-1} (\bo[,h]) = \cf_\mu \circ \cb_{\mu_i}(\bo[,h])$, we have
\begin{equation*}
  \{\mu : \, \supp(g_+) \cap \supp(g_{-,i} \circ \cb_\mu) \ne \emptyset\}
  \subset \{\mu: \bo[,h] \cap \cf_\mu \circ \cb_{\mu_i}(\bo[,h]) \ne \emptyset\}
  = M_i,
\end{equation*}
which implies the statement in the lemma.

\end{proof}

The following result from \cite{LorentzGolitschekMakovoz1996} provides the entropy from bump functions.

\begin{lemma}[{\cite[Chapter 15, Proposition 2.3]{LorentzGolitschekMakovoz1996}}]
  \label{lemma:entropy-lower-bound-construction}
  Assume there exists functions $\qi_i \in L_\infty(\param)$, $i=1, \dots, n$ with disjoint supports such that for $\epsilon > 0$ the inequality
  \begin{equation*}
    \|\qi_i\|_\infty > \epsilon
  \end{equation*}
  holds and 
  \begin{equation*} 
    \sum_{i=1}^n \theta_i \qi_i \in K
  \end{equation*}
  for all $\theta \in \{0,1\}^n$. Then
  \begin{equation*}
    \entropy_{n-1}(K) > \epsilon.
  \end{equation*}
\end{lemma}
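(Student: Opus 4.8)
The plan is to prove the lemma by a standard separation-plus-pigeonhole argument: the hypotheses hand us $2^n$ points of $K$ that are pairwise more than $\epsilon$ apart in $L_\infty(\param)$, and such a packing cannot be covered by only $2^{n-1}$ small balls.

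First I would introduce, for each $\theta \in \{0,1\}^n$, the test function $\qi_\theta := \sum_{i=1}^n \theta_i \qi_i$; by the assumption $\sum_{i=1}^n \theta_i \qi_i \in K$, all $2^n$ of them lie in $K$. The crux is the \emph{separation estimate}: given $\theta \ne \theta'$, fix an index $i$ with $\theta_i \ne \theta_i'$. Since the supports of the $\qi_i$ are pairwise disjoint, restricted to $\supp(\qi_i)$ each sum collapses to a single term, so $\qi_\theta - \qi_{\theta'}$ coincides with $+\qi_i$ or $-\qi_i$ on $\supp(\qi_i)$; hence $\|\qi_\theta - \qi_{\theta'}\|_{L_\infty(\param)} \ge \|\qi_i\|_{L_\infty(\param)} > \epsilon$. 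So $\{\qi_\theta : \theta \in \{0,1\}^n\}$ is an $\epsilon$-separated subset of $K$ with $2^n$ elements.

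Then I would argue by contradiction: if $K$ were covered by $2^{n-1}$ cover elements of the size controlled by $\entropy_{n-1}$, then by pigeonhole (as $2^n > 2^{n-1}$) two distinct test functions $\qi_\theta, \qi_{\theta'}$ would lie in the same element and hence be at distance at most its diameter, which is bounded by the admissible radius; once that radius is $\le \epsilon$ this contradicts the separation estimate. Therefore no such cover exists and $\entropy_{n-1}(K) > \epsilon$.

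There is no genuine obstacle here: the entire argument reduces to the disjoint-support bookkeeping of the separation estimate together with an elementary counting step. The only point requiring mild care — and the reason the applications in later sections state bounds with $\gtrsim$ rather than with sharp constants — is the exact relation between the radius of a covering ball and the pairwise separation it can accommodate (a radius-$\epsilon$ ball versus a diameter-$\epsilon$ set), which affects only the constant in front of $\epsilon$, not the exponents.
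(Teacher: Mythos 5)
Your proposal is correct and takes essentially the same route as the paper: the paper proves this lemma as a special case of Lemma \ref{lemma:entropy-lower-bound-count}, whose proof is exactly your construction of the $2^n$ sums $\qi_\theta = \sum_{i=1}^n \theta_i \qi_i$, the pairwise separation via disjoint supports, and the pigeonhole step against $2^{n-1}$ balls. The radius-versus-diameter caveat you flag (a careful count gives only $\entropy_{n-1}(K) > \epsilon/2$) is equally present in the paper's own derivation through that counting lemma and is harmless, since all downstream applications use the bound only up to constants ($\gtrsim$).
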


\begin{proof}
  The statement in the reference \cite[Chapter 15, Proposition 2.3]{LorentzGolitschekMakovoz1996} is for capacity instead of entropy, but the short proof is almost unchanged. It is a special case of Lemma \ref{lemma:entropy-lower-bound-count}, below.
\end{proof}

\begin{proof}[Proof of Proposition \ref{prop:transport-entropy}]

Let $g_{-,i}$ and $g_+$ be defined as in Lemma \ref{lemma:bump-functions}. By the provided regularity of these functions, the problems
\begin{align*}
  \qi_i(\mu) & = \ell(u_\mu) = \int_{\Gamma_+} g_+ u_\mu, &
  b_\mu \nabla u_\mu & = 0, &
  u_\mu|_{\Gamma_-} & = g_{-,i}
\end{align*}
are in our problem class. We conclude the proof by Lemma \ref{lemma:entropy-lower-bound-construction} by showing that $\qi_i$ satisfy its assumptions. To this end, note that by the method of characteristics the solutions of the transport equation are given by $u_\mu(y) = g_{-,i} \circ \cb_\mu(y)$ for all $y \in \Gamma_+$ in the outflow boundary. Hence $\qi_i(\mu) = \ell(g_{-,i} \circ \cb_\mu)$ and by Lemma \ref{lemma:bump-functions}
\begin{equation*}
  \supp(\qi_i) 
  \subset \{\mu: \bo[,h] \cap \cf_\mu \circ \cb_{\mu_i}(\bo[,h]) \ne \emptyset \}
  = M_i,
\end{equation*}
which are disjoint by Assumption \ref{assumption:forward-backward-sets}. Next, using again the characteristic solution and Lemma \ref{lemma:bump-functions}, we have
\begin{equation*}
  \|\qi_i\|_Z 
  = \sup_{\mu \in \param} |\qi_i(\mu)| 
  \ge |\qi_i(\mu_i)| 
  = \ell(g_{-,i} \circ \cb_{\mu_i})
  \sim h^{\sm+\sp+(d-1)}  
  =: \epsilon.
\end{equation*}
Finally by the method of characteristics, for any $\theta_i \in \{0,1\}$, the sum
\begin{equation*}
  \sum_{i=1}^n \theta_i \qi_i
  = \sum_{i=1}^n \theta_i \ell(g_{-,i} \circ \cb_\mu)
  = \ell \left( \left( \sum_{i=1}^n \theta_i g_{-,i} \right) \circ \cb_\mu \right)
\end{equation*}
is the output functional for a transport equation with initial value $\sum_{i=1}^n \theta_i g_{-,i}$, which is contained in the problem class by Lemma \ref{lemma:bump-functions}. Thus, $\qi_i$ satisfy all assumptions of Lemma \ref{lemma:entropy-lower-bound-construction} and we obtain
\begin{equation*}
  \entropy_n(\Qi(\instances(b, g_+))) \gtrsim \epsilon = h^{\sm+\sp+(d-1)}.
\end{equation*}
Thus, the result follows from $n \sim h^{-(d-1)}$ or equivalently $h \sim n^{-\frac{1}{d-1}}$.

\end{proof}

\subsection{Entropy Bounds for Highly Parameter Dependent Flows: variable \texorpdfstring{$b$}{b}}
\label{sec:transport:entropy-b-var}

In this section, we provide entropy bounds that demonstrate the (parameter) curse of dimensionality for a class of transport problems with variable flow fields constructed from one strongly parameter dependent reference field. We start from a parameter set $\param \subset \real^{d-2+D}$ that can be split into two components $\mu = (\mua, \mub) \in \parama \times \paramb \subset \real^{d-2} \times \real^D$. The first one is used as in the last section. The second is used to show dimension dependence of the entropy.

For now, let us assume that there is a strongly parameter dependent flow field $b_{\mua, \eta}$ and defer its construction to Section \ref{sec:transport:reference-domain} below. Unlike $\mub \in \real^D$, the parameter $\eta \in \real$ is of dimension one and serves as a ``switch'' between different behaviours of $b$. The switch is triggered locally by some function $\eta(\mub)$, resulting in the parametric flow field $b(x, \mua, \eta(\mub))$ depending on the full $\real^{d-2+D}$ dimensional parameter $(\mua, \mub)$. In the first switch position $\eta = \delta := 5h$, the flow field has the same properties as in the previous section. In the alternate position $0 = \eta = \eta(\mub)$, we ensure that $\qi(\mua, \mub) = 0$ vanishes by the following assumption. This allows us to control the support of $\qi(\mua, \mub)$ similar to Lemma \ref{lemma:entropy-lower-bound-construction}.

\begin{assumption} \label{assumption:extra-dim-independent}
  For all $i=1, \dots, n$, the sets
  \[
    \{ \mu \in \param : \cf_{\mu, 0} \circ \cb_{\mu_i, \delta}(\bo[,h]) \cap \bo[,h] \ne \emptyset \} = \emptyset
  \]
  are empty. See Figure \ref{fig:assumption:extra-dim-independent}.
\end{assumption}

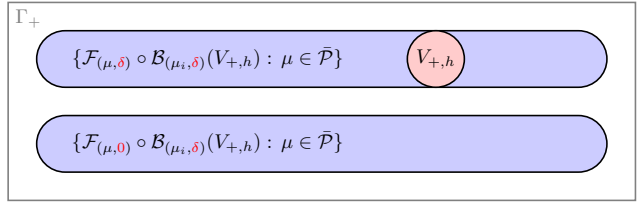
\begin{figure}[htb]

  \begin{center}
    \scalebox{0.75}{
      \begin{tikzpicture}

        \draw[thick,gray] (-0.5, 1.5) node[below right] {$\Gamma_+$} rectangle (10.5,-2);
        \draw[thick,rounded corners=0.5cm, fill=blue!20] (0,0) rectangle (10,1);
        \draw[thick,rounded corners=0.5cm, fill=blue!20] (0,-1.5) rectangle (10,-0.5);
        \draw[thick,rounded corners=0.5cm, fill=red!20] (6.5,0) rectangle (7.5,1);

        \draw (7,0.5) node {$\bo[,h]$};
        \draw (3,0.5) node {$\{\cf_{(\mu,\fignote{\delta})} \circ \cb_{(\mu_i,\fignote{\delta})}(\bo[,h]) : \, \mu \in \parama\}$};
        \draw (3,-1) node {$\{\cf_{(\mu,\fignote{0})} \circ \cb_{(\mu_i,\fignote{\delta})}(\bo[,h]) : \, \mu \in \parama\}$};
        
      \end{tikzpicture}
    }
  \end{center}
  \caption{Illustration of Assumption \ref{assumption:extra-dim-independent}.}
  \label{fig:assumption:extra-dim-independent}
  
\end{figure}
The assumption is enabled by choosing $\mua$ of dimension $d-2$, one less than in the previous section for fixed $b$. This allows us to move the balls $\bo[,h]$ in a unused dimension when changing $\eta$, which ensures that the backward and then forward characteristics for different $\eta$ always miss their origin. A detailed construction of flow fields that satisfy Assumption \ref{assumption:extra-dim-independent} is given in Section \ref{sec:transport:reference-domain}. For now, we assume it is satisfied and prove the following entropy bound.

\begin{remark}
  Since our flow field $b_{\mua, \eta}$ has two indices all quantities from the last section have so as well. In particular, we use the notation
  \begin{align*}
    \cf_{(\mua, \eta)} & = \cf_{\mua, \eta}, &
    \cb_{(\mua, \eta)} & = \cb_{\mua, \eta}. &
  \end{align*}
\end{remark}

\begin{proposition} \label{prop:transport-entropy-var}

  Let $\instances$ be the problem class defined by \eqref{eq:transport-class}, \eqref{eq:transport-class-parametrization} with variable flow field. Assume there is a parametric transport field $b_{\mua, \eta}$ with $(\mua, \eta) \in \parama \times [-1,1]$ so that
  \begin{enumerate}
    \item For frozen second parameter $\eta=\delta=5h$, $b_{\mua, 5h}$ satisfies Assumptions \ref{assumption:backward-forward}, \ref{assumption:backward-sets}, \ref{assumption:forward-backward-sets} and \ref{assumption:smooth} with parameters $(\mua_i, 5h)$, $i = 1, \dots, n$, $n \sim h^{-\diml}$, $\diml = d-2$.
    \item $b_{\mua, \eta}$ satisfies Assumption \ref{assumption:extra-dim-independent}.
    \item $\|b\|_{C^{\sb}(\Omega \times \parama \times [0,5h])} \lesssim 1$.
  \end{enumerate}
  Then
  \begin{equation*}
    \entropy_n(\Qi(\instances)) \gtrsim n^{-\frac{\sm+\sp+ (d-1)}{\max\{D/\sb, d-2\}}}.
  \end{equation*}
  
\end{proposition}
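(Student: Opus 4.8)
The plan is to rerun the bump-function argument of Lemma~\ref{lemma:bump-functions} and the counting Lemma~\ref{lemma:entropy-lower-bound-construction} along whichever of the two parameter blocks $\parama$ and $\paramb$ is ``roomier'', which is exactly why the exponent carries $\max\{D/\sb,d-2\}$ and not a product of the two dimensions.

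\emph{Using the $\mua$-block.} Freeze the switch at the constant value $\eta\equiv\delta=5h$. By item~(1) the field $b_{\cdot,\delta}$ satisfies Assumptions~\ref{assumption:backward-forward}, \ref{assumption:backward-sets}, \ref{assumption:forward-backward-sets}, \ref{assumption:smooth} with $\diml=d-2$, and by item~(3) it lies in $B_{C^{\sb}}(C_b)$; so it is an admissible flow field that happens to be constant in $\mub$. The proof of Proposition~\ref{prop:transport-entropy} then applies essentially verbatim (its only use of $\diml=d-1$ is the final substitution), producing $n\sim h^{-(d-2)}$ quantities of interest $\qi_i$ with pairwise disjoint supports, $\|\qi_i\|_Z\sim h^{\sm+\sp+(d-1)}$, and $\sum_i\theta_i\qi_i\in\Qi(\instances)$ for all $\theta\in\{0,1\}^n$; Lemma~\ref{lemma:entropy-lower-bound-construction} then gives $\entropy_{n-1}(\Qi(\instances))\gtrsim h^{\sm+\sp+(d-1)}$, which is the claimed rate whenever $\max\{D/\sb,d-2\}=d-2$.

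\emph{Using the $\mub$-block.} Here we keep a single inflow bump $g_{-,1}$ and outflow $g_+$ from Lemma~\ref{lemma:bump-functions} (adapted to $(\mua_1,\delta)$), and instead multiply test functions by localizing the switch. Choose $\rho\sim h^{1/\sb}$ and pairwise disjoint balls $B_1,\dots,B_m\subset\paramb$ with $m\sim\rho^{-D}\sim h^{-D/\sb}$, and for each a fixed $C^\infty$ profile $\eta_j\colon\paramb\to[0,\delta]$ with $\supp\eta_j\subset B_j$ and $\eta_j\equiv\delta$ on the concentric half-radius ball. The scaling $\|\eta_j\|_{C^{\sb}(\paramb)}\sim\delta\rho^{-\sb}\sim 1$ together with disjointness of the $B_j$ and item~(3) guarantees that $(x,\mua,\mub)\mapsto b(x,\mua,\sum_j\theta_j\eta_j(\mub))$ lies in $B_{C^{\sb}(\Omega\times\param)}(C_b)\cap\flows(g_+)$ for every $\theta\in\{0,1\}^m$, hence is an admissible problem. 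Set $\qi_j(\mua,\mub):=\ell(g_{-,1}\circ\cb_{(\mua,\eta_j(\mub))})$. Outside $B_j$ we have $\eta_j=0$, so Assumption~\ref{assumption:extra-dim-independent} (index $1$) forces $\qi_j=0$ there; hence the $\qi_j$ have disjoint supports contained in $\parama\times B_j$. Evaluating at the center of $B_j$, where $\eta_j=\delta$, the identity $\cf_{(\mua_1,\delta)}\circ\cb_{(\mua_1,\delta)}=\mathrm{Id}$ from Lemma~\ref{lemma:bump-functions} yields $\|\qi_j\|_Z\gtrsim h^{\sm+\sp+(d-1)}$. Finally $\sum_j\theta_j\qi_j$ equals $\Qi$ of the instance with inflow $g_{-,1}$ and switch $\sum_j\theta_j\eta_j$ (again using Assumption~\ref{assumption:extra-dim-independent} to annihilate the $\theta_j=0$ parts), so it lies in $\Qi(\instances)$. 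Lemma~\ref{lemma:entropy-lower-bound-construction} gives $\entropy_{m-1}(\Qi(\instances))\gtrsim h^{\sm+\sp+(d-1)}$ with $m\sim h^{-D/\sb}$, the claimed rate whenever $\max\{D/\sb,d-2\}=D/\sb$.

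\emph{Conclusion and main obstacle.} Taking whichever of the two constructions has the larger base exponent, i.e.\ $\max\{D/\sb,d-2\}$, gives $N\sim h^{-\max\{D/\sb,d-2\}}$ disjoint test functions of $Z$-norm $\sim h^{\sm+\sp+(d-1)}$ with all $0/1$-combinations admissible, and Lemma~\ref{lemma:entropy-lower-bound-construction} together with $h\sim N^{-1/\max\{D/\sb,d-2\}}$ finishes the proof. I expect the two genuinely delicate points to be: (i) recognizing that the $\mua$- and $\mub$-constructions cannot be combined multiplicatively --- a sum $\sum_{i,j}\theta_{i,j}\qi_{i,j}$ mixing inflow bumps $g_{-,i}$ with switches $\eta_j$ is realizable by a single problem instance only when the indicator $\theta_{i,j}$ factors, so the exponent must carry a $\max$ and not a sum; and (ii) the quantitative control of the composed flow field, which forces the matching $\rho\sim h^{1/\sb}$ so that inserting a height-$\delta=5h$ cutoff into the $C^{\sb}$-bounded reference field keeps all derivatives up to order $\sb$ bounded, and which also requires checking that the composed field still admits unique solutions through $\supp(g_+)$, so that membership in $\flows(g_+)$ and the vanishing in Assumption~\ref{assumption:extra-dim-independent} survive the $0/1$-combinations.
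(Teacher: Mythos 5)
Your proof is correct, and it reaches the stated rate by a slightly different route than the paper. You split into two cases and, in each, build a family of quantities of interest with pairwise \emph{disjoint supports} so that Lemma~\ref{lemma:entropy-lower-bound-construction} applies: varying the inflow bumps $g_{-,i}$ along $\parama$ with the switch frozen at $\eta\equiv 5h$ (reusing the proof of Proposition~\ref{prop:transport-entropy} with $\diml=d-2$), or varying localized switch profiles of height $5h$ and width $h^{1/\sb}$ along $\paramb$ with a single inflow bump, using Assumption~\ref{assumption:extra-dim-independent} to kill the quantity of interest where the switch is off. The paper instead builds one combined family $\qi_{\theta,\vartheta}$ indexed by \emph{both} the inflow pattern $\theta\in\{0,1\}^n$ and the switch pattern $\vartheta\in\{0,1\}^K$, evaluates it at the grid $(\mua_i,\mub_k)$, and invokes the separation Lemma~\ref{lemma:entropy-lower-bound-count} rather than the disjoint-support special case; this yields a log-cardinality $\sim h^{-(d-2)}+h^{-D/\sb}$, which is then bounded below by $h^{-\max\{D/\sb,d-2\}}$, so the two arguments give the same rate (the combined family only improves constants). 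Your observation (i) is consistent with this: the paper's mixed family is additive, not multiplicative, in the bit count, precisely because an instance realizes $\sum_{i,k}\theta_i\vartheta_k\,\qi_{i,k}$ only for product patterns, which is why the exponent carries $\max\{D/\sb,d-2\}$ and not a sum of dimensions. Your point (ii) — the scaling $\rho\sim h^{1/\sb}$ against the height $\delta=5h$ to keep the composed field bounded in $C^{\sb}$, and the implicit well-posedness/membership in $\flows(g_+)$ for the composed fields — is handled in the paper by exactly the same scaling computation for the $\phi_k$ and by the flow-field construction of Section~\ref{sec:transport:reference-domain}, so your level of rigor there matches the paper's.
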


We show the entropy bound by constructing the functions in the following lemma.

\begin{lemma}
  \label{lemma:entropy-lower-bound-count}
  For domain $\param$, let $K \subset L_\infty(\param)$. Assume there are elements $\qi_\theta \in K$ with $\theta$ in some finite index set $\Theta$ so that for any $\theta \ne \vartheta \in \Theta$ there is a $\mu_{\theta\vartheta} \in \param$ with
  \begin{equation*}
    |\qi_\theta(\mu_{\theta\vartheta}) - \qi_\vartheta(\mu_{\theta\vartheta})| > \epsilon.
  \end{equation*}
  Then
  \begin{equation*}
    \entropy_{\lfloor \log_2 (|\Theta|-1) \rfloor}(K) > \frac{\epsilon}{2}.
  \end{equation*}
\end{lemma}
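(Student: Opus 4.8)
The plan is to argue by contradiction, exploiting that a covering by $2^m$ balls of radius $\epsilon/2$ can only separate $2^m$ distinct elements, while the hypotheses furnish $|\Theta|$ elements that are pairwise $\epsilon$-separated in the sup norm. First I would observe that for any $\theta \ne \vartheta$, the pointwise gap $|\qi_\theta(\mu_{\theta\vartheta}) - \qi_\vartheta(\mu_{\theta\vartheta})| > \epsilon$ forces $\|\qi_\theta - \qi_\vartheta\|_{L_\infty(\param)} > \epsilon$, so the family $\{\qi_\theta\}_{\theta \in \Theta}$ is an $\epsilon$-separated set in $K$.

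Next, suppose for contradiction that $\entropy_{m}(K) \le \epsilon/2$ for $m = \lfloor \log_2(|\Theta|-1)\rfloor$; then $K$ is covered by $2^m$ balls of radius $\epsilon/2$. Two elements $\qi_\theta, \qi_\vartheta$ lying in the same ball would satisfy $\|\qi_\theta - \qi_\vartheta\| \le \epsilon$ by the triangle inequality, contradicting the strict separation $> \epsilon$ established above. Hence the map sending each $\theta$ to the index of a ball containing $\qi_\theta$ is injective, giving $|\Theta| \le 2^m$. But $m = \lfloor \log_2(|\Theta|-1)\rfloor \le \log_2(|\Theta|-1)$, so $2^m \le |\Theta| - 1 < |\Theta|$, a contradiction. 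Therefore $\entropy_m(K) > \epsilon/2$, which is the claim. (Alternatively, and equivalently, one invokes that the $2^m$-packing number of $K$ at scale $\epsilon$ exceeds $2^m$ whenever more than $2^m$ points are pairwise $\epsilon$-separated, and relates packing to covering in the standard way.)

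There is essentially no analytic obstacle here; the only point requiring a little care is the bookkeeping with the floor function and the exact radius $\epsilon/2$ versus separation $\epsilon$, which is exactly what the triangle inequality is calibrated to handle — a radius-$\epsilon/2$ ball has diameter at most $\epsilon$, so it cannot contain two points at distance strictly greater than $\epsilon$. The factor of $2$ in the conclusion and the $-1$ inside the logarithm are artifacts of this calibration and of the convention that $\entropy_m$ uses $2^m$ balls. I would also remark, to connect back to Lemma \ref{lemma:entropy-lower-bound-construction}, that the earlier lemma is recovered by taking $\Theta = \{0,1\}^n$, $\qi_\theta = \sum_i \theta_i \qi_i$, and for $\theta \ne \vartheta$ picking a coordinate $j$ where they differ and $\mu_{\theta\vartheta} \in \supp(\qi_j)$ attaining $|\qi_j| > \epsilon$; disjointness of supports then gives the required pointwise gap, and $\lfloor \log_2(2^n - 1)\rfloor = n-1$ reproduces the index shift, so the present lemma is strictly more general and the proof above specializes correctly.
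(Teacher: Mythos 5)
Your proof is correct and follows essentially the same argument as the paper: translate the pointwise gap into strict $\epsilon$-separation in $L_\infty$, then assume a radius-$\epsilon/2$ cover by at most $2^{\lfloor \log_2(|\Theta|-1)\rfloor} \le |\Theta|-1$ balls and derive a contradiction via pigeonhole and the triangle inequality. Your closing remark that Lemma \ref{lemma:entropy-lower-bound-construction} is recovered as the special case $\Theta=\{0,1\}^n$ with disjointly supported bumps matches the paper's own observation, so nothing further is needed.
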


\begin{proof}

First note that
\begin{equation*}
  \|\qi_\theta - \qi_\vartheta\|_\infty
  \ge |\qi_\theta(\mu_{\theta\vartheta}) - \qi_\vartheta(\mu_{\theta\vartheta})| 
  \ge \epsilon.
\end{equation*}
Now assume that the assertion is false and we have $\entropy_{\lfloor \log_2 (|\Theta|-1) \rfloor}(K) \le \frac{\epsilon}{2}$. Then there is a $\epsilon/2$-cover with at most $2^{\log_2(|\Theta|-1)} = |\Theta|-1$ balls with some centers $\qi_i$. Since this is one smaller than the size of $\Theta$, there must be one ball, centered at some $\bar{\qi}$, containing two different $\qi_\theta$ and $\qi_\vartheta$ for $\theta \ne \vartheta$. Thus
\begin{equation*}
  \|\qi_\theta - \qi_\vartheta\|_\infty
  \le \|\qi_\theta - \bar{\qi}\|_\infty + \|\bar{\qi} - \qi_\vartheta\|_\infty
  \le \frac{\epsilon}{2} + \frac{\epsilon}{2}
  = \epsilon.
\end{equation*}
Since this contradicts $\|\qi_\theta - \qi_\vartheta\|_\infty > \epsilon$, we must have $\entropy_{\lfloor \log_2 (|\Theta|-1) \rfloor}(K) > \frac{\epsilon}{2}$ and the lemma follows.

\end{proof}

\begin{proof} [Proof of Proposition \ref{prop:transport-entropy-var}]

\begin{itemize}

  \item \emph{Construction of quantity of interest:} The result follows from Lemma \ref{lemma:entropy-lower-bound-count} given that we can construct the necessary quantities of interest. To this end, define the inflow boundary condition $g_{-,i}$ and outflow quantity of interest $g_+$ by Lemma \ref{lemma:bump-functions} with parameters $(\mua_i, 5h) = (\mua_i, \delta)$ and $\diml = d-2$. The lemma also shows that for all $\theta \in \{0, 1\}^n$ the linear combinations
  \[
    g_{-,\theta} := \sum_{i=1}^n \theta_i g_{-,i}
  \]
  as well as $g_+$ are smooth.

  In order to define multiple transport fields, let $\phi \colon \real^D \to \real$ be a bump function
  \begin{align*}
    \phi(0) & = 1, &
    \supp(\phi) & \subset B_1, &
    \phi & \in C^\infty(\paramb), &
    \|\phi\|_{L_1(\paramb)} & \sim 1
  \end{align*}
  and $\phi_k$, $k = 1, \dots, K$ be scaled and translated bump functions with
  \begin{align*}
    \phi_k(\mub) & = \delta \phi(h^{-\frac{1}{\sb}} (\mub - \mub_k)), &
    \delta & = 5h, &
    \|\phi_k\|_{W^{\sb, \infty}(\paramb)} & \lesssim 1, &
    K & \lesssim h^{-D/\sb}
  \end{align*}
  for some grid points $\mub_k$ at distances $|\mub_k - \mub_{\bar{k}}| \ge 5 h^{\frac{1}{\sb}}$ so that all $\phi_k$ have disjoint support. For any $\vartheta \in \{0,1\}^K$ define the transport fields
  \begin{align*}
    b_{\mua, \mub}^\vartheta & := b_{\mua, \vartheta(\mub)}, &
    \vartheta(\mub) = \sum_{k=1}^K \vartheta_k \phi_k(\mub).
  \end{align*}
  Unlike the given transport filed $b_{\mua, \eta}$ with $(\mua, \eta) \in \real^{d-2+1}$, the new fields $b_{\mua,\mub}^\vartheta$ have parameter dimension $(\mua, \mub) \in \real^{d-2+D}$ of the problem class. Since all $\phi_k$ have disjoint support,
  \begin{equation*}
    |D^{\sb} \vartheta(\mub)| 
    = |D^{\sb} \phi_k(\mub)| 
    = \left| 5h \left(h^{-\frac{1}{\sb}}\right)^{\sb} (D^{\sb} \phi)\left( h^{-\frac{1}{\sb}}(\mub-\mub_k \right) \right| 
    \lesssim \|D^{\sb} \phi\|_\infty
    \lesssim 1
  \end{equation*}
  for some $k \in \{1, \dots, K\}$ or $D^{\sb} \vartheta(\mub) = 0$ if $\mub$ is not in the support. Thus by the chain rule all flow fields satisfy $\|b^\vartheta\|_{C^{\sb}(\Omega \times \parama \times \paramb)} \lesssim 1$. In conclusion, all quantities of interest
  \begin{align*}
    \qi_{\theta, \vartheta}(\mua,\mub) & = \ell(u_{\mua,\mub}) = \int_{\Gamma_+} g_+ u_{\mua,\mub}, &
    b_{\mua,\mub}^\vartheta \nabla u_{\mua,\mub} & = 0, &
    u_{\mua,\mub}|_{\Gamma_-} & = g_{-,\theta}
  \end{align*}
  are in our problem class.

  \item \emph{Evaluation of quantity of interest at grid points:} In order to show the assumptions of Lemma \ref{lemma:entropy-lower-bound-count}, we first compute $\qi_{\theta, \vartheta}(\mua_i, \mub_k)$. Given the pair $(i,k)$, first assume that $\vartheta_k = 1$ so that $\vartheta(\mub_k) = \sum_{\bar{k}=1}^K \vartheta_{\bar{k}} \phi_{\bar{k}}(\mub_k) = \phi_k(\mub_k) = \delta$. Then the characteristic solution is given by $u_{\mua_i, \mub_k} = g_{-,\theta} \circ \cb_{\mua_i,\delta}$ and the quantity of interest by
  \begin{equation*}
    \qi_{\theta,\vartheta}(\mua_i, \mub_k) 
    = \ell(u_{\mua_i, \mub_k} )
    = \ell \left( g_{-,\theta} \circ \cb_{\mua_i,\delta} \right)
    = \sum_{\bar{\imath}=1}^n \theta_{\bar{\imath}} \ell \left( g_{-,\bar{\imath}} \circ \cb_{\mua_i,\delta} \right).
  \end{equation*}
  By Lemma \ref{lemma:bump-functions}, with parameter $\mu_i = (\mua_i, \delta)$ and Assumption \ref{assumption:forward-backward-sets} the summands $\ell \left( g_{-,\bar{\imath}} \circ \cb_{\mua_i,\delta} \right)$ are zero if $\bar{\imath} \ne i$ and $\sim h^{\sm+\sp+ (d-1)}$ if $\bar{\imath} = i$. Hence
  \begin{equation*}
    \qi_{\theta,\vartheta}(\mua_i, \mub_k) 
    \sim \theta_i h^{\sm+\sp+ (d-1)}.
  \end{equation*}
  On the other hand, if $\vartheta_k = 0$, we have $\eta(\mub_k) = 0$ and thus
  \begin{equation*}
    \qi_{\theta,\vartheta}(\mua_i, \mub_k) 
    = \ell(u_{\mua_i, \mub_k} )
    = \ell \left( g_{-,\theta} \circ \cb_{\mua_i,0} \right)
    = \sum_{\bar{\imath}=1}^n \theta_{\bar{\imath}} \ell \left( g_{-,\bar{\imath}} \circ \cb_{\mua_i,0} \right) = 0,
  \end{equation*}
  where we have used that $g_{-,\bar{\imath}}$ is constructed from parameters $(\mua_{\bar{\imath}}, \delta)$ and by Lemma \ref{lemma:entropy-lower-bound-count} we have $\supp \left( (\mua,0) \to \ell(g_{-,\bar{\imath}} \circ \cb_{\mua,0}) \right) \subset \bo[,h] \cap \cf_{\mua,0} \circ \cb_{\mua_{\bar{\imath}}, \delta}(\bo[,h])$, which is empty by Assumption \ref{assumption:extra-dim-independent}. In summary
  \begin{equation*}
    \qi_{\theta,\vartheta}(\mua_i, \mub_k) 
    \sim \left\{ \begin{array}{ll}
      h^{\sm+\sp+ (d-1)} & \theta_i \ne 0\text{ and }\vartheta_k \ne 0 \\
      0 & \text{else}.
    \end{array} \right.
  \end{equation*}
  
  \item \emph{Assumptions of Lemma \ref{lemma:entropy-lower-bound-count}:} We show that the assumptions of Lemma \ref{lemma:entropy-lower-bound-count} are satisfied for the parameters $\mu_{\theta\vartheta} = (\mua_i, \mub_k)$ for $i$ and $k$ chosen as follows.
  
  Let $(\theta, \vartheta) \ne (\bar{\theta}, \bar{\vartheta})$ be two parameters with $\theta, \vartheta, \bar{\theta}, \bar{\vartheta}$ non-zero. If $\theta \ne \bar{\theta}$, there is a $i$ with $\theta_i = 1$, $\bar{\theta}_i = 0$ and a $k$ with $\vartheta_k = 1$. Likewise, if $\vartheta \ne \bar{\vartheta}$, there is a $k$ with $\vartheta_k = 1$, $\bar{\vartheta}_k = 0$ and a $i$ with $\theta_i = 1$. Therefore, there is an index pair $(i,k)$ with $\theta_i = 1$, $\vartheta_k=1$ and either $\bar{\theta}_i =0$ or $\bar{\vartheta}_k = 0$. Thus
  \begin{align*}
    \qi_{\theta,\vartheta}(\mua_i, \mub_k) 
    & \sim h^{\sm+\sp+ (d-1)} =: \epsilon, &
    \qi_{\bar{\theta},\bar{\vartheta}}(\mua_i, \mub_k) 
    & = 0.
  \end{align*}
  and the assumption of Lemma \ref{lemma:entropy-lower-bound-count} is satisfied.
  
  \item \emph{Conclusion:} Since there are
  \begin{equation*}
    \sim 2^{h^{-D/\sb}} 2^{h^{-(d-2)}}
    = 2^{h^{-D/\sb} + h^{-(d-2)}}
    \ge 2^{h^{-\max\{D/\sb, d-2\}}}
  \end{equation*}
  pairs $(\theta, \vartheta)$ in total, Lemma \ref{lemma:entropy-lower-bound-count} implies
  \begin{equation*}
    \entropy_{\lfloor h^{-\max\{D/\sb, d-2\}} - 1 \rfloor} \gtrsim h^{\sm+\sp+ (d-1)}.
  \end{equation*}
  Defining $n := \lfloor h^{-\max\{D/\sb, d-2\}} - 1 \rfloor$ and eliminating $h$ shows the lemma.

\end{itemize}

\end{proof}

\subsection{Reference Domains}
\label{sec:transport:reference-domain}

We have constructed lower entropy bounds given that the flow fields have sufficient parameter dependence, measured by its transport of small balls between inflow and outflow boundary. In this section, we construct some flow fields that satisfy all requirements. We do so first on a reference domain $[-1,1]^d$ that is mapped to a subset of $\Omega$ by the map $\rdom$ in Assumption \ref{assumption:rdom:reference-domain}.

The following parameter dependent version of $\rdom$ is used to define characteristics.

\begin{lemma} \label{lemma:rdom:one-to-one}
  Let $\param, W \subset \frac{1}{2} [-1,1]^{d-1}$. Then
  \begin{equation*}
    \rdom_\mu(t,w) = \rdom(t, (1-t)(w+\mu) + tw)
  \end{equation*}
  has range contained in $\Omega$ and is one-to-one.
  
\end{lemma}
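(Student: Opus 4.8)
The plan is to reduce everything to the properties of $\rdom$ in Assumption~\ref{assumption:rdom:reference-domain}, by recognising $\rdom_\mu$ as $\rdom$ precomposed with a harmless affine change of the base variables. First I would simplify the inner argument: for every $t$ one has $(1-t)(w+\mu)+tw = w+(1-t)\mu$, so that $\rdom_\mu(t,w)=\rdom\bigl(t,\,w+(1-t)\mu\bigr) = (\rdom\circ\Phi_\mu)(t,w)$ with $\Phi_\mu(t,w):=(t,\,w+(1-t)\mu)$. (As a sanity check this also explains the geometry behind the later maps $\cf_\mu,\cb_\mu$: at $t=1$ we get $\rdom(1,w)\in\bo$ independently of $\mu$, while at $t=0$ we get $\rdom(0,w+\mu)\in\bi$, which does depend on $\mu$.) It will then suffice to show that $\Phi_\mu$ maps the relevant base domain $[0,1]\times W$ into the domain $[-1,1]\times[-1,1]^{d-1}$ of $\rdom$ and is one-to-one there; the claims of the lemma follow at once since $\ran(\rdom)\subset\Omega$ and $\rdom$ is injective.

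For the mapping property, the first coordinate is untouched and $[0,1]\subset[-1,1]$; for the second, $t\in[0,1]$ gives $1-t\in[0,1]$, hence each component satisfies $|(1-t)\mu_j|\le|\mu_j|\le\tfrac12$ because $\mu\in\param\subset\tfrac12[-1,1]^{d-1}$, and adding $w\in W\subset\tfrac12[-1,1]^{d-1}$ keeps $w+(1-t)\mu$ inside $[-1,1]^{d-1}$. For injectivity of $\Phi_\mu$, if $\Phi_\mu(t,w)=\Phi_\mu(t',w')$ then equating the first coordinates forces $t=t'$, after which $w+(1-t)\mu=w'+(1-t)\mu$ yields $w=w'$ (for fixed $t$ the second coordinate is just a translation in $w$). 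Composing the injective $\Phi_\mu$ with the injective $\rdom$ gives that $\rdom_\mu$ is one-to-one, with range contained in $\ran(\rdom)\subset\Omega$.

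I do not anticipate a real obstacle; the proof is essentially bookkeeping. The one point worth stating explicitly at the outset is the admissible range of $t$: one needs $|1-t|\le 1$ so that the ``half-cube'' budgets of $W$ and $\param$ add up to the full cube $[-1,1]^{d-1}$, which is exactly why the characteristic time runs over $[0,1]$ (from the inflow slice $\bi$ at $t=0$ to the outflow slice $\bo$ at $t=1$); if $t$ were allowed to range over all of $[-1,1]$ the factor $1-t$ could reach $2$ and the range inclusion would break. I would therefore fix the domain of $\rdom_\mu$ to be $[0,1]\times W$ from the start and carry out the above verification on it.
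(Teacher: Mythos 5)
Your proof is correct and takes essentially the same route as the paper's: the range inclusion is verified by bounding the second argument of $\rdom$ (you simplify $(1-t)(w+\mu)+tw$ to $w+(1-t)\mu$, the paper does not, but it is the same estimate), and injectivity of $\rdom_\mu$ for fixed $\mu$ follows from injectivity of $\rdom$ plus cancellation at fixed $t$. Your explicit restriction of the time variable to $t\in[0,1]$ is a reasonable point of care that the paper leaves implicit (it simply asserts the pairs lie in the domain of $\rdom$), but it does not constitute a different argument.
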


\begin{proof}
  By definition all pairs $(t, (1-t)(w+\mu) + tw)$ are contained in the domain $[-1,1] \times [-1,1]^{d-1}$ of $\rdom$ and thus their image is contained in $\Omega$. To show uniqueness, assume
  \begin{equation*}
    \rdom(t, (1-t)(w+\mu) + tw)
    = \rdom(\bar{t}, (1-\bar{t})(\bar{w}+\mu) + \bar{t}\bar{w})
  \end{equation*}
  Since $\rdom$ is one-to-one, we have
  \begin{align*}
    t & = \bar{t}, &
    (1-t)(w+\mu) + tw & = (1-\bar{t})(\bar{w}+\mu) + \bar{t}\bar{w}.
  \end{align*}
  With $t=\bar{t}$, the second identity implies
  \begin{align*}
    (1-t)(w+\mu) + tw & = (1-t)(\bar{w}+\mu) + t\bar{w} \\
    \Leftrightarrow (1-t)w + t w & = (1-t)\bar{w} + t \bar{w} \\
    \Leftrightarrow w & = \bar{w}.
  \end{align*}
\end{proof}

For abbreviation, we denote the restrictions to inflow and outflow boundary by
\begin{align*}
  \xi_{-,\mu} & := \rdom_\mu(0,\cdot), &
  \xi_{+,\mu} & := \rdom_\mu(1,\cdot).
\end{align*}
The next lemma defines a flow field with corresponding characteristics.

\begin{lemma} \label{lemma:rdom:characteristics}
  For $(t,x) \in \ran(\rdom) \cap \Gamma_-$, define
  \begin{align*}
    X_\mu(t,x) & := \rdom_\mu(t, \xi_{-,\mu}^{-1}(x)), &
    b_\mu(t,x) & = \left[ (\partial_t \rdom_\mu) \circ \rdom_\mu^{-1} \right] (t,x).
  \end{align*}
  Then $X_\mu$ satisfies the characteristic equation
  \begin{align*}
    \frac{d}{dt} X_\mu(t,x) & = b_\mu(X_\mu(t,x)), &
    X_\mu(0,x) & = x
  \end{align*}
\end{lemma}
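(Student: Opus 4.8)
The lemma is at heart a change-of-variables statement: by construction the curve $t \mapsto X_\mu(t,x)$ is the image under $\rdom_\mu$ of a straight line in the reference domain, and $b_\mu$ is the corresponding push-forward of the constant ``time'' direction $\partial_t$. The plan is therefore to verify the initial condition and the ODE by simply unwinding the definitions, the only nontrivial input being the injectivity of $\rdom_\mu$ from Lemma \ref{lemma:rdom:one-to-one}, which is exactly what turns $\rdom_\mu^{-1}$, and hence $b_\mu$, into a genuine single-valued function.

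First I would fix a starting point $x$ on the part of the inflow boundary where $\xi_{-,\mu}^{-1}$ is defined and abbreviate $w_0 := \xi_{-,\mu}^{-1}(x)$; this is legitimate because $\xi_{-,\mu} = \rdom_\mu(0,\cdot)$ is one-to-one, being the restriction of the one-to-one map $\rdom_\mu$. With this notation $X_\mu(t,x) = \rdom_\mu(t,w_0)$, so the initial condition is immediate:
\[
  X_\mu(0,x) = \rdom_\mu(0,w_0) = \xi_{-,\mu}(w_0) = x .
\]

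For the ODE, note that $\rdom \in C^{\sb}$ with $\sb \ge 1$ makes $\rdom_\mu$ at least $C^1$, so differentiating in $t$ gives $\frac{d}{dt} X_\mu(t,x) = (\partial_t \rdom_\mu)(t,w_0)$, since $w_0$ does not depend on $t$. On the other hand $X_\mu(t,x) = \rdom_\mu(t,w_0)$ lies in $\ran(\rdom_\mu)$, and by injectivity $\rdom_\mu^{-1}(X_\mu(t,x)) = (t,w_0)$, hence
\[
  b_\mu(X_\mu(t,x)) = \bigl[(\partial_t \rdom_\mu)\circ \rdom_\mu^{-1}\bigr](X_\mu(t,x)) = (\partial_t \rdom_\mu)(t,w_0) = \frac{d}{dt} X_\mu(t,x),
\]
which is the claimed characteristic equation. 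I would also record that this identity holds for those $t$ with $(t,w_0)$ in the domain of $\rdom$, e.g. $t \in [-1,1]$ under the size restrictions of Lemma \ref{lemma:rdom:one-to-one}.

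I do not anticipate a real obstacle: once the right variable $w_0$ is introduced, the computation is two lines. The only points that merit a word of care are (i) that $b_\mu$ is well defined, which is precisely what the injectivity in Lemma \ref{lemma:rdom:one-to-one} supplies, and (ii) that the characteristic curve never leaves $\ran(\rdom_\mu)$ so that $b_\mu$ may be evaluated along it — but this is automatic, since $X_\mu(t,x)$ is by construction of the form $\rdom_\mu(t,w_0)$.
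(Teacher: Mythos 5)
Your proof is correct and follows essentially the same route as the paper's: abbreviate $w = \xi_{-,\mu}^{-1}(x)$, differentiate $X_\mu(t,x)=\rdom_\mu(t,w)$ in $t$, and use the injectivity of $\rdom_\mu$ to rewrite $(\partial_t \rdom_\mu)(t,w)$ as $b_\mu(X_\mu(t,x))$. The extra remarks on well-definedness of $\rdom_\mu^{-1}$ and on the curve staying in $\ran(\rdom_\mu)$ are consistent with, and slightly more explicit than, the paper's argument.
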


\begin{proof}
  
The initial value $X_\mu(0,x) = x$ follows directly from the definition of $\xi_{-,\mu}$. Abbreviating $w = \xi_{-,\mu}^{-1}(x)$, the differential equation follows from
\begin{align*}
  \frac{d}{dt} X_\mu(t,x)
  & = (\partial_t \rdom_\mu)(t, w)
  \\
  & = (\partial_t \rdom_\mu) \circ \rdom_\mu^{-1} \circ \rdom_\mu (t, w)
  \\
  & = (\partial_t \rdom_\mu) \circ \rdom_\mu^{-1} (X_\mu (t, x))
  \\
  & = b_\mu(X_\mu(t,x)).
\end{align*}

\end{proof}

The characteristic maps from inflow to outflow and back are explicitly given by $\xi_{-,\mu}$ and $\xi_{+,\mu}$.

\begin{lemma} \label{lemma:rdom:inflow-to-outflow}
  Let $\cf_\mu$ and $\cb_\mu$ be the characteristic maps between inflow and outflow boundary defined in \eqref{eq:inflow-to-outflow} for the characteristics defined in Lemma \ref{lemma:rdom:characteristics}. Then
  \begin{align*}
    \cf_\mu & = \xi_{+,\mu} \circ \xi_{-,\mu}^{-1}, & 
    \cb_\mu & = \xi_{-,\mu} \circ \xi_{+,\mu}^{-1}.
  \end{align*}
\end{lemma}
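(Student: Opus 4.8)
The plan is to obtain both identities by substituting the explicit formula for the characteristics from Lemma~\ref{lemma:rdom:characteristics} and evaluating at the endpoints $t=0$ and $t=1$. As preparation, I would record three facts about the boundary restrictions $\xi_{-,\mu}=\rdom_\mu(0,\cdot)$ and $\xi_{+,\mu}=\rdom_\mu(1,\cdot)$. First, from $\rdom_\mu(t,w)=\rdom(t,(1-t)(w+\mu)+tw)$ we get $\xi_{-,\mu}(w)=\rdom(0,w+\mu)$ and $\xi_{+,\mu}(w)=\rdom(1,w)$, so $\xi_{+,\mu}=\xi_+$ does not even depend on $\mu$. Second, since $\rdom$ is one-to-one (Assumption~\ref{assumption:rdom:reference-domain}, cf.\ Lemma~\ref{lemma:rdom:one-to-one}), both $\xi_{-,\mu}$ and $\xi_{+,\mu}$ are injective, hence invertible onto their images, so $\xi_{-,\mu}^{-1}$ and $\xi_{+,\mu}^{-1}$ are well defined. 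Third, because $w+\mu\in[-1,1]^{d-1}$ whenever $w,\mu\in\tfrac12[-1,1]^{d-1}$, the image of $\xi_{-,\mu}$ lies in $\bi\subset\Gamma_-$ and the image of $\xi_{+,\mu}$ in $\bo\subset\Gamma_+$ by Assumption~\ref{assumption:rdom:reference-domain}.

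Now fix $x$ in the inflow portion $\xi_{-,\mu}(W)\subset\Gamma_-$ and set $w_0:=\xi_{-,\mu}^{-1}(x)$. By Lemma~\ref{lemma:rdom:characteristics} the characteristic through $x$ is $t\mapsto X_\mu(t;x)=\rdom_\mu(t,w_0)$, and
\begin{align*}
  X_\mu(0;x) &= \rdom_\mu(0,w_0) = \xi_{-,\mu}(w_0) = x \in \Gamma_-, &
  X_\mu(1;x) &= \rdom_\mu(1,w_0) = \xi_{+,\mu}(w_0) \in \bo \subset \Gamma_+.
\end{align*}
Hence $\xi_{+,\mu}\circ\xi_{-,\mu}^{-1}(x)=\xi_{+,\mu}(w_0)$ belongs to the intersection $\Gamma_+\cap\{X_\mu(t;x):t\in\real\}$ defining $\cf_\mu(x)$ in \eqref{eq:inflow-to-outflow}. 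To upgrade ``belongs to'' to equality I must rule out that the characteristic meets $\Gamma_+$ at any other time: since $\rdom$ is one-to-one, $\rdom(t,a)\in\bo=\rdom(1,[-1,1]^{d-1})$ forces $t=1$, so the curve meets $\bo$ only at $t=1$; together with the reference-domain geometry (the curve stays inside $\ran(\rdom)$, which meets $\partial\Omega$ only along $\bi$ and $\bo$) this shows the intersection is the single point $\xi_{+,\mu}(w_0)$, so $\cf_\mu(x)=\xi_{+,\mu}\circ\xi_{-,\mu}^{-1}(x)$. The identity for $\cb_\mu$ is the time-reversed mirror image: for $y\in\bo\subset\Gamma_+$ put $w_0:=\xi_{+,\mu}^{-1}(y)$; the characteristic through $y$ is again $t\mapsto\rdom_\mu(t,w_0)$, now with $X_\mu(1;y)=y$ and $X_\mu(0;y)=\xi_{-,\mu}(w_0)=\xi_{-,\mu}\circ\xi_{+,\mu}^{-1}(y)\in\Gamma_-$, and the same single-valuedness argument gives $\cb_\mu(y)=\xi_{-,\mu}\circ\xi_{+,\mu}^{-1}(y)$.

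Apart from this one geometric point --- that a characteristic touches $\Gamma_-$ only at $t=0$ and $\Gamma_+$ only at $t=1$, so that $\cf_\mu$ and $\cb_\mu$ are genuinely single-valued --- the argument is pure substitution, so I expect that to be the only real obstacle; it can alternatively be absorbed into the single-valuedness already required in Assumption~\ref{assumption:backward-forward}. As a consistency check, composing the two formulas yields $\cf_\mu\circ\cb_\mu=\xi_{+,\mu}\circ\xi_{-,\mu}^{-1}\circ\xi_{-,\mu}\circ\xi_{+,\mu}^{-1}=\operatorname{Id}$ on $\bo$ and likewise $\cb_\mu\circ\cf_\mu=\operatorname{Id}$ on $\cb_\mu(\bo)$, matching \eqref{eq:assumption:forward-backward}.
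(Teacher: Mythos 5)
Your proposal is correct and follows essentially the same route as the paper: substitute the explicit parametrization from Lemma \ref{lemma:rdom:characteristics}, evaluate the characteristic at $t=0$ and $t=1$, and read off $\cb_\mu=\xi_{-,\mu}\circ\xi_{+,\mu}^{-1}$ and $\cf_\mu=\xi_{+,\mu}\circ\xi_{-,\mu}^{-1}$ (the paper derives $\cb_\mu$ first and obtains $\cf_\mu$ as its inverse, a cosmetic difference). Your additional care about single-valuedness of the boundary intersection is a point the paper leaves implicit (effectively deferring to Assumption \ref{assumption:backward-forward}), so it is a welcome but not divergent refinement.
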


\begin{proof}
  For $y \in \ran(\xi_{+,\mu}) \subset \bo \subset \Gamma_+$, we have
  \begin{equation*}
    y 
    = \rdom_\mu(1, \xi_{+,\mu}^{-1}(y))
    = \rdom_\mu(1, \xi_{-,\mu}^{-1} \circ \xi_{-,\mu} \circ \xi_{+,\mu}^{-1}(y))
    = X_\mu(1, \xi_{-,\mu} \circ \xi_{+,\mu}^{-1}(y))
  \end{equation*}
  Since the latter is the characteristic originating at $\xi_{-,\mu} \circ \xi_{+,\mu}^{-1}(y)$, we have $\cb_\mu = \xi_{-,\mu} \circ \xi_{+,\mu}^{-1}$. Since $\cf_\mu$ is the inverse of $\cb_\mu$, the result follows.
\end{proof}

The following technical lemma is used to show Assumptions \ref{assumption:forward-backward-sets} and \ref{assumption:extra-dim-independent} later.

\begin{lemma} \label{lemma:rdom:forward-backward}
  Let $\cf_\mu$ and $\cb_\mu$ be the characteristic maps between inflow and outflow boundary defined in \eqref{eq:inflow-to-outflow} for the characteristics defined in Lemma \ref{lemma:rdom:characteristics}. For centered Euclidean ball $B_h$ of radius $h \le 1/2$, let $\bo[,h] = \xi_{+,\mu}(B_h)$. Then $\bo[,h]$ is independent of $\mu$ and  
  \begin{align*}
    \cf_\mu \circ \cb_{\mu_i}(\bo[,h]) \cap \bo[,h] & \ne \emptyset &
    & \Leftrightarrow &
    (B_h + \mu_i) \cap (B_h + \mu) & \ne \emptyset.
  \end{align*}
  for any $\mu \in \param$.
  
\end{lemma}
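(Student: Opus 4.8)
The plan is to reduce the statement to an elementary translation computation using the explicit formulas for the characteristic maps from Lemma~\ref{lemma:rdom:inflow-to-outflow}, $\cf_\mu = \xi_{+,\mu} \circ \xi_{-,\mu}^{-1}$ and $\cb_\mu = \xi_{-,\mu} \circ \xi_{+,\mu}^{-1}$, together with the explicit boundary traces of $\rdom_\mu$ from Lemma~\ref{lemma:rdom:one-to-one}. First I would compute these traces: setting $t=1$ in $\rdom_\mu(t,w) = \rdom(t,(1-t)(w+\mu)+tw)$ gives $\xi_{+,\mu}(w) = \rdom(1,w)$, which is \emph{independent of $\mu$}; write $\xi_+ := \rdom(1,\cdot)$. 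Setting $t=0$ gives $\xi_{-,\mu}(w) = \rdom(0,w+\mu)$, i.e.\ $\xi_{-,\mu} = \xi_- \circ (\,\cdot + \mu)$ with $\xi_- := \rdom(0,\cdot)$, hence $\xi_{-,\mu}^{-1}(x) = \xi_-^{-1}(x) - \mu$ on $\ran(\xi_{-,\mu})$. In particular $\bo[,h] = \xi_{+,\mu}(B_h) = \xi_+(B_h)$ carries no dependence on $\mu$, which is the first claim of the lemma.

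Next I would chase the composition $\cf_\mu \circ \cb_{\mu_i}$ applied to $\bo[,h]$. Because $\xi_{+,\mu_i} = \xi_+$, the inner map yields $\cb_{\mu_i}(\bo[,h]) = \xi_{-,\mu_i}\bigl(\xi_+^{-1}(\xi_+(B_h))\bigr) = \xi_{-,\mu_i}(B_h) = \xi_-(B_h+\mu_i)$. Applying $\cf_\mu = \xi_+ \circ \xi_{-,\mu}^{-1}$ and using $\xi_{-,\mu}^{-1}(\xi_-(z)) = z-\mu$, the two copies of $\xi_-$ cancel and the translations combine, so that $\cf_\mu \circ \cb_{\mu_i}(\bo[,h]) = \xi_+\bigl(B_h + \mu_i - \mu\bigr)$, while $\bo[,h] = \xi_+(B_h)$. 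Since $\xi_+ = \rdom(1,\cdot)$ is a restriction of the one-to-one map $\rdom$, it is injective, so the two images intersect iff $(B_h + \mu_i - \mu)\cap B_h \ne \emptyset$; translating both sets by $\mu$ turns this into $(B_h+\mu_i)\cap(B_h+\mu)\ne\emptyset$, which is the asserted equivalence.

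The step requiring genuine care is the bookkeeping of domains, since $\xi_{-,\mu}^{-1}$, $\xi_+^{-1}$ and $\rdom$ are only defined on their respective ranges: I must verify that each intermediate argument — the shifted points $z+\mu_i$, the image $\xi_-(z+\mu_i)$ fed into $\xi_{-,\mu}^{-1}$, and the resulting point $z+\mu_i-\mu$ fed into $\xi_+$ — actually lies in the relevant domain. This is exactly what the hypotheses $h \le 1/2$ and $\param, W \subset \tfrac12[-1,1]^{d-1}$ provide: whenever $(B_h+\mu_i)\cap(B_h+\mu)\ne\emptyset$, a common point $p$ satisfies $p-\mu \in B_h \subset \tfrac12[-1,1]^{d-1}$, so $\xi_-(p)\in\ran(\xi_{-,\mu})$ and the forward continuation is defined, and conversely if some $y\in\bo[,h]$ has a defined image $\cf_\mu(\cb_{\mu_i}(y))\in\bo[,h]$ then injectivity of $\xi_+$ forces the ball intersection. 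Under the standing convention that $\cf_\mu\circ\cb_{\mu_i}$ is read on its natural domain, so that a point of $\bo[,h]$ contributes only when its forward characteristic is well defined, these checks close the argument; apart from them it is a direct substitution.
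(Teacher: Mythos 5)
Your proposal is correct and follows essentially the same route as the paper: both exploit that $\xi_{+,\mu}=\rdom(1,\cdot)$ is $\mu$-independent, plug in $\cf_\mu=\xi_{+,\mu}\circ\xi_{-,\mu}^{-1}$ and $\cb_{\mu_i}=\xi_{-,\mu_i}\circ\xi_{+,\mu_i}^{-1}$ from Lemma \ref{lemma:rdom:inflow-to-outflow}, and use injectivity of $\rdom$ together with the translation structure $\xi_{-,\mu}(w)=\rdom(0,w+\mu)$ to reduce the intersection condition to $(B_h+\mu_i)\cap(B_h+\mu)\ne\emptyset$; you merely phrase the cancellation as the explicit identity $\cf_\mu\circ\cb_{\mu_i}(\bo[,h])=\xi_+(B_h+\mu_i-\mu)$ instead of the paper's chain of equivalences. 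Your additional domain bookkeeping using $h\le 1/2$ and $\param,W\subset\tfrac12[-1,1]^{d-1}$ is a point the paper leaves implicit, so it is a welcome refinement rather than a deviation.
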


\begin{proof}

Since $\xi_{+,\mu} = \rdom_\mu(1,\cdot) = \rdom(1, \cdot)$ is independent of $\mu$, so is $\bo[,h] = \xi_{+,\mu}(B_h)$. For any $\mu$ we have 
\begin{align*}
  \cf_\mu \circ \cb_{\mu_i}(\bo[,h]) \cap \bo[,h] & \ne \emptyset  \\
  \Leftrightarrow \cf_\mu \circ \cb_{\mu_i} \circ \xi_{+,\mu_i}(B_h) \cap \xi_{+,\mu}(B_h) & \ne \emptyset  \\
  \Leftrightarrow (\xi_{+,\mu} \circ \xi_{-,\mu}^{-1}) \circ (\xi_{-,\mu_i} \circ \xi_{+,\mu_i}^{-1}) \circ \xi_{+,\mu_i}(B_h) \cap \xi_{+,\mu}(B_h) & \ne \emptyset  \\
  \Leftrightarrow \xi_{-,\mu}^{-1} \circ \xi_{-,\mu_i} (B_h) \cap B_h & \ne \emptyset,
\end{align*}
where in the first step we have used that $\bo[,h] = \xi_{+,\mu}(B_h) = \xi_{+,\mu_i}(B_h)$ because $\xi_{+,\mu}$ is independent of $\mu$. In the second step we have plugged in the forward/backward characteristics from lemma \ref{lemma:rdom:inflow-to-outflow} and in the last step we have canceled the first $\xi_{+,\mu}$ because it is one-to-one. Since also $\xi_{-, \mu}$ is one-to-one, the last expression is equivalent to
\begin{align*}
  \xi_{-,\mu}^{-1} \circ \xi_{-,\mu_i} (B_h) \cap B_h & \ne \emptyset \\
  \Leftrightarrow \xi_{-,\mu_i} (B_h) \cap \xi_{-,\mu} (B_h) & \ne \emptyset \\
  \Leftrightarrow \rdom(0, B_h + \mu_i) \cap \rdom(0, B_h + \mu) & \ne \emptyset \\
  \Leftrightarrow B_h + \mu_i \cap B_h + \mu & \ne \emptyset, \\
\end{align*}
which shows the result.

\end{proof}

The first main result of this section provides a flow field for Proposition \ref{prop:transport-entropy}.

\begin{lemma} \label{lemma:rdom:assumptions}
  
  Assume there is a map $\rdom$ as defined in Assumption \ref{assumption:rdom:reference-domain}. Let $h\ge 0$. Then, there is a parametric flow field $b_\mu$ with $\|b\|_{C^{\sb}(\Omega \times \param)} \lesssim 1$ for $\param = [-1,1]^{d-1}$ and parameters $\mu_i$, $i=1, \dots, n$ with $n \sim h^{-(d-1)}$ such that Assumptions \ref{assumption:backward-forward}, \ref{assumption:backward-sets}, \ref{assumption:forward-backward-sets} and \ref{assumption:smooth} are satisfied with $\diml = (d-1)$.

\end{lemma}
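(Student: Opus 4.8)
\emph{Proof sketch.}
The plan is to build the flow field directly from the reference map $\rdom$ via Lemmas~\ref{lemma:rdom:one-to-one}--\ref{lemma:rdom:forward-backward}, and then verify the four assumptions in turn. A useful simplification is $(1-t)(w+\mu)+tw = w+(1-t)\mu$, so that $\rdom_\mu(t,w)=\rdom(t,\,w+(1-t)\mu)$; in particular $\xi_{+,\mu}=\rdom(1,\cdot)$ is independent of $\mu$ and $\xi_{-,\mu}(w)=\rdom(0,\,w+\mu)$ is a translate of $\rdom(0,\cdot)$. To respect the normalization $\param=[-1,1]^{d-1}$ while keeping the effective drift and all relevant inflow/outflow balls inside the region $\tfrac12[-1,1]^{d-1}$ required by Lemma~\ref{lemma:rdom:one-to-one}, I rescale by a fixed small factor (replacing $\mu$ by $c\mu$ and the Euclidean balls $B_h$ by $B_{ch}$), which only affects constants, and I restrict to small $h$, which suffices since the assertion is asymptotic as $h\to 0$. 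Finally, $b_\mu$ from Lemma~\ref{lemma:rdom:characteristics} is defined only on $\ran(\rdom_\mu)$, so I extend it to all of $\Omega$ by a smooth cutoff together with a standard extension of controlled $C^{\sb}$-norm, arranged so that $\ran(\rdom_\mu)$ still contains a neighbourhood of $\supp(g_+)\subset\bo[,h]$ on which the transport equation is uniquely solvable, so that Assumption~\ref{assumption:pde-solution} holds.

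With this $b_\mu$, Lemma~\ref{lemma:rdom:inflow-to-outflow} gives $\cf_\mu=\xi_{+,\mu}\circ\xi_{-,\mu}^{-1}$ and $\cb_\mu=\xi_{-,\mu}\circ\xi_{+,\mu}^{-1}$, which are mutually inverse $C^{\sb}$-diffeomorphisms between the relevant subsets of $\Gamma_-$ and $\Gamma_+$; Assumption~\ref{assumption:backward-forward} is then immediate from $\cf_\mu\circ\cb_\mu=\operatorname{Id}$ and $\cb_\mu\circ\cf_\mu=\operatorname{Id}$ on those subsets. For Assumption~\ref{assumption:smooth}, the composition operator $g_+\mapsto g_+\circ\cb_\mu$ is bounded on $W^{\sm,p}$ by the standard theory of composition with $C^{\sb}$-diffeomorphisms, and the bound is uniform in $\mu$ because $\cb_\mu$ differs from $\cb_0$ only by a translation, which acts isometrically on Sobolev norms while its own norms are controlled through Assumption~\ref{assumption:rdom:reference-domain}.

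For Assumptions~\ref{assumption:backward-sets} and \ref{assumption:forward-backward-sets} I would take $\{\mu_i\}_{i=1}^n$ to be a maximal $Ch$-separated subset of $\param$ for a fixed large constant $C$; it has cardinality $n\sim h^{-(d-1)}$. By Lemma~\ref{lemma:rdom:forward-backward} (after the rescaling above), $\cb_{\mu_i}(\bo[,h])$ is the image under $\rdom(0,\cdot)$ of the translate $B_{ch}+c\mu_i$, so these sets are pairwise disjoint since $\rdom$ is injective and the translates are disjoint; similarly $M_i$ is a ball of radius $\sim h$ centred at $\mu_i$, so the $M_i$ are pairwise disjoint as well. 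This yields $\diml=d-1$, as required.

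The main obstacle is the uniform bound $\|b\|_{C^{\sb}(\Omega\times\param)}\lesssim 1$. By the chain rule, on $\ran(\rdom_\mu)$ the composition $b_\mu\circ\rdom_\mu$ is an explicit, affine-in-$\mu$ combination of first-order derivatives of $\rdom$ evaluated at $(t,w+(1-t)\mu)$, so that $\mu$-derivatives of $b_\mu\circ\rdom_\mu$ reduce to derivatives of $\rdom$ in its second argument; one then composes with $\rdom_\mu^{-1}$ and invokes the $C^{\sb}$ bounds on $\rdom$ and $\rdom^{-1}$ from Assumption~\ref{assumption:rdom:reference-domain}, and finally checks that the cutoff and extension do not inflate the norm. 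Tracking the smoothness exponents carefully here is the only genuinely delicate point; everything else is bookkeeping with the formulas already established in Lemmas~\ref{lemma:rdom:one-to-one}--\ref{lemma:rdom:forward-backward}.
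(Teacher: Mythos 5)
Your proposal is correct and follows essentially the same route as the paper: define $b_\mu$ via Lemma \ref{lemma:rdom:characteristics} from the parametric reference map, extend it off $\ran(\rdom_\mu)$ with controlled $C^{\sb}$-norm, pick $\sim h^{-(d-1)}$ parameters separated at scale $\sim h$, and verify Assumptions \ref{assumption:backward-forward}--\ref{assumption:smooth} through Lemmas \ref{lemma:rdom:inflow-to-outflow} and \ref{lemma:rdom:forward-backward} plus the chain rule. Your only deviations (rescaling $\mu$ and $B_h$ by a fixed constant instead of placing the grid inside $\tfrac12[-1,1]^{d-1}$, and using a maximal separated set rather than a uniform grid) are cosmetic and affect only constants.
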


\begin{proof}

\begin{enumerate}

  \item \emph{Definition of the flow field:} We define $b_\mu$ by Lemma \ref{lemma:rdom:characteristics}. By construction, $b_\mu$ is only defined on the image of $\frac{1}{2} [-1,1] \times \frac{1}{2} [-1,1]^{d-1} \times \frac{1}{2}[-1,1]^{d-1}$ (for $t$, $w$ and $\mu$) of the map $\rdom_\mu$. Since this domain is Lipschitz and $b$ bounded in $C^{\sb}$ by construction and the chain rule, we can extend it to the full space-parameter domain. Nonetheless, in the following, we only use its definition on the subdomain of Lemma \ref{lemma:rdom:characteristics}.

  \item \emph{Definition of $\mu_i$:} We choose $\mu_i$, $i=1, \dots, n$ on a uniform Cartesian grid, contained in $\frac{1}{2}[-1,1]^{d-1}$ with distances $|\mu_i - \mu_j| \ge 5h$ and $n \sim h^{-(d-1)}$ elements.

  \item \emph{Assumption \ref{assumption:backward-forward}:} Follows directly from Lemma \ref{lemma:rdom:inflow-to-outflow}.

  \item \emph{Assumption \ref{assumption:backward-sets}:} For centered Euclidean ball $B_h$ and $\bo[,h] = \xi_{+,\mu}(B_h)$ by Lemma \ref{lemma:rdom:inflow-to-outflow}, we have
  \begin{equation*}
    \cb_{\mu_i}(\bo[,h])
    = \cb_{\mu_i} \circ \xi_{+,\mu} (B_h)
    = \xi_{-,\mu_i} \circ \xi_{+,\mu_i}^{-1} \circ \xi_{+,\mu} (B_h)
    = \xi_{-,\mu_i} (B_h),
  \end{equation*}
  where in the last step we have used that $\xi_{+,\mu} = \rdom_\mu(1,\cdot) = \rdom(1, \cdot)$ is independent of $\mu$. All $\xi_{-,\mu_i}(B_h) = \rdom_{\mu_i}(0, B_h) = \rdom(0,B_h + \mu_i)$ are disjoint because $\rdom$ is one-to-one and all $B_h + \mu_i$ are disjoint by the spacing of the $\mu_i$ grid points.
  
  \item \emph{Assumption \ref{assumption:forward-backward-sets}:} By Lemma \ref{lemma:rdom:forward-backward}, for any $\mu$ we have 
  \begin{align*}
    \cf_\mu \circ \cb_{\mu_i}(\bo[,h]) \cap \bo[,h] & \ne \emptyset &
    & \Leftrightarrow &
    B_h + \mu_i \cap B_h + \mu & \ne \emptyset.
  \end{align*}
  This intersection is non-empty only if $|\mu - \mu_i| \le 2h$. Hence the sets $M_i = \{\mu \in \param : \, \cf_\mu \circ \cb_{\mu_i}(\bo[,h]) \cap \bo[,h] \ne \emptyset \}$ are contained in the balls $B_{2h}(\mu_i)$ and therefore disjoint because the $\mu_i$ are spaced with distance at least $5h$.

  \item \emph{Assumption \ref{assumption:smooth}:} Follows from the smoothness of $\rdom$, Lemma \ref{lemma:rdom:inflow-to-outflow} and the chain rule.

\end{enumerate}

\end{proof}

The second main result of this section provides flow fields for Proposition \ref{prop:transport-entropy-var}.

\begin{lemma} \label{lemma:rdom:assumptions-extension}
  
  Assume there is a map $\rdom$ as defined in Assumption \ref{assumption:rdom:reference-domain}. Let $h\ge 0$. Then, there is a parametric flow field $b_\mu$ with $\|b\|_{C^{\sb}(\Omega \times \param)} \lesssim 1$ for $\param = \parama \times \paramb = [-1,1]^{d-2} \times [-1,1]$ and parameters $\mu_i = (\mua_i, 5h)$, $i=1, \dots, n$ with $n \sim h^{-(d-2)}$ such that Assumptions \ref{assumption:backward-forward}, \ref{assumption:backward-sets}, \ref{assumption:forward-backward-sets}, \ref{assumption:smooth} and \ref{assumption:extra-dim-independent} are satisfied with $\diml = (d-2)$.

\end{lemma}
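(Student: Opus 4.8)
The plan is to reuse the parametric reference map and characteristic flow of Lemmas~\ref{lemma:rdom:one-to-one}--\ref{lemma:rdom:forward-backward} with the $(d-1)$-dimensional parameter $(\mua,\eta)$ playing the role of the single $\mu$ there, then to verify Assumptions~\ref{assumption:backward-forward}, \ref{assumption:backward-sets}, \ref{assumption:forward-backward-sets}, \ref{assumption:smooth} on the frozen slice $\eta=\delta=5h$ just as in Lemma~\ref{lemma:rdom:assumptions} (with $d-1$ replaced by $d-2$), and finally to read off the new Assumption~\ref{assumption:extra-dim-independent} from the spare coordinate $\eta$. Concretely, assuming $h$ small so that $\delta=5h\le\tfrac12$, I would set $\rdom_{(\mua,\eta)}(t,w):=\rdom\big(t,(1-t)(w+(\mua,\eta))+tw\big)$ as in Lemma~\ref{lemma:rdom:one-to-one} with $(\mua,\eta)\in\tfrac12[-1,1]^{d-1}$, and take $b_{(\mua,\eta)}$ and its characteristics $X_{(\mua,\eta)}$, $\cf_{(\mua,\eta)}=\xi_{+,(\mua,\eta)}\circ\xi_{-,(\mua,\eta)}^{-1}$, $\cb_{(\mua,\eta)}=\xi_{-,(\mua,\eta)}\circ\xi_{+,(\mua,\eta)}^{-1}$ from Lemmas~\ref{lemma:rdom:characteristics} and~\ref{lemma:rdom:inflow-to-outflow}. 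Exactly as in Lemma~\ref{lemma:rdom:assumptions}, $b$ is so far defined only on $\ran(\rdom_{(\mua,\eta)})$, but the $C^{\sb}$ bounds on $\rdom,\rdom^{-1}$ and the chain rule give $\|b\|_{C^{\sb}}\lesssim 1$ there (the constant absorbed into $C_b$), and one extends $b$ to all of $\Omega\times\param$, $\param=[-1,1]^{d-2}\times[-1,1]$, keeping the bound. I would then pick $\mua_1,\dots,\mua_n$ on a Cartesian grid in $\tfrac12[-1,1]^{d-2}$ with spacing $\ge 5h$, so $n\sim h^{-(d-2)}$, and set $\mu_i:=(\mua_i,\delta)$.

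For the first four assumptions, applied to the frozen field $b_{(\mua,\delta)}$ with $\mua\in\parama$, the proof of Lemma~\ref{lemma:rdom:assumptions} goes through verbatim after identifying $\mua$ with a $(d-2)$-dimensional shift. Assumption~\ref{assumption:backward-forward} and Assumption~\ref{assumption:smooth} follow from Lemma~\ref{lemma:rdom:inflow-to-outflow} and the chain rule. For Assumption~\ref{assumption:backward-sets}, Lemma~\ref{lemma:rdom:inflow-to-outflow} gives $\cb_{(\mua_i,\delta)}(\bo[,h])=\xi_{-,(\mua_i,\delta)}(B_h)=\rdom(0,B_h+(\mua_i,\delta))$, and these are pairwise disjoint because $\rdom$ is one-to-one and the translates $B_h+(\mua_i,\delta)$ agree in the last coordinate while differing by at least $5h>2h$ in the first $d-2$. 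For Assumption~\ref{assumption:forward-backward-sets}, Lemma~\ref{lemma:rdom:forward-backward} with $\mu=(\mua,\delta)$ and $\mu_i=(\mua_i,\delta)$ reduces $\cf_{(\mua,\delta)}\circ\cb_{(\mua_i,\delta)}(\bo[,h])\cap\bo[,h]\ne\emptyset$ to $(B_h+(\mua_i,\delta))\cap(B_h+(\mua,\delta))\ne\emptyset$, i.e.\ (the last coordinates coinciding) to $|\mua-\mua_i|\le 2h$; hence $M_i\subset B_{2h}(\mua_i)$, the $M_i$ are pairwise disjoint by the $5h$ spacing, and $n\sim h^{-(d-2)}$ as needed, giving $\diml=d-2$.

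The one genuinely new point is Assumption~\ref{assumption:extra-dim-independent}, and it is again immediate from Lemma~\ref{lemma:rdom:forward-backward}, this time applied with $\mu=(\mua,0)$ and $\mu_i=(\mua_i,\delta)$: the set in that assumption is nonempty iff $(B_h+(\mua_i,\delta))\cap(B_h+(\mua,0))\ne\emptyset$ for some $\mua\in\parama$. But the last coordinates of these two radius-$h$ Euclidean balls in $\real^{d-1}$ are centred at $\delta=5h$ and at $0$, so they occupy the disjoint intervals $[4h,6h]$ and $[-h,h]$; the intersection is therefore empty for every $\mua$, and the set in Assumption~\ref{assumption:extra-dim-independent} is empty. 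This $5h$-versus-$2h$ margin is exactly what forces $\mua$ to be one dimension smaller than in Lemma~\ref{lemma:rdom:assumptions}; I expect no real obstacle here, the only mild bookkeeping being the rescaling of the first step that keeps all shifted balls $B_h+\mu$ (and the intermediate points of the backward--forward compositions) inside the valid domain $\tfrac12[-1,1]^{d-1}$ of $\rdom_\mu$, after which the argument is a transcription of Lemma~\ref{lemma:rdom:assumptions} with $\diml=d-2$.
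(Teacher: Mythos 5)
Your proposal is correct and follows essentially the same route as the paper: choose $\mu_i=(\mua_i,5h)$ on a $5h$-spaced grid, verify Assumptions \ref{assumption:backward-forward}--\ref{assumption:smooth} exactly as in Lemma \ref{lemma:rdom:assumptions} on the frozen slice $\eta=5h$, and obtain Assumption \ref{assumption:extra-dim-independent} from Lemma \ref{lemma:rdom:forward-backward}, since the translated balls $B_h+(\mua_i,5h)$ and $B_h+(\mua,0)$ are separated in the last coordinate. Your added remarks on the extension of $b$ and the smallness of $h$ are sensible bookkeeping details consistent with the paper's argument.
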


\begin{proof}

The parameters are split into two components $(\mua, \eta)$, with $\mua \in \real^{d-2}$ and $\eta \in \real$. We fix the last axis and choose parameters $\mu_i = (\mua_i, 5h)$, $i=1, \dots n$ on a uniform Cartesian grid with mesh size $5h$ and $n \sim h^{-(d-2)}$. Then Assumptions \ref{assumption:backward-forward}, \ref{assumption:backward-sets}, \ref{assumption:forward-backward-sets} and \ref{assumption:smooth} are shown identical to Lemma \ref{lemma:rdom:assumptions}, which only relies on the spacing between the parameters, unchanged in our case.

For the remaining Assumption \ref{assumption:extra-dim-independent}, by Lemma \ref{lemma:rdom:forward-backward}, we have
\begin{align*}
  \cf_{\mua,0} \circ \cb_{(\mua_i,5h)}(\bo[,h]) \cap \bo[,h] & \ne \emptyset & 
  & \Leftrightarrow &
  B_h + (\mua_i,5h) \cap B_h + (\mua,0) & \ne \emptyset, \\
\end{align*}
The last parameter component, $0$ versus $5h$, ensures that the intersection is indeed always zero, which shows Assumption \ref{assumption:extra-dim-independent}.
  
\end{proof}

\subsection{Proof of Main Result}
\label{sec:transport:proof-main}

\begin{proof}[Proof of Theorem \ref{th:transport-entropy}]
  The theorem follows directly from Proposition \ref{prop:transport-entropy} with flow field constructed in Lemma \ref{lemma:rdom:assumptions}.
\end{proof}

\begin{proof}[Proof of Theorem \ref{th:transport-entropy-var}]
  The theorem follows directly from Proposition \ref{prop:transport-entropy-var} with flow field constructed in Lemma \ref{lemma:rdom:assumptions-extension}.
\end{proof}

\subsection{Upper Bounds}
\label{sec:transport:upper-bounds}

In this section, we consider some upper bounds for the entropy by constructing an approximation of the quantities of interest $\qi(\mu)$. To this end, recall that in simple cases (Section \ref{sec:transport-simple}) the quantity of interest is a convolution of $g_- \in W^{\sm,p}$ and $g_+ \in W^{\sp,\infty}$. Therefore, we expect smoothness $\sm+\sp$ for $\qi(\mu)$, which directly implies approximation properties. We show that similar observations are true for more general cases as well.

As usual, we consider the parametric PDE
\begin{align*}
  \qi(\mu) & = \int_{\Gamma_+} g_+ u_\mu, &
  b_\mu \nabla u_\mu & = 0, &
  u_\mu|_{\Gamma_{-}} & = g_-
\end{align*}

\begin{lemma} \label{lemma:transport:upper}

  Let $\cf_\mu$ and $\cb_\mu$ be the characteristic maps between inflow and outflow boundary \eqref{eq:inflow-to-outflow} and assume they are well-defined. Assume all derivatives below are continuous and bounded by
  \begin{align*}
    \|g_-\|_{C^{\sm}(\Gamma_-)} & \lesssim 1,
    \\
    \|g_+\|_{C^{\sp}(\Gamma_+)} & \lesssim 1
    \\
    | \partial_x^\delta \partial_\mu^\eta \cf_\mu(x) | & \lesssim 1, &
    |\delta| & \le 1, & 
    |\eta| & \le \sp,
    \\
    | \partial_x^\delta \partial_\mu^\eta \cb_\mu(y) | & \lesssim 1, &
    |\delta| & \le \sm, & 
    |\eta| & \le \sp,
  \end{align*}
  for all $x \in \Gamma_-$, $y \in \Gamma_+$ and $\mu \in \param$. Furthermore, assume that $\det D \cf_\mu$ is uniformly positive or negative. Then
  \begin{equation*}
    \|\qi\|_{C^{\sm+\sp}(\param)} \lesssim 1.
  \end{equation*}
  
\end{lemma}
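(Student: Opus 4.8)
The plan is to reduce the quantity of interest to an integral that exhibits the claimed $C^{\sm+\sp}$ smoothness by pulling everything back to a parameter-independent reference configuration via the characteristic maps. By the method of characteristics, the solution on the outflow boundary is $u_\mu|_{\Gamma_+} = g_- \circ \cb_\mu$, so that
\[
  \qi(\mu) = \int_{\Gamma_+} g_+(y)\, g_-(\cb_\mu(y)) \, dy.
\]
First I would change variables in this integral. Since $\det D\cf_\mu$ is uniformly of one sign, $\cf_\mu$ is a diffeomorphism from (part of) $\Gamma_-$ onto (part of) $\Gamma_+$ with inverse $\cb_\mu$; substituting $y = \cf_\mu(x)$ gives
\[
  \qi(\mu) = \int_{\Gamma_-} g_+(\cf_\mu(x))\, g_-(x)\, |\det D_x \cf_\mu(x)| \, dx.
\]
This is the key structural step: the $g_-$ factor now carries no $\mu$-dependence at all, and all $\mu$-dependence sits in the smooth factors $g_+ \circ \cf_\mu$ and $|\det D_x \cf_\mu|$.

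Next I would differentiate under the integral sign with respect to $\mu$. Each $\mu$-derivative of order $|\eta| \le \sm+\sp$ hits some combination of $g_+ \circ \cf_\mu$ and $\det D_x \cf_\mu$. By the chain and product rules, a derivative $\partial_\mu^\eta \big[ g_+(\cf_\mu(x)) \big]$ expands via Faà di Bruno into a sum of terms of the form $(\partial_x^\beta g_+)(\cf_\mu(x))$ times products of $\partial_x^{\delta}\partial_\mu^{\eta'}\cf_\mu(x)$ with $|\beta| \le |\eta|$; here we may need up to $\sp$ spatial derivatives of $g_+$, which is exactly what $\|g_+\|_{C^{\sp}} \lesssim 1$ provides, and the mixed derivatives of $\cf_\mu$ with $|\delta| \le 1$, $|\eta'| \le \sp$ are bounded by hypothesis. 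Similarly $\partial_\mu^\eta |\det D_x \cf_\mu|$ is controlled because the determinant is a polynomial in the entries $\partial_{x_j}(\cf_\mu)_i$ and stays bounded away from zero (uniform sign), so its absolute value is smooth and its $\mu$-derivatives involve $\partial_x^\delta \partial_\mu^{\eta'} \cf_\mu$ with $|\delta| \le 1$. That handles $\sp$ of the available derivatives; the remaining (up to $\sm$) derivatives must be shifted onto $g_-$ via integration by parts in $x$: each such transfer produces a spatial derivative of $g_-$ (we have $\sm$ of them from $\|g_-\|_{C^{\sm}} \lesssim 1$) together with extra $\partial_x$ derivatives landing on the smooth factors, where now the hypotheses $|\partial_x^\delta\partial_\mu^\eta \cf_\mu| \lesssim 1$ for $|\delta| \le 1$ together with $|\partial_x^\delta \partial_\mu^\eta \cb_\mu| \lesssim 1$ for $|\delta| \le \sm$ (needed to control repeated spatial differentiation of $g_+ \circ \cf_\mu$ through $\cf_\mu = \cb_\mu^{-1}$, or equivalently to bound higher $x$-derivatives of $\cf_\mu$) come into play. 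Putting the bounds together, every term is a bounded continuous function times $g_-$ or a spatial derivative of $g_-$, integrated over the bounded boundary $\Gamma_-$, hence bounded by a constant uniformly in $\mu$; continuity in $\mu$ follows from dominated convergence.

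The main obstacle I anticipate is the careful bookkeeping of the mixed derivatives: one has $\sm+\sp$ total $\mu$-derivatives to distribute, and one must argue that at most $\sp$ of them can be absorbed by $g_+$ and the Jacobian (because only $\sp$ mixed $\mu$-derivatives of $\cf_\mu$ are assumed, and only $\sp$ spatial derivatives of $g_+$), while the excess is moved onto $g_-$ by integration by parts at the cost of at most $\sm$ spatial derivatives on $g_-$ — and that these two transfers are compatible, i.e. the integration by parts never demands more than $|\delta| \le 1$ extra $x$-derivatives on $\partial_\mu^{\eta}\cf_\mu$ beyond what is available, or that the $|\delta| \le \sm$ bound on $\cb_\mu$ supplies whatever higher spatial regularity of $\cf_\mu$ is implicitly needed. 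Verifying the asymmetric derivative budgets match the asymmetric hypotheses (first index $\le 1$ vs.\ $\le \sm$ for $\cf_\mu$ vs.\ $\cb_\mu$) is the delicate part; the rest is a routine Faà di Bruno / Leibniz expansion plus dominated convergence.
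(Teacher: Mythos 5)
Your starting point (the characteristic solution $u_\mu|_{\Gamma_+} = g_-\circ\cb_\mu$, the change of variables $y=\cf_\mu(x)$, and Fa\`a di Bruno bookkeeping) is the same as the paper's, but the order in which you substitute creates a genuine gap at the decisive step. Once you pull back first, the integrand is $g_+(\cf_\mu(x))\,g_-(x)\,|\det D_x\cf_\mu(x)|$, so \emph{all} $\sm+\sp$ parameter derivatives fall on $g_+\circ\cf_\mu$ and on the Jacobian factor; with only $\|g_+\|_{C^{\sp}}\lesssim 1$ and $|\partial_x^\delta\partial_\mu^\eta\cf_\mu|\lesssim 1$ for $|\delta|\le 1$, $|\eta|\le\sp$, you can absorb at most $\sp$ of them. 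Your fix --- ``shift the remaining (up to $\sm$) derivatives onto $g_-$ via integration by parts in $x$'' --- does not work as stated, because these are $\mu$-derivatives, not $x$-derivatives: after differentiating in $\mu$ there is no spatial derivative in the expression available to move onto $g_-$. To make the idea rigorous you would have to rewrite $\partial_\mu\bigl(g_+\circ\cf_\mu\bigr)=(\partial_\mu\cf_\mu)^T (D_x\cf_\mu)^{-T}\nabla_x\bigl(g_+\circ\cf_\mu\bigr)$ and integrate by parts repeatedly, which forces you to differentiate the coefficients $\partial_\mu\cf_\mu$, $(D_x\cf_\mu)^{-1}$ and $\det D_x\cf_\mu$ up to $\sm$ times in $x$; that requires mixed bounds $|\partial_x^\delta\partial_\mu^\eta\cf_\mu|\lesssim 1$ with $|\delta|$ of order $\sm$ (not just $\le 1$), quantitative control of the inverse Jacobian, and a treatment of the boundary terms on $\partial\Gamma_-$ and of the $\mu$-dependent integration domain --- none of which is among the stated hypotheses, and recovering it from the $\cb_\mu$ bounds via the inverse function theorem is neither automatic nor carried out in your sketch.

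The paper avoids this entirely by distributing the derivatives \emph{before} substituting: it splits $\alpha=\beta+\gamma$ with $|\beta|\le\sm$, $|\gamma|\le\sp$, applies $\partial^\beta_\mu$ to $g_-\circ\cb_\mu$ in the original integral (Fa\`a di Bruno then places up to $\sm$ derivatives on $g_-$ and $\mu$-derivatives on $\cb_\mu$), and only then changes variables so that $g_-^{(|\pi|)}$ becomes parameter-independent; the remaining $\partial^\gamma_\mu$ acts on $g_+\circ\cf_\mu$, $(\partial^\delta\cb_\mu)\circ\cf_\mu$ and $\det D\cf_\mu$, which is precisely what the asymmetric hypotheses on $\cf_\mu$ and $\cb_\mu$ are designed to control --- no integration by parts, no differentiation of the inverse Jacobian, no boundary terms. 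As written, your proposal is missing the mechanism by which $g_-$ absorbs its share of the derivatives, and the derivative budget you yourself flag as ``the delicate part'' does not close under the assumptions of the lemma.
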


\begin{proof}

By the method of characteristics, the solution of the PDE is given by $g_- \circ \cb_\mu$. For multi-index $\alpha$ with $|\alpha| \le \sm+\sp$, we denote the partial derivatives with respect to $\mu$ by $\partial^\alpha$. We split the multi-index into two arbitrary components $\beta$ and $\gamma$ so that $\alpha = \beta + \gamma$ and $|\beta| \le \sm$ and $|\gamma| \le \sp$. Since $g_+$ is independent of $\mu$, we have
\begin{align*}
  \mathcal{D} & := \partial^\alpha \qi(\mu) = \partial^\alpha \int_{\Gamma_+} g_+ (g_- \circ \cb_\mu)
  = \partial^\gamma \int_{\Gamma_+} g_+  \partial^\beta( g_- \circ \cb_\mu )
  \\
  & = \partial^\gamma \int_{\Gamma_+} g_+  \sum_{\pi \in \Pi} (g_-^{(|\pi|)} \circ \cb_\mu) \prod_{\delta \in \pi} \partial^\delta \cb_\mu,
\end{align*}
where in the last line we have used Fa\`{a} di Bruno's formula, i.e. a repeated chain rule for higher derivatives, and the sum runs through all partitions $\Pi$ of $\{0, \dots, |\beta|\}$ and $\delta \in \pi$ runs through all subsets in the partition $\pi$. We place the remaining $\partial^\gamma$ derivatives on $g_+$ instead of $g_-$. To this end, we first apply the substitution rule
\begin{align*}
  \mathcal{D}
  & = \partial^\gamma \int_{\Gamma_+} g_+ \circ \cf_\mu  \sum_{\pi \in \Pi} g_-^{(|\pi|)} \prod_{\delta \in \pi} (\partial^\delta \cb_\mu) \circ \cf_\mu |\det D \cf_\mu| 
  \\
  & = \sum_{\pi \in \Pi} \int_{\Gamma_+} g_-^{(|\pi|)} \partial^\gamma \left(g_+ \circ \cf_\mu  \prod_{\delta \in \pi} (\partial^\delta \cb_\mu) \circ \cf_\mu |\det D \cf_\mu| \right),
\end{align*}
where in the last line we have used that after substitution $g_-^{(|\pi|)}$ does not depend on $\mu$ and $\cf_\mu = \cb_\mu^{-1}$. By product and chain rules, expanding the $\partial^\gamma$ derivative of the parenthesis yields terms
\begin{align*}
  & g_+^{(|\eta|)}, &
  & \partial^\eta \left[ (\partial^\delta \cb_\mu) \circ \cf_\mu \right]
  & \partial^\eta (\det D \cf_\mu)
\end{align*}
for any $\eta \le \gamma$ (component-wise). For the last term we omitted the absolute value because by assumption it does not change sign. Note that the middle term requires $x$ and $\mu$ derivatives of $\cb_\mu$, whereas we only need $\mu$ derivatives of $\cf_\mu$ and $D \cf_\mu = D_x \cf_\mu$. By assumption, all these derivatives are well-defined and bounded in the maximum norm, which directly yields the lemma.

\end{proof}

\begin{corollary} \label{cor:transport-upper}

  For parameter set $\param \subset \real^D$, let $\instances$ be a class of transport problems that satisfy the assumptions of Lemma \ref{lemma:transport:upper}. Then the Lipschitz width is bounded by
  \begin{equation*}
    \mwidth^\gamma_n(\instances) \le n^{-\frac{\sm+\sp}{D}}.
  \end{equation*}
  
\end{corollary}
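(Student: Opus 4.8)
The plan is to combine the regularity bound of Lemma~\ref{lemma:transport:upper} with a stable linear approximation operator and build an explicit, admissible encoder/decoder pair. By Lemma~\ref{lemma:transport:upper}, the entire family of quantities of interest $K=\Qi(\instances)$ lies in a fixed ball of $C^{\sm+\sp}(\param)$, where $\param\subset\real^D$ is taken bounded (e.g.\ a box). First I would invoke classical approximation theory: for each $n$ there is an $n$-dimensional space $V_n=\linspan\{\phi_1,\dots,\phi_n\}\subset L_\infty(\param)$ — for instance piecewise polynomials of sufficiently high order on a uniform grid with $\sim n^{1/D}$ nodes per coordinate — together with a linear quasi-interpolation operator $P_n\colon C^{\sm+\sp}(\param)\to V_n$ that is $L_\infty$-stable, $\|P_n\|\lesssim1$ uniformly in $n$, and satisfies
\[
  \|f-P_nf\|_{L_\infty(\param)}\lesssim n^{-\frac{\sm+\sp}{D}}\,\|f\|_{C^{\sm+\sp}(\param)}.
\]
Since $Z=L_\infty(\param)$, Lemma~\ref{lemma:transport:upper} and this display give $\|\Qi(P)-P_n\Qi(P)\|_Z\lesssim n^{-(\sm+\sp)/D}$ uniformly over $P\in\instances$. (Alternatively, the $C^{\sm+\sp}$ bound yields $\entropy_n(K)\lesssim n^{-(\sm+\sp)/D}$, and Theorem~\ref{th:entropy-lipwidth} then produces the same rate for the Lipschitz width $\mwidth_n^{\gamma,1}$; the direct construction below is what controls the stable manifold width.)

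Next I would turn $P_n$ into an admissible reduced order model for the stable manifold width. Let $c\colon V_n\to\real^n$, $\sum_i a_i\phi_i\mapsto(a_i)_i$, be the coordinate isomorphism, and equip $Y=\real^n$ with the norm $\|a\|_Y:=t\,\|\sum_i a_i\phi_i\|_{L_\infty(\param)}$ for a scaling $t>0$ to be fixed. Define $\offfunc_n:=c\circ P_n\circ\Qi\colon\instances\to\real^n$ and $\onfunc_n:=c^{-1}\colon\real^n\to V_n\subset Z$, so that $\onfunc_n\circ\offfunc_n=P_n\circ\Qi$ and the worst-case error is $\lesssim n^{-(\sm+\sp)/D}$ by the previous paragraph. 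The decoder $\onfunc_n$ is $1/t$-Lipschitz by the choice of $\|\cdot\|_Y$, while $\offfunc_n$ is $\bigl(t\,\|P_n\|\,\mathrm{Lip}(\Qi)\bigr)$-Lipschitz, where $\mathrm{Lip}(\Qi)$ is the Lipschitz constant of $\Qi\colon\instances\to Z$. Choosing $t:=\bigl(\|P_n\|\,\mathrm{Lip}(\Qi)\bigr)^{-1/2}$ balances the two, making both maps $\gamma$-Lipschitz with $\gamma\sim\bigl(\|P_n\|\,\mathrm{Lip}(\Qi)\bigr)^{1/2}$, a constant independent of $n$. Plugging $(\offfunc_n,\onfunc_n,Y)$ into the definition of $\mwidth_n^\gamma(\instances)$ then yields $\mwidth_n^\gamma(\instances)\lesssim n^{-(\sm+\sp)/D}$ for this $\gamma$, and for all larger $\gamma$ by monotonicity of $\mwidth_n^\gamma$ in $\gamma$.

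The step I expect to be the real obstacle is the Lipschitz continuity of $\Qi\colon\instances\to L_\infty(\param)$ used above. If the problem instance only varies $g_-$ and $g_+$ for a fixed flow field, this is routine: by the method of characteristics $\qi(\mu)=\int_{\Gamma_+}g_+\,(g_-\circ\cb_\mu)$ is bilinear in $(g_-,g_+)$, and on the bounded class $\instances$ one bounds $\sup_\mu|\qi(\mu)-\bar\qi(\mu)|\lesssim\|g_+-\bar g_+\|+\|g_--\bar g_-\|$, using that $\cb_\mu$ is a fixed, uniformly well-behaved change of variables. When the flow field $b$ is also allowed to vary over $\instances$, one instead uses the uniform bounds on the derivatives of $\cf_\mu,\cb_\mu$ together with the sign condition on $\det D\cf_\mu$ assumed in Lemma~\ref{lemma:transport:upper}, which give Lipschitz dependence of the characteristic maps, hence of $\qi$, on the data; I expect this to be the only genuinely technical point, the rest being bookkeeping and the cited approximation estimate.
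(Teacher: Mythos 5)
Your proposal is correct and follows essentially the same route as the paper: the paper's proof is a one-line appeal to the $C^{\sm+\sp}(\param)$ bound of Lemma \ref{lemma:transport:upper} combined with approximation by a standard linear method (spline/finite element/wavelet), which is exactly your quasi-interpolation construction. The extra material you supply — packaging $P_n$ as an encoder/decoder pair with a rescaled norm on $\real^n$ and checking Lipschitz continuity of $\Qi$ on $\instances$ — is bookkeeping the paper leaves implicit rather than a different argument, and it is a reasonable (indeed more careful) way to justify the stable-width reading of the statement.
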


\begin{proof}
  
Follows from the smoothness in Lemma \ref{lemma:transport:upper} and approximation of the quantity of interest by a standard method, e.g. spline, finite element or wavelet.
  
\end{proof}

\begin{remark}
  Since the lower bounds in Theorem \ref{th:transport-entropy-var} or Corollary \ref{cor:transport-entropy-var} and the upper bounds in Corollary \ref{cor:transport-upper} do not match, the results are not sharp. Nonetheless, both bounds share relevant qualitatively properties: They depend on the combined smoothness $\sm+\sp$ of the inflow boundary condition and outflow quantity of interest and are parameter dimension dependent.
\end{remark}

\section{Implications for Transform Based Methods}
\label{sec:implications-for-transform-methods}

Classical reduced order models, like the reduced basis method or POD, represent solutions by separation of variables, or tensor product approximation,
\begin{equation*}
  u_\mu(x) = \sum_{i=1}^n c_i(\mu) \psi_i(x),
\end{equation*}
with reduced basis functions $\psi_i(x)$ and parameter dependent coefficients $c_i(\mu)$.  Recent approaches 
\cite{
  Welper2017,
  ReissSchulzeSesterhenn2015,
  CagniartMadayStamm2019,
  CagniartCrisovanMadayEtAl2017,
  NairBalajewicz2017,
  Welper2019,
  KrahSrokaReiss2019,
  Taddei2020%
}
for parametric transport dominated problems add an inner parameter dependent transform $\varphi_\mu$
\begin{equation*}
  u_\mu(x) = \sum_{i=1}^n c_i(\mu) \psi_i \circ \varphi_\mu(x)
\end{equation*}
to track the transport of sharp gradients or shocks. While these can achieve excellent approximation rates for the parametric solution $u_\mu$, a full offline/online decomposition is not clear, in general. Indeed, if we apply a linear functional, or quantity of interest, $\ell(\cdot) = \int g \cdot$ in the classical case we obtain
\begin{equation*}
  \ell(u_\mu) 
  = \sum_{i=1}^n c_i(\mu) \ell(\psi_i) 
  =: \sum_{i=1}^n c_i(\mu) \int \psi_i \, g
\end{equation*}
and we only need to retain the parameter independent numbers $\ell(\psi_i)$ for the online phase. With the additional inner transform, however, these quantities are parameter dependent
\begin{equation*}
  \ell(u_\mu) 
  =: \sum_{i=1}^n c_i(\mu) \int (\psi_i \circ \varphi_\mu) \, g,
\end{equation*}
or after integral substitution
\begin{equation*}
  \ell(u_\mu) = \sum_{i=1}^n c_i(\mu) \int \psi_i \, \left(g \circ \varphi_\mu^{-1}\right) \, | \det D \varphi_\mu^{-1}|.
\end{equation*}
This leaves us with the task to find a reduced approximation of either $\psi_i \circ \varphi_\mu$ or $\left(g \circ \varphi_\mu^{-1}\right) \, | \det D \varphi_\mu^{-1}|$. Since $\psi_i$ is typically composed of snapshots, the former variant requires the PDE solutions to be smooth, which for the transport problems in Section \ref{sec:transport} translates to smooth inflow boundary conditions. The latter variant requires some regularity of the quantity of interest $g$.

Corollary \ref{cor:transport-entropy-var} suggests that this issue is not only of technical nature but a fundamental one: Any stable high order reduced order model with full online/offline decomposition (applied to the transport problems in this paper) requires either smooth inflow $g_-$ and therefore smooth solutions, or smooth quantity of interest $g = g_+$.

\section{Conclusion}

In order to find benchmarks for nonlinear reduced order models, it is tempting to generalize the Kolmogorov $n$-width of the solution manifold to nonlinear width. We have seen that this approach has several difficulties: It is unclear if optimal encoder/decoder pairs allow an online/offline decomposition and the widths tend to be zero for more degrees of freedom than parameter dimension.

These problems are avoided by a shift in perspective: Instead of applying the nonlinear width to the solution manifold, the offline and online phases themselves can be understood as a encoder/decoder pair to which we can apply manifold, stable and Lipschitz width. 

With the new approach, we show that reduced order models for linear transport, with differentiable inflow boundary condition, flow field and quantity of interest, are necessarily subject to the curse of dimensionality and achieve high order only if either the inflow boundary condition or outflow quantity of interest are smooth.

\section{Acknowledgements}

We are grateful for helpful discussions on early results for the paper with Peter Binev, Andrea Bonito, Ronald DeVore, Guergana Petrova and Bojan Popov.

\bibliographystyle{abbrv}
\bibliography{entropy}

\end{document}